\newif\ifarxiv
\arxivtrue
\ifarxiv
  \documentclass[preprint,12pt]{elsarticle}
\else
  \documentclass[preprint,12pt]{elsarticle}
  \usepackage{lineno}
  \modulolinenumbers[5]
\fi
\journal{Computer Methods in Applied Mechanics and Engineering}
\usepackage[utf8]{inputenc} 
\usepackage[english]{babel}
\usepackage[T1]{fontenc}   
\usepackage[normalem]{ulem}
\usepackage{mathpazo}
\usepackage{graphicx}
\usepackage{comment}
\usepackage{amsfonts,stmaryrd,mathrsfs}
\usepackage{bbm}
\usepackage{bm}
\usepackage{enumitem}
\usepackage{tikz-cd}
\usepackage{tikz}
\usepackage{makecell}
\usetikzlibrary{positioning,shapes.geometric,arrows.meta,calc}
\usepackage[top=2.5cm, bottom=2.5cm, left=2cm, right=2cm]{geometry}
\usepackage{subfig}
\usepackage{multirow}
\usepackage{physics}
\usepackage{array}
\usepackage{bigints}
\usepackage{etoolbox}
\usepackage{mdframed}
\usepackage{color}
\usepackage{booktabs}

\usepackage{tabularx}
\usepackage[unicode,colorlinks=true,linkcolor=blue,citecolor=red,pdfencoding=auto,psdextra]{hyperref}
 \usepackage{dsfont}

\usepackage{float}

\usepackage{amsmath}
\usepackage{upgreek}
\usepackage{commath}
\usepackage{amssymb}
\usepackage{xcolor}
\usepackage{amsthm}
\usepackage{csquotes}
\usepackage{titlesec}

\DeclareRobustCommand{\R}{\mathbb{R}}

\newtheorem{theorem}{Theorem}
\newtheorem{lemma}[theorem]{Lemma}

\newmdtheoremenv{proposition}[theorem]{Proposition}

\newtheorem{remark}[theorem]{Remark}

\titleformat{\subsubsection}[runin]
{\bfseries}{\thesubsubsection}{1em}{}

\makeatletter
\renewcommand\subsubsection{\@startsection{subsubsection}{3}{\z@}%
  {12\p@ \@plus 6\p@ \@minus 3\p@}%
  {-0.6em}%
  {\normalfont\normalsize\bfseries}}
\makeatother

\begin{document}
\ifarxiv\else\linenumbers\fi
\begin{frontmatter}
\title{AMOR: Adaptive Modal Operator Refinement for Operator Learning on Arbitrary Meshes and Irregular 3D Geometries}

\cortext[cor1]{Corresponding author}
\author[lbnl,ethz]{David Voderholzer\fnref{equal}}
\ead{dvoderholzer@lbl.gov}
\author[kit]{Leonard Vetter\fnref{equal}\corref{cor1}}
\ead{vetter@kit.edu}
\author[osu]{Thomas O'Leary-Roseberry}
\ead{oleary-roseberry.1@osu.edu}
\fntext[equal]{These authors contributed equally to this work.}
\affiliation[lbnl]{organization={Lawrence Berkeley National Laboratory}, city={Berkeley}, state={CA}, country={USA}}
\affiliation[ethz]{organization={ETH Z\"urich}, city={Z\"urich}, country={Switzerland}}
\affiliation[kit]{organization={Karlsruhe Institute of Technology}, city={Karlsruhe}, country={Germany}}
\affiliation[osu]{organization={The Ohio State University}, city={Columbus}, state={OH}, country={USA}}

\begin{abstract}
Parametric PDEs in science and engineering are often posed on irregular meshes with geometries that may contain holes or complex boundaries, yet popular operator-surrogate architectures such as the Fourier Neural Operator (FNO) rely on structured geometries for fast computational kernels. 
We introduce Adaptive Modal Operator Refinement (AMOR), which replaces FNO's Fourier-based spectral convolutions with reduced-basis blocks and supports both data-driven and prescribed input/output bases through a common encode/decode interface.
AMOR thus applies unmodified to arbitrary meshes and, because its spectral parameterization has no tensor-product dependence on the spatial dimension, it scales favorably to three dimensions, where FNO's spectral weights grow as $m^3$ with the number of spectral modes $m$. 
We prove that AMOR is a universal approximator of continuous operators.
On two-dimensional structured-grid benchmarks, AMOR is competitive with FNO overall and surpasses PCANet: it outperforms FNO on nonlinear diffusion and Helmholtz problems, while a static snapshot basis keeps it from matching FNO's accuracy on the time-dependent Kolmogorov flow problem.
On three-dimensional problems it matches or exceeds FNO's accuracy at lower inference cost on linear elastostatics while trailing FNO on a turbulent lid-driven-cavity flow and scales favorably to problems demanding high capacity.
On irregular meshes, where FNO is not directly applicable, AMOR reduces the relative $L^2$ error over the FNO extension Geo-FNO by $31\%$ (airfoil) and $42\%$ (elasticity).
Finally, AMOR maintains strong performance across three physically distinct, experimentally motivated loading regimes on a topologically nontrivial three-dimensional aircraft-rib mesh, demonstrating its applicability to realistic engineering problems without modification.
\end{abstract}

\ifarxiv\else
\begin{highlights}
\item AMOR replaces FNO's FFT convolution with arbitrary reduced-basis blocks for generic geometries.
\item Arbitrary input/output bases avoid tensor-product mode scaling with dimension.
\item AMOR is a universal approximator of continuous operators.
\item AMOR is competitive with FNO and Geo-FNO on regular and irregular-grid tests.
\item AMOR models aircraft-rib displacement fields on a topologically complex 3D mesh.
\end{highlights}
\fi

\begin{keyword}
Neural operators \sep Operator learning \sep Reduced-order modeling \sep Complex meshes
\end{keyword}

\end{frontmatter}

\section{Introduction}\label{sec:intro}
Solving parametric partial differential equations (PDEs) on arbitrary geometries is central to the design and analysis of engineering systems.
Finite-element and finite-volume methods re-solve the discretized system for every parameter instance.
For $N_1$ degrees of freedom per spatial direction in $d$ dimensions, the total number of unknowns grows as $\mathcal{O}(N_1^d)$, with the cost of a single spatial solve scaling at least linearly and often superlinearly in this number. For time-dependent problems, this cost is incurred repeatedly over the time integration, introducing an additional scaling with the number of time steps.
Consequently, many-query tasks such as uncertainty quantification, optimization, and real-time prediction become expensive~\cite{evans2010partial,cohen2015approximation,kutyniok2022theoretical,bellman1957dynamic}.
Neural operators~\cite{kovachki2023neural,chen1995universal, lu2021learning, li2020neural, li2020multipole} provide a framework for constructing efficient surrogate models by learning mappings between function space representations of PDE inputs and outputs.
Unlike conventional surrogate models tied to a particular discretization, neural operators are formulated on the operator level, with the goal of supporting evaluation across parameter values and discretization resolutions, once trained. 
For PDE solution operators possessing intrinsic low-dimensional structure, neural operators can provably overcome the curse of dimensionality, achieving approximation rates that depend on the regularity of the solution manifold rather than the discretization resolution~\cite{kovachki2021universal, lanthaler2022error,herrmann2024neural}.

The Fourier Neural Operator (FNO)~\cite{li2021fourier} achieves resolution-invariant, globally receptive predictions by learning convolutions in Fourier space~\cite{duruisseaux2025fourier}, but its standard FFT implementation assumes an equispaced Cartesian grid --- the exception rather than the rule for real engineering geometries~\cite{baker2005mesh} --- for the Fast Fourier Transform (FFT)~\cite{cooley1965algorithm} computation.
The same tensor-product structure makes its spectral weights scale as $\mathcal{O}(m^{d})$ for $m$ retained modes per axis in $d$ dimensions, inflating both the parameter count and the cost of the spectral multiplication itself; in three dimensions this is not only a memory bottleneck but a measurable compute-time one. DeepONet~\cite{lu2021learning} evaluates outputs at arbitrary points but uses a fixed sensor representation for its inputs; graph-based operators act on irregular discretizations directly. 
Reduced-basis methods such as PCANet~\cite{bhattacharya2021model}, POD-DeepONet~\cite{lu2022comprehensive}, POD-DL-ROM~\cite{fresca2022pod}, and derivative-informed projected neural networks~\cite{o2022derivative} operate on low-dimensional basis coefficients but do not interleave pointwise nonlinearities in physical space, a key feature of FNO-type models.

Industrial simulation data rarely live on uniform grids: meshes generated from CAD geometry are refined at holes, fillets, contacts, and boundary layers, so element sizes vary by orders of magnitude. 
Resampling onto a uniform grid sets the spacing by the smallest feature in the bounding box, replaces boundaries by staircases or masks, and requires interpolating predictions back to the nodes on which loads, constraints, and peak stresses are defined; learned deformations to a latent grid~\cite{li2023fourier} and bounding-box embeddings~\cite{liu2023dafno} reduce but do not remove these costs. 

We introduce Adaptive Modal Operator Refinement (AMOR), a novel operator-learning architecture that combines the depth and compositional flexibility of neural operators with the geometric adaptability of modal representations, replacing FNO's Fourier-based spectral convolution with reduced-basis blocks usable on arbitrary meshes. Our contributions are:
\begin{itemize}
\item[(C1)] \textbf{Mesh-agnostic, adaptive cross-basis operator.} AMOR replaces FNO's FFT-based spectral convolution with reduced-basis blocks evaluated directly on mesh nodes, using separate input and output reduced bases coupled by a dense learned transformation across all retained modes. 
This makes AMOR agnostic to mesh structure -- it operates on arbitrary node sets without requiring a structured grid -- and admits any fixed or learned basis through a common encode/decode interface without changing the architecture.

\item[(C2)] \textbf{Universality without coordinate inputs.} We prove that AMOR with continuous, point-separating bases that contain the constant function approximates any continuous operator uniformly on compact sets of bounded inputs. The proof extends the averaging argument of~\cite{lanthaler2025nonlocality}: the cross-mode coupling decodes the modes from a constant channel and thereby supplies the positional encoding that the componentwise lifting and projection lack.

\item[(C3)] \textbf{Scalable reduced representation for large 3D problems.} 
By parametrizing the nonlocal operator in reduced coefficient space, rather than in the Fourier spectral product representation,
AMOR avoids the explicit tensor-product mode scaling of Fourier Neural operators. 
We demonstrate competitive accuracy and favorable computational scaling on two- and three-dimensional benchmarks, including problems on highly irregular meshes.

\end{itemize}

\par \textbf{Related Work.}
Truncating the Fourier expansion to a fixed number of modes imposes an implicit smoothness bias, under-resolving high-frequency features and sharp gradients when a solution's energy is not concentrated in the low-frequency spectrum~\cite{duruisseaux2025fourier}; this has motivated both factorized spectral variants that reduce the $\mathcal{O}(m^d)$ parameter cost, e.g.\ F-FNO~\cite{tran2023factorized}, and a re-examination of whether the Fourier basis itself is optimal for spectral operator learning.
PODNO~\cite{cheng2026podno} shows that replacing the Fourier transform with a data-driven POD basis improves spectral efficiency when solution manifolds are not well represented by low Fourier modes, while retaining the advantages of spectral neural operators. 
However, it shares a single POD basis between inputs and outputs -- restrictive when the two fields are physically dissimilar (e.g. a coefficient field and a displacement field, or scalar and vector-valued quantities) -- and, like FNO, is presented only on structured grids. 
AMOR generalizes this idea to separate input and output bases, vector-valued operators, and non-tensor-product meshes, with repeated reduced-basis blocks interleaved with physical-space nonlinearities. 
Other operator architectures replace Fourier spectral layers with alternative convolutional or transform-based representations: convolutional kernels (CNO)~\cite{raonic2023convolutionalneuraloperatorsrobust}, wavelets (WNO)~\cite{TRIPURA2023115783}, Laplace-domain representations (LNO)~\cite{cao2023lnolaplaceneuraloperator}, spherical-harmonic transforms on manifolds (SFNO)~\cite{bonev2023sphericalfourierneuraloperators}, Laplace--Beltrami eigenfunctions (NORM)~\cite{chen2024norm,fano2026}, graph-Laplacian eigenbases which extend spectral layers to unstructured meshes~\cite{sarkar2025sp2gno,behroozi2026gsno}, and learned adaptive frames (ABLE)~\cite{zhao2026able}.

FNO and its variants have been applied to weather forecasting~\cite{pathak2022fourcastnet}, subsurface flow~\cite{wen2022ufno}, and turbulence modeling~\cite{li2021fourier, kochkov2021machine}, but all on structured grids, whereas aerodynamics on complex shapes~\cite{elrefaie2024drivaernet, elrefaie2024drivaernetpp}, patient-specific hemodynamics~\cite{yin2022simulating}, microstructure-resolved materials~\cite{rashid2022learning}, and seismic wave propagation~\cite{yang2021seismic} typically require unstructured meshes. Existing remedies either keep the Fourier basis and adapt the domain to it -- Geo-FNO~\cite{li2023fourier} learns a diffeomorphism to a latent uniform grid (degrading for complex topologies), direct spectral evaluation (DSE)~\cite{lingsch2024beyond} evaluates Fourier modes on non-equispaced points at $\mathcal{O}(mN)$ cost, and DAFNO~\cite{liu2023dafno} embeds a smoothed characteristic function -- retaining FFT-based computation on an embedding grid; or replace the spectral basis with pointwise or mesh-free representations: graph neural operators~\cite{li2020neural, li2023gino, alkin2024upt, alkin2025abupt, hao2023gnot, wu2024transolver, wen2025geometry, mousavi2025rignographbasedframeworkrobust} and mesh-based simulators~\cite{pfaff2021learningmeshbasedsimulationgraph, pmlr-v119-sanchez-gonzalez20a} act on unstructured meshes via message passing, with cost scaling with graph edges; mesh-informed neural networks restrict dense layers to local connections between finite-element spaces~\cite{franco2023minn}; and coordinate/INR-based models (CORAL~\cite{serrano2023operator}, AROMA~\cite{NEURIPS2024_185a120a}, PI-GANO~\cite{zhong2025pigano}) gain geometric flexibility at the price of inference efficiency or a physics-informed training requirement.

\section{Mathematical Setup and Theory}

Let $\Omega\subset\mathbb R^d$ be a bounded Lipschitz domain of positive measure, and set
$\mathcal U=L^2(\Omega;\mathbb R^{C_{\mathrm{in}}})$ and
$\mathcal V=L^2(\Omega;\mathbb R^{C_{\mathrm{out}}})$. For either component count $C$, the inner product is
$\langle f,g\rangle=\sum_{c=1}^C\int_\Omega f_c(x)g_c(x)\,\mathrm dx$.
We seek to learn an operator $\Psi^\dagger\colon\mathcal U\to\mathcal V$ from
input--output pairs $\{(a_i,u_i)\}_{i=1}^{n_s}$, with $u_i=\Psi^\dagger(a_i)$.
A typical example is a PDE solution map defined implicitly by
\begin{equation}\label{eq:parametric_pde}
    \mathcal N(u,a)=0.
\end{equation}

We call finite orthonormal systems
$\{\varphi_m^{\mathrm{in}}\}_{m=1}^{k_{\mathrm{in}}}\subset\mathcal U$ and
$\{\varphi_n^{\mathrm{out}}\}_{n=1}^{k_{\mathrm{out}}}\subset\mathcal V$
\emph{reduced bases}. Their elements are modes, and
$\langle a,\varphi_m^{\mathrm{in}}\rangle$ is a modal coefficient.
The input and output bases may be chosen independently. Section~\ref{subsec:bases_taxonomy} describes data-adaptive (POD, learned), derivative-informed (DIS), geometry-intrinsic (Laplacian), and prescribed (polynomial) constructions.

\subsection{Operator Learning Framework}\label{subsec:operator_learning}
We use the nonlocal neural operator framework of
Lanthaler et al.~\cite{lanthaler2025nonlocality}, with separate component and channel indices.
A hidden state has $C$ physical components and $d_v$ channels per component; each channel
is a vector-valued function on $\Omega$. A network has the form
\begin{equation}\label{eq:neural_operator}
    \Psi_\theta=\mathcal Q\circ\mathcal H^{(L)}\circ\cdots\circ
    \mathcal H^{(0)}\circ\mathcal R,
    \qquad L\ge1.
\end{equation}
Thus there are $L+1$ hidden operator layers. 
Theorem~\ref{thm:AMOR_universality} uses $L\ge2$.
The lifting and projection act identically in every component:
\begin{equation}\label{eq:lifting}
\begin{aligned}
    \mathcal R(a)(x)_{c,j}&=w^{\mathcal R}_j a_c(x)+b^{\mathcal R}_j,\\
    \mathcal Q(h)(x)_c&=W^{\mathcal Q}_2\sigma\bigl(W^{\mathcal Q}_1h_{c,\cdot}(x)+b^{\mathcal Q}_1\bigr)+b^{\mathcal Q}_2,
\end{aligned}
\end{equation}
where $w^{\mathcal R},b^{\mathcal R}\in\mathbb R^{d_v}$,
$W^{\mathcal Q}_1\in\mathbb R^{d_q\times d_v}$, $b^{\mathcal Q}_1\in\mathbb R^{d_q}$,
$W^{\mathcal Q}_2\in\mathbb R^{1\times d_q}$, and $b^{\mathcal Q}_2\in\mathbb R$.
The channel width $d_v$ and the projection width $d_q$ may both vary with the desired approximation accuracy.
Our implementation uses $d_q=d_v$.

The component counts are
\begin{equation}\label{eq:layer_widths}
    C^{(0)}=C^{(1)}=C_{\mathrm{in}},\qquad
    C^{(\ell)}=C_{\mathrm{out}}\quad(2\le\ell\le L+1).
\end{equation}
A standard nonlocal hidden layer is
\begin{equation}\label{eq:hidden_layer}
\begin{aligned}
    h^{(\ell+1)}(x)&=\sigma\bigl(W^{(\ell)}h^{(\ell)}(x)
             +(\mathcal K^{(\ell)}h^{(\ell)})(x)+b^{(\ell)}\bigr),\\
    (\mathcal K^{(\ell)}h)(x)&=\int_\Omega\kappa^{(\ell)}(x,y)h(y)\,\mathrm dy.
\end{aligned}
\end{equation}
The activation acts entrywise. Except at the transition layer $\ell=1$, the pointwise
linear maps and biases act on channels only:
\begin{equation}\label{eq:pointwise_standard}
    W^{(\ell)}=\operatorname{Id}_{C^{(\ell)}}\otimes\widetilde W^{(\ell)},
    \qquad b^{(\ell)}_{c,j}=\widetilde b^{(\ell)}_j,
    \qquad
    \widetilde W^{(\ell)}\in\mathbb R^{d_v\times d_v},\quad
    \widetilde b^{(\ell)}\in\mathbb R^{d_v}.
\end{equation}
Here $\otimes$ denotes the Kronecker product, so $\widetilde W^{(\ell)}$ and $\widetilde b^{(\ell)}$ act identically on every component. At $\ell=1$, the map
$W^{(1)}\colon\mathbb R^{C_{\mathrm{in}}\times d_v}\to
\mathbb R^{C_{\mathrm{out}}\times d_v}$ and the bias
$b^{(1)}\in\mathbb R^{C_{\mathrm{out}}\times d_v}$ are unrestricted.
The transition follows one layer of processing in the input basis.

Throughout, we impose
\begin{equation}\label{eq:sigma_assump}
\begin{gathered}
    \sigma\colon\mathbb R\to\mathbb R\ \text{is globally Lipschitz and nonpolynomial},
    \qquad \sigma(0)=0,\\
    \sigma\ \text{is strictly monotone on a nonempty open interval }\mathcal I.
\end{gathered}
\end{equation}
ReLU, GELU, and tanh satisfy these assumptions; unlike \cite{lanthaler2025nonlocality}, we do not require $\sigma$ to be smooth. 
Global Lipschitz continuity makes the pointwise activation a continuous map on $L^2$; local monotonicity is used only in the universality construction.

\subsection{AMOR as a Neural Operator}\label{subsec:amor_as_no}
Adaptive Modal Operator Refinement (AMOR) uses finite-rank integral operators expressed
in the reduced bases. The decoder and encoder for the input basis, applied columnwise, are
\begin{equation}\label{eq:decoding_operator}
    (\bm\varphi^{\mathrm{in}}Z)_{\cdot,j}
       =\sum_{m=1}^{k_{\mathrm{in}}}Z_{m,j}\varphi_m^{\mathrm{in}},
    \qquad
    \bigl((\bm\varphi^{\mathrm{in}})^*h\bigr)_{m,j}
       =\langle h_{\cdot,j},\varphi_m^{\mathrm{in}}\rangle.
\end{equation}
They map $\mathbb R^{k_{\mathrm{in}}\times d_v}$ to $\mathcal U^{d_v}$ and back,
and are mutually adjoint. The output operators are defined analogously.
In particular, $\Pi_{\mathrm{in}}=\bm\varphi^{\mathrm{in}}(\bm\varphi^{\mathrm{in}})^*$
and $\Pi_{\mathrm{out}}=\bm\varphi^{\mathrm{out}}(\bm\varphi^{\mathrm{out}})^*$
are orthogonal projections onto the corresponding reduced spaces.
The layer bases are
\begin{equation}\label{eq:block_bases}
    (\bm\varphi_{\mathrm{enc}}^{(\ell)},\bm\varphi_{\mathrm{dec}}^{(\ell)})=
    \begin{cases}
      (\bm\varphi^{\mathrm{in}},\bm\varphi^{\mathrm{in}}),&\ell=0,\\
      (\bm\varphi^{\mathrm{in}},\bm\varphi^{\mathrm{out}}),&\ell=1,\\
      (\bm\varphi^{\mathrm{out}},\bm\varphi^{\mathrm{out}}),&\ell\ge2.
    \end{cases}
\end{equation}
Suppressing the layer index on the mode counts, the reduced-basis integral operator is
\begin{equation}\label{eq:AMOR_layer}
    \mathcal K_{\mathrm{RB}}^{(\ell)}h
      =\bm\varphi_{\mathrm{dec}}^{(\ell)}
          \bigl[T^{(\ell)}(\bm\varphi_{\mathrm{enc}}^{(\ell)})^*h\bigr],
    \qquad
    (TZ)_{n,i}=\sum_{m=1}^{k_{\mathrm{enc}}}\sum_{j=1}^{d_v}T_{n,m,i,j}Z_{m,j},
\end{equation}
where $T^{(\ell)}\in\mathbb R^{k_{\mathrm{dec}}\times k_{\mathrm{enc}}\times d_v\times d_v}$.
Its kernel is
\begin{equation}\label{eq:rb_kernel}
    \kappa^{(\ell)}(x,y)_{(c,i),(c',j)}
      =\sum_{n,m}T^{(\ell)}_{n,m,i,j}
          \varphi^{\mathrm{dec}}_{n,c}(x)\varphi^{\mathrm{enc}}_{m,c'}(y).
\end{equation}
This finite-rank kernel is in general neither translation invariant nor a convolution kernel.
\begin{remark}[Translation equivariance]\label{rem:equivariance}
FNO multiplies each retained Fourier mode by its own matrix (\ref{si:fno}), which makes it translation equivariant on periodic domains. The dense coupling $T^{(\ell)}$, data-adapted modes, and reference offsets remove this property from AMOR, so a translation symmetry of $\Psi^\dagger$, where present, must be learned from data. On bounded domains with boundary conditions, localized loads, or geometric features, as in most engineering applications, $\Psi^\dagger$ has no such symmetry, and AMOR represents its position dependence without coordinate inputs (Section~\ref{subsec:universality}).
\end{remark}

AMOR uses gated residual blocks, with no residual connection across the transition:
\begin{equation}\label{eq:AMOR_block}
\begin{aligned}
 h^{(\ell+1)}&=h^{(\ell)}+
   \sigma\bigl(g_\ell[W^{(\ell)}h^{(\ell)}
             +\mathcal K_{\mathrm{RB}}^{(\ell)}h^{(\ell)}+b^{(\ell)}]\bigr),
       &&\ell\ne1,\\
 h^{(2)}&=\sigma\bigl(g_1[W^{(1)}h^{(1)}
             +\mathcal K_{\mathrm{RB}}^{(1)}h^{(1)}+b^{(1)}]\bigr).
\end{aligned}
\end{equation}
Here $g_\ell\in\mathbb R$ are learnable scalar gates.
The encoders and decoders are bounded linear maps. The affine lifting and the
shallow projection are pointwise Lipschitz, and \eqref{eq:sigma_assump} makes each
hidden block Lipschitz on its $L^2$ state space. Consequently, the complete network
$\Psi_\theta$ is a continuous map between the stated $L^2$ spaces.

\begin{remark}[Including a constant mode]\label{rem:constant_mode}
For a basis with $C$ components, let $e_{\mathbf1,C}=(C|\Omega|)^{-1/2}\mathbf1$. 
If necessary, enlarge the reduced space by $\operatorname{span}\{\mathbf1\}$ and choose an orthonormal basis whose first mode is $e_{\mathbf1,C}$; the dimension increases by at most one. 
Our experiments omit this augmentation because it did not improve accuracy.
\end{remark}

\begin{remark}[Reference offsets]\label{rem:offsets}
Our implementation uses fixed reference fields $\mu_{\mathrm{in}}\in\mathcal U$ and $\mu_{\mathrm{out}}\in\mathcal V$, the training-split means, and replaces the kernel path in every block and column by
\[
 \mathcal K^{\mu,(\ell)}_{\mathrm{RB}}h=\bm\varphi^{(\ell)}_{\mathrm{dec}}\,T^{(\ell)}
 (\bm\varphi^{(\ell)}_{\mathrm{enc}})^*(h-\mu_{\mathrm{enc}})+\mu_{\mathrm{dec}} .
\]
Its linear part is \eqref{eq:AMOR_layer}; the remainder is a fixed spatial bias for fixed $T^{(\ell)}$. Offsets are not needed for universality: Theorem~\ref{thm:AMOR_universality} is proved with zero offsets and extends to this affine variant (Remark~\ref{rem:offsets_univ}).
\end{remark}

\subsection{Universal Approximation}\label{subsec:universality}

AMOR applies its reduced-basis projection only inside the integral path.
Its hidden states and nonlinear outputs need not lie in the reduced spaces.
Fixed, finite bases therefore suffice for arbitrarily accurate approximation under the conditions below.

Set $X_C=\bar\Omega\times\{1,\ldots,C\}$, and abbreviate the input and output
versions by $X_{\mathrm{in}}$ and $X_{\mathrm{out}}$.
For (reduced) bases with continuous representatives on $\bar\Omega$, define
\begin{equation}\label{eq:mode_map}
 \mathbf p^{\mathrm{in}}(y,c)
   =\bigl(\varphi^{\mathrm{in}}_{m,c}(y)\bigr)_{m=1}^{k_{\mathrm{in}}},
 \qquad
 \mathbf p^{\mathrm{out}}(x,c)
   =\bigl(\varphi^{\mathrm{out}}_{n,c}(x)\bigr)_{n=1}^{k_{\mathrm{out}}}.
\end{equation}
A basis \emph{separates points} if its mode-value map is injective,
including across different components.

\begin{theorem}[Universality of AMOR with fixed bases]\label{thm:AMOR_universality}
Assume \eqref{eq:sigma_assump}. Let $K\subset\mathcal U$ be compact in
$L^2$, with
$\sup_{a\in K}\max_{1\le c\le C_{\mathrm{in}}}
\|a_c\|_{L^\infty(\Omega)}\le M_\infty<\infty$,
and let $\Psi^\dagger\colon K\to\mathcal V$ be a continuous operator.
Fix finite continuous orthonormal input and output bases that both separate
points and contain their respective normalized constant modes
$(C|\Omega|)^{-1/2}\mathbf1$.
For every $\varepsilon>0$, there is an AMOR network $\Psi_\theta$ with these bases, any depth $L\ge2$, and finite widths $d_v,d_q$, such that
\[
 \sup_{a\in K}\|\Psi^\dagger(a)-\Psi_\theta(a)\|_{\mathcal V}
 <\varepsilon.
\]
The widths may also be chosen equal, $d_q=d_v$.
\end{theorem}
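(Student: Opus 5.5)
We follow the averaging-based strategy of \cite{lanthaler2025nonlocality}: reduce the problem to finitely many scalar functionals of the input and finitely many fixed output profiles, then realize both inside AMOR. Compactness of $K$ in $L^2$ gives an approximating finite-dimensional encoder/decoder pair. Concretely, we pick continuous functionals $\ell_1,\dots,\ell_J\colon K\to\mathbb R$, a continuous map $F\colon\mathbb R^J\to\mathbb R^{J'}$, and fixed continuous output fields $\psi_1,\dots,\psi_{J'}\in C(X_{\mathrm{out}})$ such that
\[
\sup_{a\in K}\Bigl\|\Psi^\dagger(a)-\sum_{r}F_r(\ell(a))\psi_r\Bigr\|_{\mathcal V}<\varepsilon/3.
\]
This comes from projecting $\Psi^\dagger(K)$ onto a finite orthonormal system and using density of continuous functions in $L^2$.

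For the input side, compactness means we may take $\ell_i(a)=\int_{X_{\mathrm{in}}}\eta(a(y)_c)\,\phi_i(y,c)$, with $\phi_i$ continuous and $\eta$ a one-hidden-layer $\sigma$-network. Such generalized moments separate the points of $K$ up to $\varepsilon$-accuracy; this is the standard nonlinear-moment argument, and it uses the uniform bound $M_\infty$. The difficulty is that AMOR has no coordinate inputs. However, the constant mode lets the lifting-plus-kernel path produce a spatially constant channel, and the $T^{(0)}$ coupling can then decode arbitrary modes $\varphi^{\mathrm{in}}_m$ from it. Hence, in layer $\ell=0$, some channels carry $\mathbf p^{\mathrm{in}}(y,c)$ as functions while others carry $a_c(y)$.

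Because the basis separates points, $\mathbf p^{\mathrm{in}}$ embeds $X_{\mathrm{in}}$ homeomorphically into $\mathbb R^{k_{\mathrm{in}}}$. By Stone--Weierstrass and the universal approximation theorem for $\sigma$, any continuous $\phi_i(y,c)\eta(a)$ is uniformly approximable on the bounded range $\mathbf p^{\mathrm{in}}(X_{\mathrm{in}})\times[-M_\infty,M_\infty]$ by a shallow $\sigma$-network of these channels. The residual structure and the gate $g_0$ let layer $0$ add these nonlinear features while the identity carries the rest. Strict monotonicity on $\mathcal I$ lets us pass quantities through $\sigma$ invertibly in small affine windows; we use this to carry channels unchanged when a residual is unavailable, notably at the transition $\ell=1$.

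Layer $1$ performs the averaging. Its encoder in the input basis takes the inner product with the constant mode $e_{\mathbf1,C_{\mathrm{in}}}$, which yields $\ell_i(a)$ up to a constant factor. $T^{(1)}$ places these values on the output constant mode, so the hidden state after the transition contains spatially constant channels $\approx\ell(a)$. The unrestricted $W^{(1)}$ maps component structure from $C_{\mathrm{in}}$ to $C_{\mathrm{out}}$. In layer $\ell\ge2$, the same decoding trick with the output constant mode regenerates $\mathbf p^{\mathrm{out}}(x,c)$, and a pointwise network of $(\ell(a),\mathbf p^{\mathrm{out}}(x,c))$ approximates $\sum_r F_r(\ell)\psi_r(x,c)$ uniformly on the compact set $\ell(K)\times\mathbf p^{\mathrm{out}}(X_{\mathrm{out}})$. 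This uses the out-basis separation and universality of $\sigma$. Any remaining depth is handled by zero gates, so any $L\ge2$ works. The final evaluation is done by $\mathcal Q$, whose shallow form is itself a universal approximator. Padding channels with zeros makes $d_q=d_v$.

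The final step is the error bound. Errors in uniform norm on bounded sets transfer to $L^2$ because $|\Omega|<\infty$, and errors propagate through later layers by their Lipschitz constants. We therefore choose tolerances backwards from $\mathcal Q$.

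I expect the main obstacle to be the exact bookkeeping of channels through the gated nonlinear blocks and the non-residual transition. Specifically, we need to show that each block can simultaneously preserve, approximately, the previously built channels and generate new nonlinear features with only one $\sigma$ per block. My first attempt is to encode each preserved channel into the monotone interval $\mathcal I$ via a small affine scaling and to undo that scaling in the next linear map. The induced errors are controlled by the uniform bounds on all channels, which follow from $M_\infty$ and the boundedness of the continuous modes.
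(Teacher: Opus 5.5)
Your proposal is correct and follows essentially the paper's route. The shared steps are a finite-rank reduction, averaged local features whose positional part is decoded from a constant channel through off-diagonal couplings in $T^{(0)}$, averaging via the constant modes at the transition inside a strictly monotone window of $\sigma$, regeneration of output positional features in layer $2$, and a shallow $\mathcal Q$ that approximates the joint continuous function. The only real difference is that you justify the input encoding by (generalized) moments against continuous test functions plus compactness, rather than citing the averaging lemma of Lanthaler et al.

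The bookkeeping obstacle you anticipate does not arise. Residual blocks preserve inactive channels exactly because $\sigma(0)=0$. The transition needs no carried channels: it creates fresh constants $\sigma(t_0+t\widehat w_r(a))$ and $\sigma(\vartheta_0)\neq0$, and their invertible distortion is absorbed into the continuous target of $\mathcal Q$ rather than undone by a linear map.
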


The proof builds on approximation by averaged local features
\cite[Lemma~A.9, Remark~A.10]{lanthaler2025nonlocality}.
AMOR supplies position internally without explicitly adding coordinates: off-diagonal modal couplings turn a
constant channel into the separating mode values. The proof and the extension to the factorization in
Remark~\ref{rem:factorized} are given in~\ref{si:universality}.

\ref{si:universality_scope} discusses which bases satisfy these assumptions, and~\ref{si:universality_comparison} compares the result with those for FNO, PODNO, and PCA-Net. 
For example, for scalar fields, any span containing $1,x_1,\ldots,x_d$ separates
points.

\subsection{AMOR's Architecture and Implementation}\label{subsec:amor_architecture}
For a discretization with $N$ spatial degrees of freedom shared by the input and output,
a $C$-component field is represented by a vector in $\mathbb R^D$, $D=CN$.
A hidden state is a matrix $H^{(\ell)}\in\mathbb R^{D^{(\ell)}\times d_v}$.
For continuous inputs these degrees of freedom may be nodal values. For general
$L^2$ inputs, a well-defined discretization such as cell averages or finite-element
projection is required, because point evaluation is not defined on $L^2$ equivalence classes.
We use a symmetric positive-definite Gram matrix
\begin{equation}\label{eq:gram_structure}
    M=I_C\otimes M_x
\end{equation}
and the discrete inner product $\langle\underline u,\underline v\rangle_M=\underline u^\top M\underline v$, where $\underline u,\underline v\in\mathbb R^D$ are the coefficient vectors of discrete fields $u_h,v_h$ in a finite-dimensional space $\mathcal U_h\subset L^2(\Omega;\mathbb R^C)$. Underlined symbols denote coefficient vectors throughout. For the consistent finite-element Gram matrix, $\langle u_h,v_h\rangle_{L^2}=\underline u^\top M\underline v$; for lumped or equal-weight quadrature, the equality holds approximately.

\begin{remark}[Gram matrices]\label{rem:gram}
For equal-volume cells, the diagonal quadrature matrix is $M_x=\Delta V I_N$,
where $\Delta V$ is the cell volume. On unstructured meshes, one can use a consistent
finite-element Gram matrix or a lumped Gram matrix with the corresponding positive
volume weights. Equal weights on point clouds define an empirical inner product;
their consistency with the Lebesgue inner product requires suitable sampling or
quadrature assumptions. Boundary-node weights must follow the chosen quadrature rule.
We use equal weights $M_x=\Delta V I_N$ on all structured grids, lumped node-area weights on the Airfoil and Elasticity meshes, the consistent P1 mass matrix for the cantilever-tip rib model, and equal weights for the other rib models.
With equal weights, Euclidean PCA gives $\Phi_E=(\Delta V)^{1/2}\Phi$ and $\Phi_E T\Phi_E^\top=\Phi T\Phi^\top M$, so the block is unchanged.
\end{remark}

On the reference mesh, let $\Phi\in\mathbb R^{D\times k}$ contain the reduced
modes as columns. These bases are constructed to satisfy
\begin{equation}\label{eq:discrete_orthonormality}
    \Phi^*\Phi=\Phi^\top M\Phi=I_k,
    \qquad \Phi^*=\Phi^\top M.
\end{equation}
The reference encoder is therefore the $M$-adjoint of the decoder; for DIS input bases it is the $\mathcal C_{\mathrm{pr}}^{-1}$-adjoint (Section~\ref{subsec:bases_taxonomy}).
After mesh transfer, the decoder contains the prolonged reference modes and the
encoder is their mass-weighted dual, as defined in Section~\ref{subsec:meshes}.
The adjoint and the dual agree for an orthonormal basis; transported bases are not
reorthonormalized.

To cover both meshes, write $B_{\mathrm{enc}},B_{\mathrm{dec}}$ for the decoder
matrices and $E_{\mathrm{enc}}$ for the coefficient encoder. On the reference mesh,
$B=\Phi$ and $E=\Phi^\top M$. A block performs
\begin{equation}\label{eq:transform}
    Z=E_{\mathrm{enc}}H,
    \qquad Z'_{n,i}=\sum_{m,j}T_{n,m,i,j}Z_{m,j},
    \qquad \widetilde H=B_{\mathrm{dec}}Z',
\end{equation}
followed by \eqref{eq:AMOR_block}, with $\widetilde H$ replacing the integral term.
With offsets, this path becomes
$B_{\mathrm{dec}}[T E_{\mathrm{enc}}(H-\mu_{\mathrm{enc}})]+\mu_{\mathrm{dec}}$,
where each reference field is applied columnwise; all experiments use this affine path.

\begin{remark}[Factorized coefficient weights]\label{rem:factorized}
A dense tensor has $d_v^2k_{\mathrm{enc}}k_{\mathrm{dec}}$ parameters.
The factorization $T_{n,m,i,j}=\mathsf B_{n,m,i}\mathsf A_{m,i,j}$ gives
\begin{equation}\label{eq:factorized}
    Z'_{n,i}=\sum_m\mathsf B_{n,m,i}\sum_j\mathsf A_{m,i,j}Z_{m,j}.
\end{equation}
It first mixes channels within each encoding mode, then mixes modes within each
output channel. Its parameter count is $d_vk_{\mathrm{enc}}(d_v+k_{\mathrm{dec}})$,
a fraction $1/k_{\mathrm{dec}}+1/d_v$ of the dense count. This reduces the count
when that fraction is below one. The factorization restricts the general tensor
class, but retains the universal construction in Remark~\ref{rem:factorized_universality}.
\end{remark}

\textbf{Parameter conditioning and global skip.}
Scalar physical parameters $\bm\pi\in\mathbb R^{d_\pi}$ enter by FiLM~\cite{perez2017filmvisualreasoninggeneral}. 
One network with a hidden layer of width $d_{\mathrm{film}}$ maps $\bm\pi$ to $(\gamma,\beta)\in\mathbb R^{2d_v}$, and $\mathrm{FiLM}(h)_{c,j}=(1+\gamma_j)h_{c,j}+\beta_j$ acts identically in every component.
It is applied to the lifted state, to every block update,
\[
 h^{(\ell+1)}=h^{(\ell)}+\mathrm{FiLM}\bigl(\sigma(g_\ell[\,\cdot\,])\bigr)\ (\ell\ne1),\qquad
 h^{(2)}=\mathrm{FiLM}\bigl(\sigma(g_1[\,\cdot\,])\bigr),
\]
with $[\,\cdot\,]$ as in \eqref{eq:AMOR_block}, and to the hidden layer of the projection, $\mathcal Q(h)_c=W^{\mathcal Q}_2\,\mathrm{FiLM}\bigl(\sigma(W^{\mathcal Q}_1h_{c,\cdot}+b^{\mathcal Q}_1)\bigr)+b^{\mathcal Q}_2$, which requires $d_q=d_v$. 
The same $(\gamma,\beta)$ is used at all sites and adds $d_{\mathrm{film}}(d_\pi+2d_v+1)+2d_v$ parameters.
For $\gamma=\beta=0$, FiLM is the identity, so the conditioned class contains the networks of Theorem~\ref{thm:AMOR_universality}.
An optional global skip adds $g_{\mathcal S}(W^{\mathcal S}a+b^{\mathcal S})$, with $W^{\mathcal S}\in\mathbb R^{C_{\mathrm{out}}\times C_{\mathrm{in}}}$, $b^{\mathcal S}\in\mathbb R^{C_{\mathrm{out}}}$; $g_{\mathcal S}=0$ disables it.
Its use per dataset is listed in~\ref{si:film}.

\begin{figure}[htbp]
    \centering
    \includegraphics[width=\linewidth]{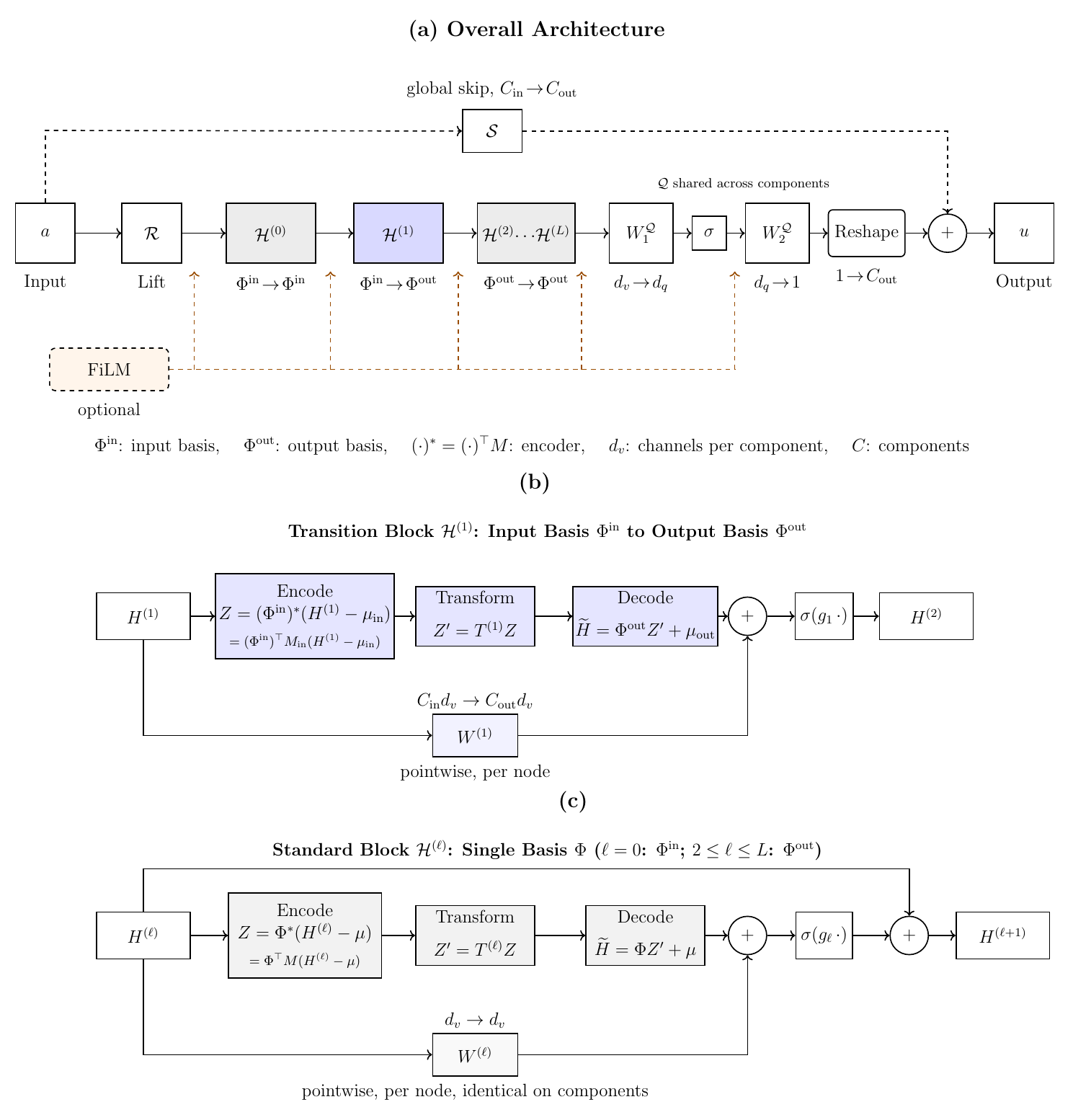}
    \caption{AMOR architecture. (a) Lifting $\mathcal R$, hidden operator layers
    $\mathcal H^{(0)},\ldots,\mathcal H^{(L)}$, the shared shallow projection
    $\mathcal Q$, and optional global skip and FiLM conditioning.
    (b) On the reference mesh, the transition block encodes in
    $\Phi^{\mathrm{in}}$, applies $T^{(1)}$, and decodes in $\Phi^{\mathrm{out}}$;
    on a target mesh these operations use the prolonged decoder and dual encoder
    from Section~\ref{subsec:meshes}. Its unrestricted pointwise map changes
    the component count. (c) Standard blocks use one basis on both sides and a
    componentwise shared pointwise map.
    Gates $g_\ell$, biases, and residual connections follow \eqref{eq:AMOR_block}; reference offsets $\mu$ follow Remark~\ref{rem:offsets}.}
    \label{fig:architecture}
\end{figure}

\textbf{Computational complexity.}
For $m$ retained Fourier modes per spatial direction, the FNO coefficient map costs
$\mathcal O(d_v^2m^d)$. The AMOR coefficient map costs
$\mathcal O(d_v^2k_{\mathrm{enc}}k_{\mathrm{dec}})$ in the dense case and
$\mathcal O(d_vk_{\mathrm{enc}}(d_v+k_{\mathrm{dec}}))$ in the factorized case.
Writing $k_{\mathrm{enc}}\asymp k_{\mathrm{dec}}\asymp k$, these become
$\mathcal O(d_v^2k^2)$ and $\mathcal O(d_v^2k+d_vk^2)$, respectively.
Thus neither general coefficient map is linear in $k$ at fixed $d_v$.
Only the dense basis transforms scale linearly in $k$ for fixed field size and width.
Although $d$ does not appear explicitly in the AMOR coefficient-map formulas,
the required $k$ and spatial resolution can depend on $d$.

For diagonal or uniformly sparse mass matrices, encoding and decoding cost
$\mathcal O(d_v(D_{\mathrm{enc}}k_{\mathrm{enc}}+
D_{\mathrm{dec}}k_{\mathrm{dec}}))$; a general sparse encoder also incurs
$\mathcal O(d_v\operatorname{nnz}(M_{\mathrm{enc}}))$ for mass multiplication.
The bases require $\mathcal O(D_{\mathrm{in}}k_{\mathrm{in}}+
D_{\mathrm{out}}k_{\mathrm{out}})$ storage and are shared across layers.
The target-mesh dual encoder can be precomputed; this preserves the same asymptotic
transform cost and storage order, with the offline cost stated in
Section~\ref{subsec:meshes}.
Except for learned bases, their construction is an offline cost; a full dense SVD
for POD costs $\mathcal O(\min(n_s,D)n_sD)$, with truncated or randomized alternatives
available when only leading modes are needed.

\begin{table}[htbp]
\centering
\small
\setlength{\tabcolsep}{4pt}
\renewcommand{\arraystretch}{1.25}
\begin{tabular}{lccc}
\toprule
Quantity & FNO & AMOR dense & AMOR factorized \\
\midrule
Transforms & $\mathcal O(d_vN\log N)$ & $\mathcal O(d_vDk)$ & $\mathcal O(d_vDk)$ \\
Coefficient map & $\mathcal O(d_v^2m^d)$ & $\mathcal O(d_v^2k_e k_d)$ & $\mathcal O(d_vk_e(d_v+k_d))$ \\
Standard pointwise map & $\mathcal O(d_v^2N)$ & $\mathcal O(Cd_v^2N)$ & $\mathcal O(Cd_v^2N)$ \\
Stored basis arrays & none & $\mathcal O(Dk)$ & $\mathcal O(Dk)$ \\
\bottomrule
\end{tabular}
\caption{Per-layer operation counts, with $k_e=k_{\mathrm{enc}}$,
$k_d=k_{\mathrm{dec}}$, $k=\max(k_e,k_d)$, and
$D=\max(D_{\mathrm{enc}},D_{\mathrm{dec}})$.
The coefficient-map expressions also give parameter counts.
The AMOR transition pointwise map instead costs
$\mathcal O(NC_{\mathrm{in}}C_{\mathrm{out}}d_v^2)$.
Here $d_v$ is the total FNO channel width and the AMOR width per component;
equal $d_v$ therefore does not mean equal total hidden width.
Projection, conditioning, activation, and skip costs are not included.}
\label{tab:complexity}
\end{table}

\subsection{Moving Between Meshes}\label{subsec:meshes}
We transport the reference basis without reorthonormalizing it. A mass-weighted
dual encoder accounts for the target-mesh geometry, so the learned modal
coordinates and coefficient tensor retain their reference meaning.

Let $\Phi_1\in\mathbb R^{D_1\times k}$ be a reference basis with
$\Phi_1^\top M_1\Phi_1=I_k$, and let $P=I_C\otimes P_x$ map reference fields
to the target mesh. Define the target decoder and its Gram matrix by
\begin{equation}\label{eq:mesh_transfer}
 B=P\Phi_1,\qquad G=B^\top M_2B.
\end{equation}
Assume $M_2$ is symmetric positive definite and $B$ has full column rank. The
target encoder is
\begin{equation}\label{eq:transfer_dual}
 E=G^{-1}B^\top M_2.
\end{equation}
Thus the decoder simply prolongates the reference modes, while the Gram correction
is included entirely in the encoder. In computation, $G$ is factored and the
associated linear systems are solved; an explicit matrix inverse is unnecessary.
The matrix $E$ may be precomputed once and shared across layers.

\begin{lemma}[Preservation of reference modal coordinates]\label{lem:dual_transfer}
The encoder and decoder in \eqref{eq:mesh_transfer}--\eqref{eq:transfer_dual} satisfy
\begin{equation}\label{eq:transfer_coordinates}
 EB=I_k,\qquad E(Bz)=z\quad(z\in\mathbb R^k).
\end{equation}
For arbitrary target data $h$, $Eh$ is the unique minimizer of
$\|h-Bz\|_{M_2}^2$, and $BE$ is the $M_2$-orthogonal projector onto
$\operatorname{range}(B)$.
\end{lemma}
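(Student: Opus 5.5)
The plan is to verify the three claims by direct linear algebra, using only that $M_2$ is symmetric positive definite and $B$ has full column rank. First, $G$ must be invertible, so that $E$ in \eqref{eq:transfer_dual} is well defined. For $z\neq0$ we have $Bz\neq0$ by full column rank, hence $z^\top Gz=(Bz)^\top M_2(Bz)>0$. So $G$ is symmetric positive definite, and in particular invertible. Then $EB=G^{-1}B^\top M_2B=G^{-1}G=I_k$, and $E(Bz)=z$ follows immediately. This is the identity \eqref{eq:transfer_coordinates}.

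For the least-squares characterization, I will expand $J(z)=\|h-Bz\|_{M_2}^2=h^\top M_2h-2z^\top B^\top M_2h+z^\top Gz$. This is a strictly convex quadratic in $z$, because its Hessian $2G$ is positive definite. Its unique minimizer is therefore the unique stationary point, obtained from the normal equations $Gz=B^\top M_2h$, that is, $z=G^{-1}B^\top M_2h=Eh$. Alternatively, one can write $h-Bz=(h-BEh)+B(Eh-z)$ and check that $h-BEh$ is $M_2$-orthogonal to $\operatorname{range}(B)$: indeed $B^\top M_2(h-BEh)=B^\top M_2h-GEh=0$. The Pythagorean identity then gives $J(z)=J(Eh)+\|B(Eh-z)\|_{M_2}^2$, and the second term vanishes only when $z=Eh$, again by full column rank. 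I would use this second route, since it also prepares the projector statement.

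For the projector claim, set $\Pi=BE$. Idempotence follows from $\Pi^2=B(EB)E=BE$. The range of $\Pi$ equals $\operatorname{range}(B)$: it is contained in $\operatorname{range}(B)$, and $\Pi Bz=Bz$ gives the reverse inclusion. $M_2$-self-adjointness means $M_2\Pi=\Pi^\top M_2$, and both sides equal $M_2BG^{-1}B^\top M_2$ because $G^{-1}$ is symmetric. Together these make $\Pi$ the $M_2$-orthogonal projector onto $\operatorname{range}(B)$.

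No step is a genuine obstacle. The only point needing care is establishing the invertibility and symmetry of $G$ from the two stated hypotheses, since everything else reduces to it.
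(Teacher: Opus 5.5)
Your proposal is correct and follows the paper's proof. Both arguments use $EB=G^{-1}G=I_k$, the normal equations $Gz=B^\top M_2h$ of the strictly convex least-squares problem, and the $M_2$-orthogonality of the residual $h-BEh$ to $\operatorname{range}(B)$. You also make explicit two points the paper leaves implicit: full column rank of $B$ and positive definiteness of $M_2$ make $G$ symmetric positive definite, and $BE$ is idempotent and $M_2$-self-adjoint.
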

\begin{proof}
The first identity is $EB=G^{-1}B^\top M_2B=I_k$.
The normal equations for the strictly convex least-squares problem are
$Gz=B^\top M_2h$, whose solution is $z=Eh$.
The residual therefore satisfies $B^\top M_2(h-BEh)=0$, proving the projector claim.
\end{proof}

The distinction from the reference-mesh formulation is explicit:
$B^\top M_2$ is the mass adjoint of the decoder, whereas $E$ is its dual
coefficient extractor. They coincide when $G=I_k$. In particular, on the reference
mesh $B=\Phi_1$ and $E=\Phi_1^\top M_1$.

Apply this construction separately to the encoding and decoding bases of each
block. With the original tensor $T$ unchanged, the target block is
\begin{equation}\label{eq:transferred_block}
 \mathcal K_2H=B_{\mathrm{dec}}\bigl[T(E_{\mathrm{enc}}H)\bigr].
\end{equation}
For any coefficient array $Z$, Lemma~\ref{lem:dual_transfer} gives
\[
 \mathcal K_2(B_{\mathrm{enc}}Z)
   =B_{\mathrm{dec}}(TZ)
   =P_{\mathrm{dec}}\Phi_{1,\mathrm{dec}}(TZ).
\]
This is exactly the prolongation of the reference block output for input
$\Phi_{1,\mathrm{enc}}Z$. No change to $T$ is required, and its factorized form
is preserved. Offsets are transported by $\mu_2=P\mu_1$ and are subtracted and
added columnwise as in Remark~\ref{rem:offsets}.

These identities preserve coefficient coordinates exactly on the transported
modal spaces. They do not assert equality of complete network evaluations across
meshes: hidden states can leave those spaces, and interpolation generally does
not commute with pointwise nonlinearities. For arbitrary reference fields, exact
agreement of the encodings additionally requires $E_2P=E_1$. A sufficient
condition is $P^\top M_2P=M_1$, as for exact inclusion of nested finite-element
spaces with exact mass matrices. General transfer accuracy requires consistency
of the field discretizations and transport, and is assessed separately from the
continuum universality theorem. See~\ref{subsec:transfer} reports the mesh-transfer experiments.

The full-column-rank assumption excludes transfers that collapse distinct modes.
The conditioning of $G$ controls the sensitivity of the dual-coordinate solve.
For one basis, forming $B$, $G$, and a precomputed $E$ costs
$\mathcal O(k\operatorname{nnz}(P)+k\operatorname{nnz}(M_2)+D_2k^2+k^3)$.
Storing both $B$ and $E$ requires $\mathcal O(D_2k)$ memory, and each subsequent encoding or decoding costs $\mathcal O(D_2kd_v)$ for $d_v$ channels.

\subsection{Reduced Basis Constructions}\label{subsec:bases_taxonomy}
We use the discrete notation above, suppressing input/output superscripts when
unambiguous. Each construction produces an orthonormal basis on the reference
mesh, $\Phi\in\mathbb R^{D\times k}$ with $\Phi^\top M\Phi=I_k$, subject to
its stated rank assumptions. The transport in Section~\ref{subsec:meshes} then
retains these reference modes without target-mesh reorthonormalization.
For a full-column-rank matrix $\Xi$, symmetric (L\"owdin) orthonormalization is
\begin{equation}\label{eq:lowdin}
    \mathrm{orth}_M(\Xi)=\Xi G_\Xi^{-1/2},\qquad G_\Xi=\Xi^\top M\Xi.
\end{equation}
It is the unique minimizer of $\|M^{1/2}(\Phi-\Xi)\|_F$ over
$\Phi^\top M\Phi=I_k$~\cite{lowdin1950nonorthogonality}, since $M^{1/2}\Phi$ is the orthonormal polar factor of $M^{1/2}\Xi$~\cite[Ch.~8]{higham2008functions}.
An alternative is unpivoted $M$-weighted QR, which preserves the leading column spans.
For $M=\Delta V I_D$, Euclidean orthonormal directions are scaled by
$(\Delta V)^{-1/2}$ to obtain $M$-orthonormal modes.

POD and DIS produce vector-valued modes directly. Harmonic and polynomial modes are first constructed for scalar fields and then lifted as
$\Phi=I_C\otimes\Phi_{\mathrm{sc}}$.
Before any optional constant-mode augmentation, this gives $k=Ck_{\mathrm{sc}}$;
reported mode counts include all components and any augmentation.
Theorem~\ref{thm:AMOR_universality} requires the normalized global constant, added if necessary by Remark~\ref{rem:constant_mode}. 
Discrete orthonormality alone does not establish continuum continuity or point separation.

\textbf{POD/PCA~\cite{sirovich1987turbulence,berkooz1993proper}.}
For snapshots arranged rowwise in $Y\in\mathbb R^{n_s\times D}$, let $\underline\mu$
be the sample mean, $Y_0=Y-\mathbf1\underline\mu^\top$, and
$\Sigma=n_s^{-1}Y_0^\top Y_0$.
Maximizing mean squared projected energy subject to $M$-normalization gives
\begin{equation}\label{eq:pca}
    \Sigma M\Phi=\Phi\operatorname{diag}(\lambda_1,\ldots,\lambda_k),
    \qquad \lambda_1\ge\cdots\ge\lambda_k,
    \qquad \Phi=M^{-1/2}V_k,
\end{equation}
where $V_k$ contains leading right singular vectors of $Y_0M^{1/2}$.
The matrix $\Sigma M$ is self-adjoint in the $M$ inner product.
For positive-variance modes, $k$ cannot exceed $\operatorname{rank}(Y_0)$.
The reference offsets are the input and output sample means.
Centering snapshots does not imply that the modes are spatially mean-zero.

\textbf{Harmonic (Laplacian)~\cite{belkin2003laplacian,levy2006laplace}.}
The scalar modes solve
\begin{equation}\label{eq:laplacian}
    L_\Delta\Phi_{\mathrm{sc}}=M_x\Phi_{\mathrm{sc}}\operatorname{diag}(\lambda_1,\ldots,\lambda_{k_{\mathrm{sc}}}),
    \qquad \lambda_1\le\cdots\le\lambda_{k_{\mathrm{sc}}},
    \qquad \Phi_{\mathrm{sc}}^\top M_x\Phi_{\mathrm{sc}}=I_{k_{\mathrm{sc}}}.
\end{equation}
Here $L_\Delta$ is a finite-element stiffness matrix with the specified boundary
conditions, or a symmetric graph Laplacian
$L_\Delta=\operatorname{diag}(\mathcal A\mathbf1)-\mathcal A$ with nonnegative
symmetric adjacency $\mathcal A$.
Boundary conditions and graph construction are part of the basis specification.
Neumann and standard graph Laplacians contain constant null modes; Dirichlet
eigenfunctions generally do not. Repeated eigenvalues leave a choice of basis
within the eigenspace, so bases fitted independently on different meshes require
alignment before reusing mode-indexed weights.
This construction depends on the chosen geometry and differential or graph operator,
not on the training snapshots.

\textbf{Learned orthonormal bases~\cite{helfrich2018orthogonal}.}
A trainable matrix $\Xi\in\mathbb R^{D\times k}$ is orthonormalized in each forward
pass by \eqref{eq:lowdin} or $M$-weighted QR. Full column rank is required; near rank
deficiency also affects numerical conditioning. A Cayley parametrization supplies
an optional rotation of the modal coordinates:
\begin{equation}\label{eq:learned_cayley}
    \Gamma=(I_k+S)^{-1}(I_k-S),\qquad
    S^\top=-S,\qquad \widetilde\Phi=\Phi\Gamma^\top.
\end{equation}
Since $I_k+S$ is invertible and $\Gamma^\top\Gamma=I_k$, orthonormality is preserved.
The skew parameters are unrestricted, but the Cayley parametrization does not
cover orthogonal matrices with eigenvalue $-1$.
For an unrestricted dense coefficient tensor, a modal rotation can be absorbed
into that tensor; it changes the parametrization rather than enlarging the
represented reduced space. These are the bases optimized jointly with the network.

\textbf{Polynomial bases~\cite{szego1939orthogonal}.}
Rescale each coordinate affinely to $[-1,1]$ and evaluate tensor-product Jacobi
polynomials, ordered by total degree:
\begin{equation}\label{eq:polynomial}
    \widetilde\phi_{\mathbf n}(x)
      =\prod_{i=1}^d\mathcal P_{n_i}^{(\alpha,\beta)}(x_i),
    \qquad |\mathbf n|\le n_{\max}.
\end{equation}
The choices $(\alpha,\beta)=(1,1),(0,0),(-\tfrac12,-\tfrac12)$ give a Jacobi family,
Legendre polynomials, and polynomials proportional to the Chebyshev family,
respectively. Their classical orthogonality concerns the corresponding weighted
product measure on the full reference box; it need not hold on a general domain
or under the discrete mass inner product.
Provided the sampled polynomial matrix has full column rank, set
$\Phi_{\mathrm{sc}}=\mathrm{orth}_{M_x}(\widetilde\Phi)$ and lift it componentwise.
With total degree at least one, the underlying continuous scalar space contains the constants and coordinates and therefore separates spatial points.
After componentwise lifting, the corresponding continuous polynomial spans contain the constant function and coordinates, and thereby satisfy the point-separation requirement of Theorem~\ref{thm:AMOR_universality}; the numerical implementations are orthonormalized in the chosen numerical inner product. 

\textbf{Derivative-informed subspaces (DIS)~\cite{zahm2019gradientbaseddimensionreductionmultivariate,o2022derivative,olearyroseberry2024derivative}.}
DIS assumes a Gaussian input law $\mathcal N(\underline\mu_{\mathrm{in}},\mathcal C_{\mathrm{pr}})$ and uses Jacobians $J=\partial\Psi_h^\dagger/\partial\underline a$ of the discrete forward map at sampled inputs. With
\[
 \Sigma_{\mathrm{in}}=\mathbb E[J^\top M_{\mathrm{out}}J],\qquad
 \Sigma_{\mathrm{out}}=\mathbb E[J\,\mathcal C_{\mathrm{pr}}J^\top],
\]
the input and output modes solve
\begin{equation}\label{eq:dis}
 \Sigma_{\mathrm{in}}\Phi_{\mathrm{in}}=\mathcal C_{\mathrm{pr}}^{-1}\Phi_{\mathrm{in}}\Lambda_{\mathrm{in}},
 \qquad
 \Sigma_{\mathrm{out}}M_{\mathrm{out}}\Phi_{\mathrm{out}}=\Phi_{\mathrm{out}}\Lambda_{\mathrm{out}},
\end{equation}
with $\Phi_{\mathrm{in}}^\top\mathcal C_{\mathrm{pr}}^{-1}\Phi_{\mathrm{in}}=I$, $\Phi_{\mathrm{out}}^\top M_{\mathrm{out}}\Phi_{\mathrm{out}}=I$, and eigenvalues ordered decreasingly. In this prior metric, the discarded input eigenvalues bound the mean-squared error of the best approximation that depends only on the retained input coordinates~\cite{zahm2019gradientbaseddimensionreductionmultivariate}. The output modes are the $M_{\mathrm{out}}$-weighted POD of Jacobian actions $J\underline z_i$ on prior samples $\underline z_i\sim\mathcal N(0,\mathcal C_{\mathrm{pr}})$. The input encoder is the $\mathcal C_{\mathrm{pr}}^{-1}$-adjoint of the decoder, $E_{\mathrm{in}}=\Phi_{\mathrm{in}}^\top\mathcal C_{\mathrm{pr}}^{-1}=\Lambda_{\mathrm{in}}^{-1}\Phi_{\mathrm{in}}^\top\Sigma_{\mathrm{in}}$, so that $E_{\mathrm{in}}\Phi_{\mathrm{in}}=I$ without whitening the input; the output encoder is the $M_{\mathrm{out}}$-adjoint. All required actions are computed from Jacobian--vector and adjoint products, without forming $J$; we represent $\mathcal C_{\mathrm{pr}}$ by a low-rank approximation plus a small diagonal shift. With a finite-element discretization of $\mathcal C_{\mathrm{pr}}$, the construction applies unchanged to unstructured meshes. DIS uses derivatives of the target map, whereas POD uses snapshots and the harmonic construction a spatial operator.

\section{Methods}\label{sec:methods}

We evaluate AMOR on eight benchmark problems spanning elliptic, hyperbolic, parabolic, and solid-mechanics PDEs on both structured and unstructured meshes.

\subsection{Data}\label{subsec:data}
Table~\ref{tab:datasets} summarizes the eight datasets; solver, element type, boundary conditions, and parameter ranges are given in full in ~\ref{si:data}.

\begin{table}[htbp]
\centering\small
\caption{Benchmark datasets. Train/valid/test split $70/20/10$, except Airfoil and Elasticity (Geo-FNO split $1000/200$).}
\label{tab:datasets}
\begin{tabular}{llllr}
\toprule
Dataset & Operator map & Mesh & Resolution/nodes & $n_s$ \\
\midrule
Nonlinear diffusion & $(f,m)\mapsto u$ & regular & $100\times100$ & 10,000 \\
Helmholtz & $m\mapsto(u_r,u_i)$ & regular (P1) & $100\times100$ & 10,000 \\
Kolmogorov flow~\cite{li2022mno, li2022mno_zenodo} & $u(t)\mapsto u(t+\Delta t)$ & regular & $64^2/128^2$ & 200 traj. \\
Airfoil~\cite{li2023fourier} & geometry $\mapsto$ Mach number & curvilinear & $221\times51$ & 1,200 \\
Elasticity~\cite{li2023fourier} & geometry $\mapsto$ stress & point cloud & 972 nodes & 2,000 \\
Elastostatics 3D & $E\mapsto\mathbf u$ & regular ($Q_1$) & $64^3$ & 1,000 \\
FlowBench LDC~\cite{tali2024flowBench} & geometry $\mapsto(\mathbf u,p)$ & regular & $128^3$ & 1,000 \\
Aircraft rib & $(E,\mathbf t_\Gamma)\mapsto\mathbf u$ & irregular (P1) & 72,615 nodes & 10k--40k \\
\bottomrule
\end{tabular}
\end{table}

\emph{Nonlinear diffusion} solves
\begin{equation}
    -\nabla\cdot(e^{m}\nabla u) = f \quad \text{on } [0,1]^2,
\end{equation}
with homogeneous Dirichlet conditions and $f,m$ independent Gaussian-process draws at three correlation lengths $(\ell_m,\ell_f)$, giving input--output pairs $((f,m),u)$.

\emph{Helmholtz} solves the fixed-frequency ($600\,\mathrm{Hz}$) system with wavenumber $k_{\mathrm w}(\mathbf x)=k_{\mathrm{ref}}\exp(m(\mathbf x))$ and PML absorption on $[0,3]^2$, with $m\sim\textsf{BiLaplacian}(\sigma,\rho)$ (three variants), yielding pairs $(m,(u_r,u_i))$.

\emph{Kolmogorov flow}~\cite{li2022mno, li2022mno_zenodo} solves the 2D Navier--Stokes equations at $\mathrm{Re}\in\{40,500,5000\}$; the datasets differ in spatial resolution ($64^2$ vs.\ $128^2$) and in the ratio of the snapshot interval to the flow decorrelation time.

\emph{Linear elastostatics (3D)} solves
\begin{equation}
    -\nabla\cdot\boldsymbol\sigma(\mathbf u) = \mathbf 0, \qquad
    \boldsymbol\sigma = \lambda\,\mathrm{tr}(\boldsymbol\varepsilon(\mathbf u))\mathbf I + 2\mu\,\boldsymbol\varepsilon(\mathbf u),
\end{equation}
on a clamped, uniformly loaded unit cube, with Lam\'e fields derived from a log-normal Young's modulus field $E(\mathbf x)$ at fixed Poisson ratio; input-output pairs are $(E,\mathbf u)$.

\emph{FlowBench LDC} is the public 3D lid-driven-cavity Navier--Stokes dataset of~\cite{tali2024flowBench}, with varying internal geometry (ellipsoids, toroids, boxes, cylinders).

\emph{Airfoil} and \emph{Elasticity}~\cite{li2023fourier} are the Geo-FNO benchmarks on a curvilinear structured mesh (Airfoil) and a sample-dependent point cloud (Elasticity), predicting the Mach-number field and an internal stress field, respectively, on geometry-varying meshes.

\emph{Aircraft rib.} All three aircraft-rib datasets solve the same static linear-elasticity problem
\begin{equation}
    -\nabla\cdot\boldsymbol\sigma(\mathbf u) = \mathbf 0 \ \text{in } \Omega, \qquad
    \boldsymbol\sigma = 2\mu\,\boldsymbol\varepsilon(\mathbf u) + \lambda\,\mathrm{tr}(\boldsymbol\varepsilon(\mathbf u))\mathbf I,
\end{equation}
on a single tetrahedral rib mesh ($n_{\mathrm{node}}=72{,}615$), clamped at the root, with $E(\mathbf x)$ a log-normal random field. The three versions differ in where and how the Neumann traction is applied and, for the multipatch bracket and distributed aero versions, in a variable Poisson ratio (Fig.~\ref{fig:aircraft_rib_dataset}), with loading configurations motivated by, but not trying to reproduce, structural test scenarios reported in the aerospace literature~\cite{Petrasinovic2017,IJRSIWingSpar,Esposito2023,RIBES,CARNEIRO2019708,ANNRibLoad,IJERTBracket}:
\begin{itemize}[nosep]
    \item \emph{Cantilever tip} (10,000 samples): axial traction on a fixed tip patch.
    \item \emph{Distributed aero} (10,000 samples): uniform traction over all admissible facets away from the clamp.
    \item \emph{Multipatch bracket} (20,000 samples): 1--3 randomly placed, independently oriented local traction patches.
\end{itemize}
A \emph{combined} dataset pools all three ($40{,}000$ samples, load included as an input field). In every sample the total load is rescaled to keep the maximum deflection within a small-displacement regime ($0.01$--$0.3\%$ of rib length).
\begin{figure}[htbp]
    \centering
    \includegraphics[width=\linewidth]{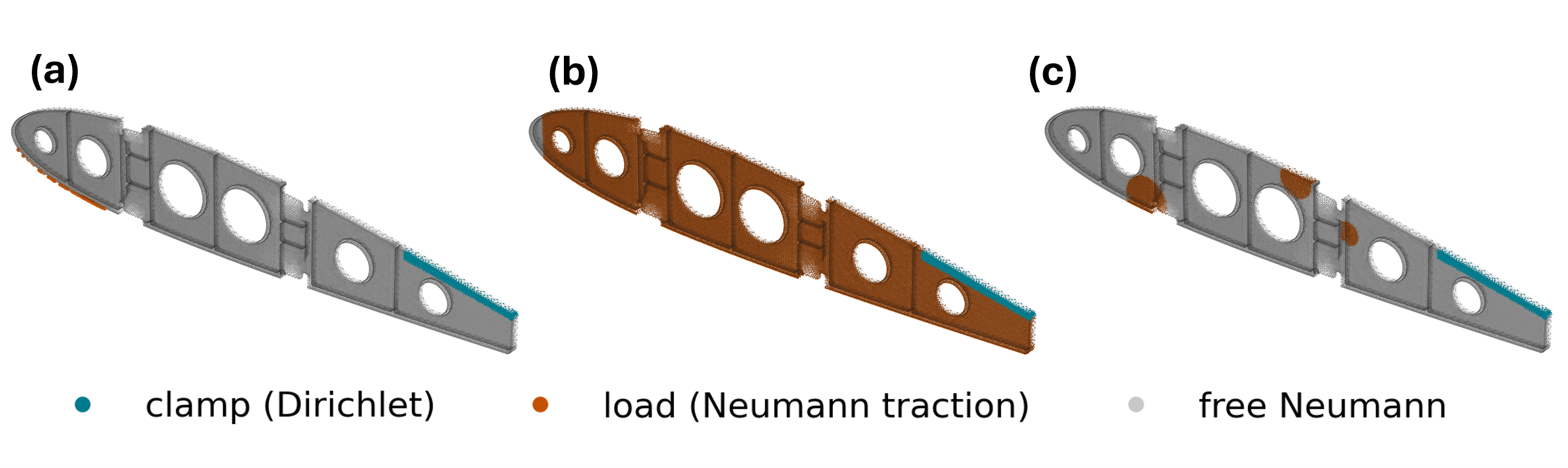}
    \caption{Boundary conditions of the three aircraft-rib datasets. 
    Dirichlet clamp depicted in cyan (constant over all versions), Neumann load in orange and traction-free in grey. Applied load on a fixed tip-patch with fixed direction in \emph{cantilever tip} (a), uniformly excluding tip and clamping in \emph{distributed aero} (b) and on $1$--$3$ localized patches with independent load directions \emph{multipatch bracket} (c).}
    \label{fig:aircraft_rib_dataset}
\end{figure}

\subsection{Model configurations and training}\label{sec:model_config}

We compare AMOR with FNO~\cite{li2021fourier} and PCANet~\cite{bhattacharya2021model}. 
AMOR and FNO use the same channel width and the same number of nonlocal layers, but parametrize their nonlocal operators differently (\ref{si:params}); for AMOR, the nonlocal layers are the input, transition, and residual blocks. 
On the two-dimensional benchmarks, FNO retains $12\times12$ Fourier modes, and AMOR and PCANet use $k_{\mathrm{in}}=k_{\mathrm{out}}=144$ modes of the same PCA basis; PCANet uses its own width and depth. Elsewhere, AMOR retains $99.9\%$ of the training variance, except for FlowBench (Laplacian basis) and the aircraft rib, whose ranks are selected on validation data. Coefficient tensors are factorized whenever $k_{\mathrm{enc}}k_{\mathrm{dec}}>10^4$. Parameter counts are listed in  Table~\ref{tab:model_configs}.

All models are trained with AdamW~\cite{kingma2015adam,loshchilov2019decoupledweightdecayregularization} (learning rate $2\times10^{-3}$, weight decay $10^{-4}$, batch size fixed per dataset (details in~\ref{si:computational_details}), cosine annealing, at most $300$ epochs), gradient clipping at norm $3$, and early stopping on the validation relative $L^1$ error with patience $50$; the checkpoint with the lowest validation error is evaluated. The loss is the relative $L^1$ error, except on the Geo-FNO benchmarks, which follow~\cite{li2023fourier}. Data are split $70/20/10$ into training, validation, and test sets (by geometry for FlowBench), except for the Geo-FNO benchmarks (published $1000/200$ split). Bases and channel-wise normalization statistics are fit on the training split only. We report relative $L^1$ and $L^2$ test errors of the denormalized fields.
Except for the aircraft rib, every model is trained with five seeds that vary only the initial weights; split, normalization, batch order, and fixed bases are identical across seeds.
We report mean and standard deviation; the rib results use a single seed. Hardware is described in~\ref{si:hardware}.

\section{Results and Discussion}\label{sec:results}

\subsection{Structured grid benchmarks in two dimensions}\label{subsec:benchmarks}

Section~\ref{subsec:compet_FNO} compares AMOR with FNO and PCANet on structured grids, where FNO applies. AMOR and PCANet use the same PCA basis, which isolates the effect of the architecture; Section~\ref{subsec:basis} varies the basis.
We then turn to the irregular-mesh and three-dimensional problems that motivate the AMOR architecture (Sec.~\ref{subsec:irregular}, Sec.~\ref{subsec:3d}).
Exemplary predictions are shown in~\ref{si:visualization}.

\subsubsection{AMOR is competitive with FNO on structured grids}\label{subsec:compet_FNO}

Table~\ref{tab:benchmark} summarises the performance of AMOR, FNO, and PCANet on the three structured-grid benchmarks where FNO is applicable.
\begin{table}[htbp]
\ifarxiv\else
\begingroup
\linespread{1.1}\selectfont
\fi
\centering
\caption{Performance comparison (relative errors [\%], mean $\pm$ standard deviation over five seeds) of AMOR, FNO, and PCANet on benchmark problems. Best results in \textbf{bold}.  AMOR is more accurate than FNO on the nonlinear-diffusion and Helmholtz problems, and substantially outperforms PCANet despite using the same basis; the exception is the transport-dominated Kolmogorov-flow problem, where FNO performs better.}
\label{tab:benchmark}
\small
\begin{tabular}{llcccc}
\toprule
\textbf{Dataset} & \textbf{Model} & \textbf{$L^1$ Error} & \textbf{$L^2$ Error} & \textbf{Infer. Time} & \textbf{Train Time} \\
&  & [\%] & [\%] & [ms/sample] & [s/epoch] \\
\midrule
\midrule
Nonlinear & AMOR         & $\mathbf{0.50\pm0.02}$ & $\mathbf{0.54\pm0.02}$ & 0.44 & 6.10 \\
Diffusion & FNO          & $1.75\pm0.10$ & $1.73\pm0.10$ & 0.23 & 3.58 \\
$(\ell_m, \ell_f)=(0.5, 0.4)$ & PCANet       & $14.26\pm0.54$ & $14.36\pm0.52$ & \textbf{0.02} & \textbf{0.37} \\
\midrule
Nonlinear & AMOR         & $\mathbf{1.97\pm0.09}$ & $\mathbf{2.07\pm0.09}$ & 0.44 & 6.14 \\
Diffusion & FNO          & $4.27\pm0.44$ & $4.15\pm0.42$ & 0.23 & 3.55 \\
$(\ell_m, \ell_f)=(0.3, 0.2)$ & PCANet       & $31.35\pm0.76$ & $31.90\pm0.79$ & \textbf{0.02} & \textbf{0.12} \\
\midrule
Nonlinear & AMOR         & $\mathbf{10.87\pm0.39}$ & $\mathbf{10.97\pm0.37}$ & 0.44 & 6.12 \\
Diffusion & FNO          & $12.03\pm0.85$ & $11.16\pm0.77$ & 0.23 & 3.22\\
$(\ell_m, \ell_f)=(0.15, 0.1)$ & PCANet       & $57.05\pm0.64$ & $60.42\pm0.96$ & \textbf{0.02} & \textbf{0.21} \\
\midrule
\midrule
Helmholtz & AMOR       & $\mathbf{1.70\pm0.02}$ & $\mathbf{1.70\pm0.02}$ & 0.312 & 6.49 \\
& FNO         & $4.47\pm0.07$ & $5.09\pm0.07$ & 0.244 & 9.38 \\
$(\sigma, \rho)=(0.1, 0.6)$ & PCANet      & $3.24\pm0.05$ & $3.25\pm0.05$ & \textbf{0.090} & \textbf{3.66} \\
\midrule
Helmholtz & AMOR       & $\mathbf{5.18\pm0.05}$ & $\mathbf{5.12\pm0.04}$ & 0.296 & 6.49 \\
& FNO         & $10.61\pm0.43$ & $11.35\pm0.38$ & 0.235 & 9.30 \\
$(\sigma, \rho)=(0.2, 0.4)$ & PCANet      & $8.46\pm0.12$ & $8.43\pm0.12$ & \textbf{0.094} & \textbf{3.66} \\
\midrule
Helmholtz & AMOR       & $\mathbf{18.71\pm0.13}$ & $\mathbf{18.45\pm0.12}$ & 0.395 & 9.92 \\
& FNO         & $27.12\pm1.39$ & $27.87\pm1.54$ & 0.292 & 9.86 \\
$(\sigma, \rho)=(0.4, 0.2)$ & PCANet      & $26.03\pm0.48$ & $25.91\pm0.46$ & \textbf{0.091} & \textbf{3.94} \\
\midrule
\midrule
Kolmogorov & AMOR       & $7.88\pm0.19$  & $7.96\pm0.20$ & 0.092 & 27.17 \\
Flow & FNO         & $\mathbf{3.48\pm0.04}$ & $\mathbf{3.54\pm0.04}$ & 0.077 & 23.30 \\
$\mathrm{Re}=40$ & PCANet      & $35.06\pm0.07$ & $37.81\pm0.03$ & \textbf{0.029} & \textbf{7.48} \\
\midrule
Kolmogorov & AMOR       & $36.59\pm0.34$ & $32.60\pm0.33$ & 0.149 & 243.36 \\
Flow & FNO         & $\mathbf{28.90\pm0.20}$ & $\mathbf{26.07\pm0.17}$ & 0.111 & 164.54 \\
$\mathrm{Re}=500$ & PCANet      & $89.29\pm0.86$ & $86.28\pm1.72$ & \textbf{0.029} & \textbf{36.94} \\
\midrule
Kolmogorov & AMOR       & $50.29\pm0.37$ & $45.97\pm0.31$ & 0.731 & 102.29 \\
Flow & FNO         & $\mathbf{29.60\pm0.95}$ & $\mathbf{27.79\pm0.95}$ & 0.479 & 66.27 \\
$\mathrm{Re}=5000$ & PCANet      & $61.59\pm0.01$ & $59.02\pm0.01$ & \textbf{0.051} & \textbf{5.32} \\
\bottomrule
\end{tabular}
\ifarxiv\else
\endgroup
\fi
\end{table}
AMOR is more accurate than FNO on all nonlinear-diffusion and Helmholtz variants, with the smallest margin on the roughest diffusion variant ($\ell_m=0.15$), and more accurate than PCANet on every problem. Since AMOR and PCANet share the basis and the number of modes, the latter gap reflects AMOR's repeated nonlocal blocks and physical-space nonlinearities. AMOR's inference and training times are within a factor of about two of FNO's.

FNO is more accurate on all three Kolmogorov-flow variants. This periodic, statistically homogeneous flow matches FNO's translation equivariance (Remark~\ref{rem:equivariance}), and its one-step increments are poorly represented by a static snapshot basis. Learned and Laplacian bases reduce, but do not close, the gap (Table~\ref{tab:basis}).

\subsubsection{Choice of Basis}\label{subsec:basis}
The benchmarks above fix a PCA basis for fairness of comparison. 
AMOR is, however, agnostic to the choice of reduced basis: only the encode and decode maps depend on it, while the deep cross-basis architecture is unchanged. 
Table~\ref{tab:basis} replaces the PCA basis by a Laplacian eigenbasis, orthogonal polynomials, learned orthonormal bases, and derivative-informed subspaces (DIS)~\cite{olearyroseberry2024derivative} (omitted for Kolmogorov flow as the dataset generation code was not accessible to the authors), with all other hyperparameters fixed; only the encoder and decoder change.
\begin{table}[htbp]
\centering
\caption{AMOR basis comparison on three structured grid benchmark datasets. Best results in \textbf{bold}. Learned bases give the lowest errors throughout but increase training cost. The best performing precomputed basis is problem-dependent: On Nonlinear Diffusion $(\ell_m,\ell_f)=(0.3,0.2)$ polynomial bases work best, on Helmholtz $(\sigma,\rho)=(0.2,0.4)$ the DIS basis, and on Kolmogorov flow $\mathrm{Re}=40$ the Laplacian basis.}
\label{tab:basis}
\begin{tabular}{lcccccc}
\toprule
& \multicolumn{2}{c}{Nonlinear Diffusion} & \multicolumn{2}{c}{Helmholtz} & \multicolumn{2}{c}{Kolmogorov Re40} \\
\cmidrule(lr){2-3} \cmidrule(lr){4-5} \cmidrule(lr){6-7}
Basis & $L^1$ [\%] & $L^2$ [\%] & $L^1$ [\%] & $L^2$ [\%] & $L^1$ [\%] & $L^2$ [\%] \\
\midrule
PCA                         & $1.97\pm0.09$ & $2.07\pm0.09$ & $5.18\pm0.05$ & $5.12\pm0.04$ & $7.88\pm0.19$ & $7.96\pm0.20$ \\
Laplacian                   & $1.64\pm0.09$ & $1.72\pm0.08$ & $5.06\pm0.05$ & $5.04\pm0.06$ & $5.73\pm0.20$ & $5.85\pm0.21$ \\
Learned Cayley              & $1.39\pm0.04$ & $1.48\pm0.05$ & $3.21\pm0.05$ & $3.17\pm0.05$ & $5.16\pm0.31$ & $5.32\pm0.32$ \\
Learned QR                  & $\mathbf{1.35\pm0.04}$ & $\mathbf{1.44\pm0.05}$ & $\mathbf{3.09\pm0.03}$ & $\mathbf{3.05\pm0.03}$ & $\mathbf{4.89\pm0.18}$ & $\mathbf{5.01\pm0.18}$ \\
Jacobi                      & $1.64\pm0.05$ & $1.78\pm0.05$ & $4.97\pm0.02$ & $4.95\pm0.02$ & $7.33\pm0.18$ & $7.46\pm0.19$ \\
Chebyshev                   & $1.51\pm0.05$ & $1.63\pm0.05$ & $4.99\pm0.05$ & $4.96\pm0.06$ & $7.00\pm0.08$ & $7.12\pm0.09$ \\
Legendre                    & $1.52\pm0.06$ & $1.65\pm0.06$ & $4.99\pm0.05$ & $4.96\pm0.04$ & $6.97\pm0.09$ & $7.09\pm0.10$ \\
DIS                         & $1.72\pm0.08$ & $1.84\pm0.08$ & $3.31\pm0.05$ & $3.29\pm0.05$ & -- & -- \\
\bottomrule
\end{tabular}
\end{table}
The learned bases give the lowest errors on all three problems and reduce the PCA error by $19$--$42\%$, but they add trainable parameters and an orthonormalization in every training step, which increases the training time per epoch by a factor of $3.5$--$6.4$ in our runs. 
The basis is a modular, problem-matched component. Among the precomputed bases, which leave the cost unchanged, the best choice is problem-dependent: the Laplacian basis improves on PCA on all three problems, DIS nearly matches the learned bases on Helmholtz and on the nonlinear diffusion problem polynomial bases result in the best accuracy.

\subsubsection{Scaling to three dimensional problems}\label{subsec:3d}
We evaluate FNO and AMOR on two three-dimensional operator-learning benchmarks, a lid-driven cavity from the public FlowBench~\cite{tali2024flowBench} database and a linear Elastostatics problem as shown in Table~\ref{tab:scaling}.
\begin{table}[htbp]
\centering
\caption{Operator-learning performance of FNO and AMOR on the 3D FlowBench and Elastostatics benchmarks. Best results in \textbf{bold}. AMOR is competitive with FNO on both problems and is simultaneously more accurate and faster on Elastostatics, while on FlowBench, FNO is more accurate and faster.}
\label{tab:scaling}
\begin{tabular}{l  l  c c c c}
\toprule
Dataset & Model & $L^1$ & $L^2$ & Infer. & Train time \\
        &       &      [\%]       &      [\%]        & [ms/sample] & [s/epoch] \\
\midrule
\midrule
\multirow{2}{*}{FlowBench}
  & FNO  & $\mathbf{17.58\pm0.86}$ & $\mathbf{21.16\pm0.99}$ & \textbf{47} & \textbf{121} \\
  & AMOR & 18.60$\pm$0.75 & 22.33$\pm$0.75 & 133 & 245 \\
\midrule
\multirow{2}{*}{Elastostatics}
  & FNO & 2.41$\pm$0.02 & 2.31$\pm$0.03 & 6.1 & 28.3 \\
  & AMOR  & $\mathbf{1.37\pm0.03}$ & $\mathbf{1.53\pm0.04}$ & \textbf{5.3} & \textbf{17.2} \\
  \bottomrule
\end{tabular}
\end{table}
On the Elastostatics dataset we retain a PCA basis with modes set by retaining $99.9\%$ of explained variance. 
On FlowBench, by contrast, we use a Laplacian eigenbasis: the varying internal geometry (ellipsoids, toroids, boxes, cylinders) masks out a different interior region per sample, so the leading PCA mode alone already saturates any practical variance threshold without resolving the flow field whereas the Laplacian eigenbasis, fit to the mesh rather than to the data, is unaffected.

To probe this further, we measure inference and training time on the Elastostatics dataset as a function of the two hyperparameters that principally control model capacity -- the number of retained Fourier modes (FNO) or basis functions (AMOR), and the channel width -- since AMOR's coefficient map has no tensor-product dependence on the spatial dimension whereas FNO's spectral weight tensor grows as $m^d$ (Section~\ref{subsec:amor_architecture}, Table~\ref{tab:complexity}).
\begin{figure}[htbp]
    \centering
    \includegraphics[width=\linewidth]{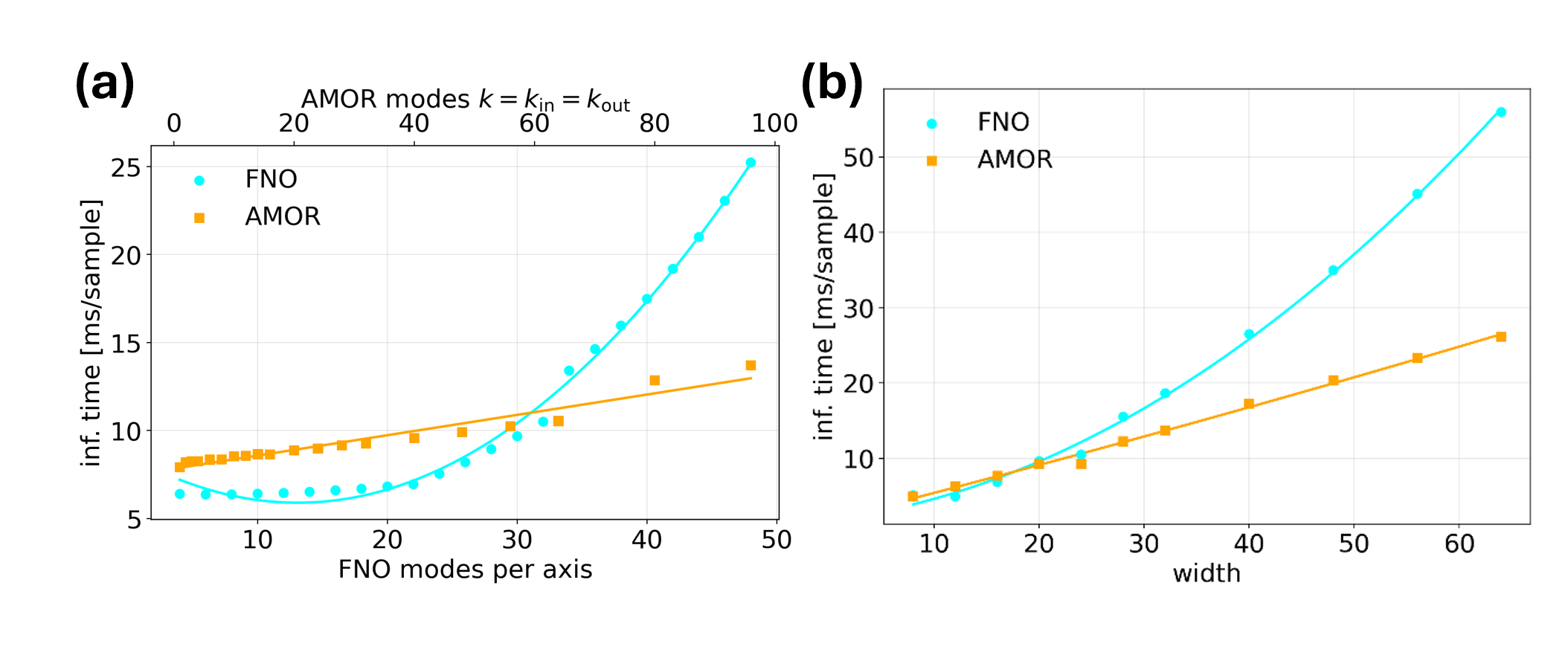}
    \caption{Inference time scaling of AMOR and FNO on the Elastostatics dataset. Scaling with respect to the number of modes per axis for $m = m_x = m_y = m_z$ for FNO and the number of input and output basis functions for $k = k_\mathrm{in} = k_\mathrm{out}$ for AMOR with all other hyperparameters and compute resources held fixed (a). Scaling with respect to the channel width for both AMOR and FNO with all other hyperparameters and compute resources held fixed (b). FNO scales cubically/quadratically with respect to modes/width, while AMOR's time grows approximately linearly/quadratically over the tested range (polynomial fits) shown as cyan/orange lines for FNO/AMOR. AMOR's cost grows markedly more slowly along both axes, so over the tested range its computational advantage over FNO widens toward higher modes and widths.}
    \label{fig:scaling}
\end{figure}
Figure~\ref{fig:scaling} (a) sweeps the number of modes at a fixed width of $24$.
FNO's inference time is flat at about $6.4$--$6.9$ ms/sample up to $m\approx22$ and then grows sharply, reaching $25.2$ ms/sample at $m=48$, consistent with the $m^3$ growth of its spectral weight tensor in three dimensions.
AMOR's time grows approximately linearly over the tested range, consistent with the $\mathcal O(d_vDk)$ transforms dominating the coefficient map.
Figure~\ref{fig:scaling} (b) sweeps channel width at a fixed mode/basis count of $32$ per axis for FNO/both input and output for AMOR.
Both costs grow with width, as expected from the quadratic channel-mixing term common to both architectures, but FNO grows markedly steeper: from $5.1$ to $55.9$ ms/sample between width $8$ and $64$ ($11\times$), against $5.0$ to $26.1$ ms/sample for AMOR ($5.3\times$).
Training-time scaling with modes and width follows the same qualitative pattern in a more pronounced way, because each optimization step adds a backward pass through the same $m^d$ spectral (FNO) or $k{\times}k$ reduced-basis (AMOR) weight tensors (see~\ref{si:3d}).
Note that each model is swept along its own hyperparameter axis; the counts are not matched in mode budget, since $m=32$ modes per axis retains $m^2(m/2+1)=17\,408$ Fourier coefficients per channel against $32$ reduced-basis functions for AMOR.
At equal $L^2$ reconstruction error for both inputs and outputs on this dataset, $m=32$ modes per axis for FNO corresponds to $k_{\mathrm{in}}=10$, $k_{\mathrm{out}}=5$ for AMOR, so the comparison shown is conservative.

Overall, AMOR's computational advantage over FNO is concentrated in the higher-mode, wider-network regime typically required for competitive accuracy in three dimensions.

\subsection{General Geometry Benchmarks}\label{subsec:irregular}

\subsubsection{Benchmark problems from Geo-FNO}
Standard FNO cannot be applied to the Airfoil and Elasticity problems because both involve irregular meshes with spatially varying node densities, precluding the use of the standard FFT.

We therefore compare AMOR against published results from Geo-FNO~\cite{li2023fourier} and DSE~\cite{lingsch2024beyond} as reference methods. Both approaches are explicitly designed to overcome the limitations of standard FNO on irregular meshes: Geo-FNO maps the irregular mesh to a latent uniform grid, whereas DSE employs a Fourier transform on non-equispaced points.

Results are collected in Table~\ref{tab:irregular}. In order to closely match the setup from Geo-FNO ~\cite{li2023fourier}, the training details are slightly different from the standard setup (see \ref{si:geofno} for details on the training setup).
\begin{table}[htbp]
\centering
\caption{Relative error [\%] (mean $\pm$ std over 5 seeds) on the airfoil dataset and the irregular elasticity problem (mesh size 972). AMOR reports both $L^1$ and $L^2$; other methods report only the metric available in their source. Best results in \textbf{bold}. AMOR achieves the lowest reported $L^2$ error on both the Airfoil and irregular-Elasticity datasets, outperforming Geo-FNO and other published baselines without requiring a structured-grid Fourier representation. Reported error norms follow the corresponding source methods.}
\label{tab:irregular}
\begin{tabular}{lcc}
\toprule
\textbf{Method} & \textbf{Airfoil} & \textbf{Elasticity} \\
\midrule
AMOR 
& $0.229 \pm 0.008$ ($L^1$)
& $\boldsymbol{0.98 \pm 0.02}$ ($L^1$)\\
 
& $\boldsymbol{0.957 \pm 0.017}$ ($L^2$)
& $\boldsymbol{1.32 \pm 0.02}$ ($L^2$) \\
\midrule
Geo-FNO~\cite{li2023fourier} 
& 1.38 ($L^2$) 
& 2.29 ($L^2$) \\
FNO interpolation~\cite{li2023fourier} 
& 4.21 ($L^2$) 
& 5.08 ($L^2$) \\
GraphNO~\cite{li2023fourier} 
& --- 
& 12.60 ($L^2$) \\
DeepONet~\cite{li2023fourier} 
& --- 
& 9.65 ($L^2$) \\
DSE~\cite{lingsch2024beyond} 
& \textbf{0.220} ($L^1$) 
& 1.73 ($L^1$) \\
\bottomrule
\end{tabular}

\end{table}
AMOR's average relative reduction in error over Geo-FNO -- $31\%$ on airfoil and $42\%$ on elasticity -- indicate that a data-driven reduced basis with deep cross-basis coupling remains competitive even on problems selected to showcase a structured-grid method's own geometry extension. On the Airfoil dataset, DSE performs slightly better than AMOR in relative $L^1$ error, while AMOR performs better on the Elasticity dataset. One possible reason is that AMOR minimizes the relative $L^2$ error of the Geo-FNO setup, whereas DSE is evaluated in relative $L^1$. The other published neural operator methods that accept irregular meshes as input (FNO interpolation, GraphNO, and DeepONet) all have significantly higher relative $L^2$ error.

\subsubsection{Elasticity of an aircraft rib}\label{subsec:rib}
We evaluate AMOR on the aircraft-rib tetrahedral mesh under three physically distinct loading regimes -- concentrated tip loading (\emph{cantilever\_tip}), distributed aerodynamic loading (\emph{distributed\_aero}), and localized multi-patch loading through fittings/brackets (\emph{multipatch\_bracket}) -- and their pooled union (\emph{mixed}, $40\,000$ samples). 

Standard FNO does not apply on this mesh; applying Geo-FNO would require an additional geometry-to-latent-domain mapping or domain-decomposition construction, which we do not pursue for this large 3D topology.
Message-passing graph operators remain applicable in principle, but with $n_{\mathrm{node}}=72{,}615$ (and $3n_{\mathrm{node}}$ displacement DOFs) their cost scales with the mesh graph rather than with a small reduced basis.
We therefore evaluate AMOR here and do not report Geo-FNO or other baselines on this problem.

Table~\ref{tab:rib} reports the resulting relative $L^1$/$L^2$ test errors.
\begin{table}[htbp]
\centering
\caption{AMOR test-set errors on the aircraft-rib mesh, per loading scenario and on the pooled mixed dataset. AMOR achieves good accuracy on all three load-cases, although the error increases with the complexity of the load scenario. FNO is not applicable to this geometry.}
\label{tab:rib}
\begin{tabular}{lcc}
\toprule
Scenario & $L^1$ [\%] & $L^2$ [\%] \\
\midrule
Cantilever tip load      & 0.04  & 0.04  \\
Distributed aero load    & 1.17  & 1.14  \\
Multipatch bracket load  & 10.40 & 10.94 \\
Mixed (pooled)           & 6.57  & 7.34  \\
\bottomrule
\end{tabular}
\end{table}
Error increases with loading complexity: sub-percent for the tip load, $1.17\%$ for the uniform aero load, and $10.4\%$ for the multipatch load, whose variable number of independently oriented patches makes the input--output map more variable. 
The best configurations retain only $k_{\mathrm{out}}\in\{2,5,30\}$ output modes against $3n_{\mathrm{node}}=217\,845$ displacement degrees of freedom, keeping both the offline basis construction and the online evaluation cheap.
A single model trained on the pooled dataset generalizes across all three regimes at a moderate accuracy cost relative to per-scenario models (Table~\ref{tab:rib}, last row), with multipatch loading remaining the dominant error source (\ref{si:rib}). 
Among the pre-computed bases tested on this mesh, PCA and Laplacian are the most accurate; the learned Cayley basis is slightly more accurate but costlier, and we use PCA throughout this section; the full comparison is reported in~\ref{si:rib_basis}.

\subsection{Transfer to unseen meshes}\label{subsec:transfer}
With the dual encoder of Section~\ref{subsec:meshes}, trained models are evaluated on unseen meshes without retraining. For nonlinear diffusion $(\ell_m,\ell_f)=(0.3,0.2)$ trained on a $100\times100$ grid, the relative $L^1$ test error is $2.00\%$, $1.87\%$, and $1.88\%$ on $50^2$, $150^2$, and $200^2$ grids, compared with $1.91\%$ on the training grid. The cantilever-tip rib model, transferred from its training mesh to three finer meshes, is more accurate than prolongation of the coarse finite-element solution on all three ($9.45$--$15.02\%$ versus $9.79$--$17.16\%$).
More details can be found in~\ref{si:mesh_generalization} and ~\ref{si:rib_mesh_transfer}.

\section{Limitations}\label{sec:limitations}

When AMOR uses a snapshot-derived basis, its main limitations trace back to that choice.

When the solution manifold requires many global linear modes -- transport-dominated problems, moving fronts, strong multiscale coupling -- the variance-based truncation that makes AMOR efficient can require so many modes that the efficiency of the reduced representation is lost. 
Kolmogorov flow illustrates this concretely. 
Even when the network predicts an Euler residual and the output PCA is fit to $\Delta u$, a fixed global basis still trails FNO's Fourier modes on this periodic, transport-dominated map. 
Closing the remainder likely needs dynamics-aware constructions beyond a single static spatial basis (trajectory-window PCA, jointly trained nonlinear encoders, or multi-step losses).
Learned and Laplacian bases narrow this gap but do not close it (Table~\ref{tab:basis}).
This is a limitation of the number of modes needed for a fixed global basis, not of sharp features per se: AMOR handles discontinuities well when a moderate number of modes captures them.

A related set of limitations concerns the basis's dependence on training data. 
Fitting input and output bases separately, while helpful when the two manifolds differ, can miss structure that is simple for the operator but not for individual snapshots, as in bifurcating or composed families.
Because $\mathcal{K}_{\mathrm{RB}}^{(\ell)}$ already couples all retained input and output modes (Eq.~\eqref{eq:transform}), this is a property of how the bases are fit rather than of the architecture, and a joint or coupled fitting objective -- e.g.\ a coupled eigenproblem or joint SVD over paired input--output snapshots, in place of separate PCA on each side -- is a direct extension within the existing basis taxonomy that we have not yet tested.
The fixed, data-independent bases (Laplacian, polynomial) do not need to be fit to training data at all, and are a natural fallback when training data is scarce.

Mesh transfer requires a one-time basis prolongation and construction of the target-mesh Gram-weighted dual encoder 
-- a step that FNO's resolution-independent Fourier basis does not require. 
In the settings studied here this step is cheap and transfer accuracy is strong, so it is a procedural cost rather than a failure of the design.

Finally, each reduced-basis block carries a dense coefficient tensor $T^{(\ell)}\in\R^{k_{\mathrm{dec}}\times k_{\mathrm{enc}}\times d_v\times d_v}$ (Table~\ref{tab:complexity}), which --- when not using the factorized version --- typically gives AMOR more parameters than FNO on the two-dimensional benchmarks studied here, where an adequate PCA truncation retains more modes than FNO's per-axis cutoff. 
Because the AMOR block does not grow with spatial dimension while FNO's mode tensor scales as $m^d$, this ordering narrows and can reverse in three dimensions; despite the larger count in two dimensions we observe no significant difference in training or inference time, though inference memory on constrained hardware can still favour FNO.

\section{Conclusions}\label{sec:outlook}

We introduce AMOR, an operator-learning architecture that replaces FNO's Fourier-based spectral convolution with reduced-basis blocks usable on arbitrary meshes, admits arbitrary and separate input/output bases through a common encode/decode interface, and, for bases that contain the constant function and separate points, is a universal approximator of continuous operators.
On benchmark problems, AMOR is competitive with FNO and its variants at comparable computational cost while extending directly to non-tensor product meshes.

One open question follows directly from our discussion: 
For time-dependent, transport-dominated dynamics, we view constructions that adapt to the dynamics -- PCA fit on trajectory windows or state increments, nonlinear encoders trained jointly with the operator, coupled learning of basis and operator, or multi-step losses -- as the necessary next step, rather than a better static spatial basis alone. Reducing the offline and coefficient-tensor cost further, via low-rank or tensor-structured coefficient maps and randomized or subsampled PCA fitting, would complement either direction.

Many industry-relevant problems require complex or adaptive meshes that structured-grid methods cannot accommodate; our aircraft-rib experiments show AMOR operating directly on such meshes — a highly irregular 3D finite-element mesh with tens of thousands of nodes — with no re-meshing and no change to the learned weights across resolutions. 
This makes AMOR a natural candidate for 3D industrial surrogate modeling (crashworthiness, patient-specific hemodynamics, additive-manufacturing process optimization) wherever an existing simulation database sits on a tetrahedral, hexahedral, or even completely unstructured mesh — where standard FFT-based FNO needs additional geometric handling.

\section*{Code and Data availability}
Code is publicly available at \url{https://github.com/leonardvetter/AMOR/}.\\
The nonlinear diffusion dataset, the 3D elastostatics dataset as well as the aircraft rib datasets were generated by the authors; the Helmholtz dataset was generated using a data generation pipeline
based on the \texttt{hIPPYflow} library~\cite{olearyroseberry2021hippyflow}.
All of the above are publicly available on Zenodo~\cite{zenodo2026datasets}.
The FlowBench dataset~\cite{tali2024flowBench} is publicly available at \url{https://huggingface.co/datasets/BGLab/FlowBench}
The Kolmogorov flow datasets at $\mathrm{Re}=40$, $500$, and $5000$
were introduced in~\cite{li2022mno} and are publicly available on
Zenodo~\cite{li2022mno_zenodo}.
The airfoil (NACA) and elasticity datasets were introduced
in~\cite{li2023fourier} and are available at
\url{https://drive.google.com/drive/folders/1YBuaoTdOSr_qzaow-G-iwvbUI7fiUzu8}.

\ifarxiv\else
\section*{CRediT authorship contribution statement}
\textbf{David Voderholzer:} Conceptualization, Methodology, Software, Formal analysis, Investigation, Data curation, Visualization, Writing -- original draft, Writing -- review \& editing. \textbf{Leonard Vetter:} Conceptualization, Methodology, Software, Formal analysis, Investigation, Data curation, Visualization, Writing -- original draft, Writing -- review \& editing. \textbf{Thomas O'Leary-Roseberry:} Conceptualization, Resources, Writing -- review \& editing.
\fi

\section*{Declaration of competing interest}
The authors declare that they have no known competing financial interests or personal relationships that could have appeared to influence the work reported in this paper.

\ifarxiv\else
\section*{Declaration of generative AI and AI-assisted technologies in the manuscript preparation process}
During the preparation of this work the authors used a large language model (LLM) in order to review the manuscript for consistency, mathematical correctness, and improve language. After using this tool, the authors reviewed and edited the content as needed and take full responsibility for the content of the published article.
\fi

\section*{Acknowledgements}
The authors acknowledge support by the state of Baden-Württemberg through bwHPC as well as the computational resources provided by the Ohio Supercomputer Centre (OSC).

\appendix

\bibliographystyle{elsarticle-num}
\bibliography{local.bib}

\section{Extended Theory}\label{si:extended_theory}

\subsection{FNO as a Neural Operator}\label{si:fno}
In the Fourier Neural Operator (FNO)~\cite{li2021fourier}, the kernel $\kappa^{(\ell)}$ is parameterized in Fourier space: the input is transformed via the Fast Fourier Transform (FFT), a matrix $\Hat{P}^{(\ell)}_{\mathbf k}\in\mathbb C^{d_v\times d_v}$ is applied to each retained Fourier mode ($m$ per axis), and the result is transformed back. Schematically, the FNO spectral convolution takes the form
\begin{equation}\label{eq:fno_layer}
    \big(\mathcal{K}_{\mathrm{FNO}}^{(\ell)} h\big)(x) = \mathcal{F}^{-1}\!\Big(\Hat{P}^{(\ell)} \cdot \big(\mathcal{F} h\big)\Big)(x),
\end{equation}
where $\mathcal{F}$ and $\mathcal{F}^{-1}$ denote the (truncated) Fourier transform and its inverse, and $\Hat{P}^{(\ell)}$ acts on each retained mode.
This parametrization exploits the convolution theorem to achieve global receptive fields efficiently, but the standard FFT implementation assumes an equispaced grid.
Geo-FNO~\cite{li2023fourier} relaxes this assumption by learning a deformation from the physical domain to a regular latent grid, on which the FNO acts.

\subsection{Comparison with universality results for FNO, PODNO and PCA-Net}\label{si:universality_comparison}
The universality of FNO was first established in~\cite{kovachki2021universal} for periodic domains, requiring an unbounded number of Fourier modes in the limit. The ANO framework of~\cite{lanthaler2025nonlocality} sharpened this to show that a \emph{single} Fourier mode (the spatial average) suffices, at the cost of increased channel width; a single averaging layer suffices.

For PODNO~\cite{cheng2026podno}, universality is established for the generalized spectral operator (GSO): adapting the constructive FNO proof of~\cite{kovachki2021universal}, GSO is shown to approximate continuous operators provided the activation is non-polynomial and globally Lipschitz and, critically, the basis functions defining the inverse reduced-basis transform, PODNO's $\Pi_N^{-1}$, lie in an appropriate Sobolev space, whereas Theorem~\ref{thm:AMOR_universality} instead requires continuous bases that separate points and contain the constant function.

For PCA-Net~\cite{bhattacharya2021model}, universality holds in the Bochner $L^2(\mu)$-sense (with respect to an input measure $\mu$) but is restricted to a single encoding-decoding step with nonlinearities only in the latent space.
Lanthaler~\cite{JMLR:v24:23-0478} strengthened this result, proving universality under minimal assumptions (continuity of the operator and finite second moments), but also established lower complexity bounds showing that PCA-Net faces a "curse of parametric complexity": for general Lipschitz operators, the required network size within the single encoding-decoding step can grow exponentially with the PCA truncation dimension; whether deeper compositions such as AMOR avoid this bound is open.
When considering a simple one layer AMOR with added nonlinearities inside the basis block, the architecture also inherits the universal approximation property of PCA-Net. 
However, adding nonlinearities inside of the reduced basis block also led empirically to worse performance on the considered datasets.

AMOR occupies a middle ground: like PCA-Net it can use data-driven bases on arbitrary discretizations, but like FNO it employs deep compositions of nonlocal and nonlinear layers.

\subsection{FiLM Conditioning}\label{si:film}
Feature-wise Linear Modulation~\cite{perez2017filmvisualreasoninggeneral} (FiLM) is defined in Section~\ref{subsec:amor_architecture}. 
FiLM does not change the reduced-basis kernel or the pointwise skip $W^{(\ell)}$.

On the aircraft-rib problems we use FiLM only for scalar/low-dimensional load metadata that is not already a nodal field:
\begin{itemize}
  \item \emph{Cantilever tip:} $\pi\in\R$ is the scalar tip load ($d_{\mathrm{film}}=64$).
  \item \emph{Distributed aero / multipatch bracket / mixed:} $\bm\pi$ concatenates Poisson ratio $\nu$ and a discrete case/patch indicator; the traction enters as a nodal input field.
\end{itemize}
Structured-grid benchmarks in Table~\ref{tab:benchmark} do not use FiLM.

\subsection{Proof and Technical Details for Universal Approximation}\label{si:universality}
We prove Theorem~\ref{thm:AMOR_universality} by separating the approximation
argument from its realization in AMOR.
The notation and assumptions of the main text remain in force.

\subsubsection{Notation and Standing Assumptions}\label{si:universality_notation}
We equip $X_C=\bar\Omega\times\{1,\ldots,C\}$ with the discrete topology on the
component index and the product of Lebesgue and counting measures. The boundary
has zero measure. Identifying $f\colon\Omega\to\mathbb R^C$ with
$(x,c)\mapsto f_c(x)$ gives an isometry $L^2(\Omega;\mathbb R^C)\cong L^2(X_C)$.
We abbreviate $X_{C_{\mathrm{in}}}$ and $X_{C_{\mathrm{out}}}$ by
$X_{\mathrm{in}}$ and $X_{\mathrm{out}}$.
For a $C$-component field, let
\[
 \langle f\rangle_C=\frac{1}{C|\Omega|}\sum_{c=1}^C\int_\Omega f_c(x)\,\mathrm dx,
 \qquad \nu_C=\sqrt{C|\Omega|},\qquad
 e_{\mathbf1,C}=\mathbf1/\nu_C.
\]
Averages of multichannel fields are taken channelwise. We omit $C$ when the space
is clear, and use $|\cdot|_\infty$ for the maximum norm of a finite-dimensional vector.
In particular, our convention for vector-valued inputs is
\[
 \|a\|_{L^\infty}
   =\operatorname*{ess\,sup}_{x\in\Omega}\max_{1\le c\le C_{\mathrm{in}}}|a_c(x)|.
\]

We use the mode-value maps from \eqref{eq:mode_map} and assume without loss of generality that $M_\infty>0$.
Throughout the proof, \eqref{eq:sigma_assump} supplies a globally Lipschitz,
nonpolynomial activation with $\sigma(0)=0$ that is strictly monotone on
a nonempty open interval $\mathcal I$. No differentiability of $\sigma$ is used.

\subsubsection{Approximation Tools}\label{si:universality_tools}
We use four tools: finite-rank approximation of a continuous operator on a
compact set, the ANO averaging result adapted from Lanthaler et al. \cite{lanthaler2025nonlocality}, a lemma that shows that the modes can act as positional encoding, and the standard density theorem for shallow
networks with continuous nonpolynomial activation~\cite{leshno1993multilayer}.
The latter applies on any compact subset of a finite-dimensional Euclidean space.

\begin{proposition}[Finite-rank reduction with continuous output functions]\label{prop:reduction}
Let $K\subset\mathcal U$ be compact and $\Psi^\dagger\colon K\to\mathcal V$ continuous. 
For every $\delta>0$, there are $J\in\mathbb N$, functions $\eta_1,\ldots,\eta_J\in C(\bar\Omega;\mathbb R^{C_{\mathrm{out}}})$, and continuous functionals $\alpha_j\colon L^1(\Omega;\mathbb R^{C_{\mathrm{in}}})\to\mathbb R$ such that
\begin{equation}\label{eq:via_step1}
 \sup_{a\in K}\left\|\Psi^\dagger(a)-\sum_{j=1}^J\alpha_j(a)\eta_j\right\|_{\mathcal V}
 \le\delta.
\end{equation}
\end{proposition}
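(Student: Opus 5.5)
The plan is to combine compactness of the image $\Psi^\dagger(K)$ in $\mathcal V$ with a partition of unity on $K$. The one subtle point is that the functionals $\alpha_j$ must be continuous on all of $L^1$, not only on $K$.

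\emph{Step 1: choose finitely many centers.} Since $\Psi^\dagger$ is continuous and $K$ is compact, $\Psi^\dagger$ is uniformly continuous on $K$. Choose $r>0$ such that $\|a-a'\|_{L^2}<2r$ with $a,a'\in K$ implies $\|\Psi^\dagger(a)-\Psi^\dagger(a')\|_{\mathcal V}\le\delta/2$. Cover $K$ by finitely many $L^2$-balls $B(a_j,r)$, $j=1,\ldots,J$, with centers $a_j\in K$.

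\emph{Step 2: continuous output functions.} The values $\Psi^\dagger(a_j)$ lie only in $L^2$. Since $C(\bar\Omega;\mathbb R^{C_{\mathrm{out}}})$ is dense in $\mathcal V$, pick $\eta_j$ continuous with $\|\eta_j-\Psi^\dagger(a_j)\|_{\mathcal V}\le\delta/2$.

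\emph{Step 3: the weights, and the main obstacle.} The natural weights are built from the $L^2$ distance, but they must be continuous for the $L^1$ topology on all of $L^1$. I would use the uniform bound $\|a\|_{L^\infty}\le M_\infty$ on $K$, which the theorem assumes, and handle it through a clipping map. Let $\tau(a)=\max(-M_\infty,\min(a,M_\infty))$, applied componentwise and pointwise.

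The map $\tau$ is $1$-Lipschitz from $L^1$ to $L^1$ and is the identity on $K$. It maps $L^1$ into the $L^\infty$-ball of radius $M_\infty$. On that ball, $\|f\|_{L^2}^2\le 2M_\infty\|f\|_{L^1}$ for differences $f$ of such functions, so $\|f\|_{L^2}\le(2M_\infty\|f\|_{L^1})^{1/2}$. Because $|\Omega|<\infty$, the centers $a_j$ lie in $L^1$.

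Hence $d_j(a)=\|\tau(a)-a_j\|_{L^2}$ is continuous on $L^1$. Define
\[
\beta_j(a)=\max(0,\,2r-d_j(a)),\qquad \alpha_j(a)=\frac{\beta_j(a)}{\max\bigl(r,\sum_i\beta_i(a)\bigr)}.
\]
These are continuous on $L^1$. For $a\in K$, some $d_i(a)<r$, so $\sum_i\beta_i(a)\ge\beta_i(a)>r$ and the denominator equals the sum. Therefore the $\alpha_j(a)$ form a partition of unity supported on the indices with $\|a-a_j\|<2r$.

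\emph{Step 4: the estimate.} For $a\in K$, write
\[
\Psi^\dagger(a)-\sum_j\alpha_j(a)\eta_j=\sum_j\alpha_j(a)\bigl[\Psi^\dagger(a)-\Psi^\dagger(a_j)+\Psi^\dagger(a_j)-\eta_j\bigr].
\]
Each bracket with $\alpha_j(a)\ne0$ has norm at most $\delta/2+\delta/2$. The weights are nonnegative and sum to one, so the total error is at most $\delta$, which gives the bound in the proposition.
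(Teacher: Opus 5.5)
Your Step~3 uses a hypothesis the proposition does not have. The statement assumes only that $K\subset\mathcal U$ is compact in $L^2$. The uniform bound $\|a\|_{L^\infty}\le M_\infty$ belongs to Theorem~\ref{thm:AMOR_universality}, and the paper's proof of this proposition never uses it. Without that bound, no clipping level makes $\tau$ the identity on $K$, so your weights are no longer a partition of unity subordinate to the cover. The unclipped distance $a\mapsto\|a-a_j\|_{L^2}$ is no substitute: it is infinite for $a\in L^1\setminus L^2$ and is not $L^1$-continuous. As written, you have therefore proved the proposition only under the theorem's extra assumption. That covers its one use in the paper, but not the stated result.

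The missing observation is the one the paper's proof relies on. Because $\Omega$ is bounded, the identity $(K,\|\cdot\|_{L^2})\to(K,\|\cdot\|_{L^1})$ is a continuous bijection from a compact space onto a Hausdorff space, hence a homeomorphism. So $K$ is $L^1$-compact, and $\Psi^\dagger$ is uniformly continuous for the $L^1$ metric on $K$. Choose $r$ and the finite cover in that metric and set $d_j(a)=\|a-a_j\|_{L^1}$, which is $1$-Lipschitz on all of $L^1$. The clipping then disappears, and your Steps~1, 2 and~4 go through unchanged.

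With that repair, your argument takes a genuinely different route from the paper's. The paper proceeds as follows:
\begin{itemize}
\item it builds a complete orthonormal system of continuous functions by Gram--Schmidt on a countable dense subset of $C(\bar\Omega;\mathbb R^{C_{\mathrm{out}}})$;
\item it uses that $\Pi_J\to\mathrm{Id}$ uniformly on the compact image $\Psi^\dagger(K)$;
\item it sets $\alpha_j(a)=\langle\Psi^\dagger(a),\eta_j\rangle$ on $K$;
\item it extends these functionals to all of $L^1$ by Tietze, after the same homeomorphism argument.
\end{itemize}
Its $\eta_j$ are orthonormal and $\sum_j\alpha_j(a)\eta_j=\Pi_J\Psi^\dagger(a)$ exactly on $K$, but the extension is non-constructive. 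Your partition of unity avoids Tietze and gives explicit, nonnegative, Lipschitz functionals. The price is that your $\eta_j$ are merely approximations of sampled outputs, not orthonormal, and $J$ is a covering number rather than a projection-truncation index. Neither difference matters downstream: the proof of Theorem~\ref{thm:AMOR_universality} uses only continuity of the $\alpha_j$ on $L^1$, continuity of the $\eta_j$ on $\bar\Omega$, and the constant $\sum_j\|\eta_j\|_{\mathcal V}$.
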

The functions $\eta_j$ are auxiliary output functions, not modes of the fixed AMOR output basis. 
The nonlinear decoder will approximate them using mode values.
\begin{proof}
This is the $L^2$ version of \cite[Prop.~A.8]{lanthaler2025nonlocality},
adapted to a target that is continuous only on $K$.
Choose a countable subset of
$C(\bar\Omega;\mathbb R^{C_{\mathrm{out}}})$ that is dense in $\mathcal V$.
Gram--Schmidt, with dependent elements discarded, gives a complete
orthonormal system $(\eta_j)_{j\ge1}$ whose elements are continuous on
$\bar\Omega$.
Its orthogonal projections $\Pi_J$ converge strongly to the identity. 
Since $\Psi^\dagger(K)$ is compact and $\|\Pi_J\|\le1$, this convergence is uniform on $\Psi^\dagger(K)$. Choose $J$ so that $\sup_{a\in K}\|\Psi^\dagger(a)-\Pi_J\Psi^\dagger(a)\|\le\delta$.

On $K$, define $\alpha_j(a)=\langle\Psi^\dagger(a),\eta_j\rangle$.
These functionals are continuous in the $L^2$ topology. The identity map from $K$ with its $L^2$ topology to $K$ with its $L^1$ topology is a continuous bijection from a compact space to a Hausdorff space, hence a homeomorphism. 
Therefore the $\alpha_j$ are also $L^1$-continuous on $K$. 
The set $K$ is compact, and thus closed, in $L^1$. 
Tietze's theorem extends each $\alpha_j$ continuously to all of $L^1$.
On $K$, the finite sum in \eqref{eq:via_step1} equals exactly $\Pi_J\Psi^\dagger(a)$.
\end{proof}

The next proposition approximates a functional by a local feature map
$R\colon\mathbb R\times\overline{\Omega'}\to\mathbb R^{d_w}$,
an average, and a readout $\mathcal G\colon\mathbb R^{d_w}\to\mathbb R^J$.
Both maps and the feature dimension are chosen for the target and tolerance.

\begin{proposition}[Encoding by averages]\label{prop:ano_encoder}
Let $\Omega'\subset\mathbb R^d$ be a nonempty bounded, possibly disconnected, open set with Lipschitz boundary, $\alpha\colon L^1(\Omega')\to\mathbb R^J$ continuous, and $K'\subset L^1(\Omega')$ compact with $\sup_{u\in K'}\|u\|_{L^\infty}<\infty$. 
For every $\varepsilon_1>0$ there are $d_w\in\mathbb N$ and continuous maps $R\colon\mathbb R\times\overline{\Omega'}\to\mathbb R^{d_w}$, $\mathcal G\colon\mathbb R^{d_w}\to\mathbb R^J$ such that
\[
 \sup_{u\in K'}\Bigl|\alpha(u)-\mathcal G\Bigl(\tfrac{1}{|\Omega'|}\int_{\Omega'}R(u(z),z)\,\mathrm dz\Bigr)\Bigr|_\infty\le\varepsilon_1 .
\]
\end{proposition}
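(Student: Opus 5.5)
The plan follows the route of \cite[Lemma~A.9, Remark~A.10]{lanthaler2025nonlocality}. A continuous functional on a compact set of uniformly bounded functions is first approximated by a functional that depends only on finitely many $L^1$-continuous ``moments'' of the input. Those moments are then written as averages of a continuous local feature of $(u(z),z)$.

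\textbf{Step 1: reduction to finitely many moments.} The set $K'$ is compact in $L^1(\Omega')$, so $\alpha$ is uniformly continuous on it. Choose $\delta>0$ with $|\alpha(u)-\alpha(v)|_\infty\le\varepsilon_1/2$ whenever $\|u-v\|_{L^1}\le\delta$ and $u,v\in K'$. Next, partition $\Omega'$ (up to a null set) into finitely many cells $Q_1,\dots,Q_P$, for example intersections with a fine dyadic grid. Let $P_h u=\sum_p \bigl(|Q_p|^{-1}\int_{Q_p}u\bigr)\mathbf 1_{Q_p}$ be the piecewise-constant averaging operator. It is an $L^1$ contraction and converges strongly to the identity as the mesh is refined. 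By compactness of $K'$, this convergence is uniform on $K'$, so we may fix $h$ with $\sup_{u\in K'}\|u-P_hu\|_{L^1}\le\delta$.

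The set $P_h(K')$ is not contained in $K'$, so we cannot apply $\alpha$ to it directly. Instead we use a Tietze extension of $\alpha$ from $K'\cup P_h(K')$, or simply apply the continuity of $\alpha$ on all of $L^1$, which the statement grants. The second option gives $\sup_{u\in K'}|\alpha(u)-\alpha(P_hu)|_\infty\le\varepsilon_1/2$ after possibly shrinking $\delta$ on the compact set $K'\cup P_h(K')$. The map $u\mapsto\alpha(P_hu)=:\mathcal A(m_1(u),\dots,m_P(u))$ factors through the cell averages $m_p(u)=|Q_p|^{-1}\int_{Q_p}u$. Here $\mathcal A\colon\mathbb R^P\to\mathbb R^J$ is continuous, since $m\mapsto\sum_p m_p\mathbf 1_{Q_p}$ is continuous into $L^1$.

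\textbf{Step 2: realization as averages of a continuous local feature.} We want $m_p(u)$ to be equal, up to a small error, to $|\Omega'|^{-1}\int_{\Omega'}R_p(u(z),z)\,\mathrm dz$ with $R_p$ continuous. Indicators are discontinuous, so we pick continuous cutoffs $\chi_p\in C(\overline{\Omega'})$ with $0\le\chi_p\le1$ and $\|\chi_p-\mathbf 1_{Q_p}\|_{L^1}$ small. We then set
\[
R_p(s,z)=\frac{|\Omega'|}{|Q_p|}\,\chi_p(z)\,s .
\]
The uniform $L^\infty$ bound $M$ on $K'$ gives an error of at most $M|\Omega'|\,|Q_p|^{-1}\|\chi_p-\mathbf 1_{Q_p}\|_{L^1}$, uniformly over $u\in K'$. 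The set of exact moment vectors $\{(m_p(u))_p:u\in K'\}$ lies in the compact cube $[-M,M]^P$. Since $\mathcal A$ is uniformly continuous on a slightly enlarged cube, we can choose the cutoffs fine enough that the perturbed moments change $\mathcal A$ by at most $\varepsilon_1/2$. With $d_w=P$, $R=(R_p)_p$ and $\mathcal G=\mathcal A$, the triangle inequality gives the claim.

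\textbf{Main obstacle.} The delicate point is Step 1. We must ensure that $\alpha$ is evaluated at $P_hu$, which lies outside $K'$, in a controlled way. This needs uniform continuity of $\alpha$ on the compact set $K'\cup\bigcup_{h\le h_0}P_h(K')$, whose compactness follows from uniform convergence $P_h\to I$ on $K'$. The remaining steps, constructing continuous cutoffs on a Lipschitz, possibly disconnected, domain and using a uniform continuity modulus for $\mathcal A$, are routine.
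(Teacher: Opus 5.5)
Your proof is correct, but it follows a different route from the paper. The paper gives no independent argument. It invokes \cite[Lemma~A.9, Remark~A.10]{lanthaler2025nonlocality} for $J=1$, noting only two things: the zero-extension to a bounding box used there does not need connectedness, and only continuity of the resulting neural-network $R$ and of $\mathcal G=q_1\circ\sigma_0$ is used afterwards. It then handles $J>1$ by applying the scalar case componentwise and concatenating the features.

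You instead give a self-contained argument. The conditional expectation $P_h$ onto a finite partition reduces $\alpha$ to a continuous function $\mathcal A$ of finitely many cell averages. Each cell average is then realized, up to a uniformly small error, as the average of the local feature $R_p(s,z)=\frac{|\Omega'|}{|Q_p|}\chi_p(z)s$, which is linear in the input value.

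What your route buys:
\begin{itemize}
\item It treats vector-valued $\alpha$ in one pass.
\item It makes transparent that all the nonlinearity can be placed in $\mathcal G$.
\item It never uses the Lipschitz boundary or connectedness. The $L^\infty$ bound could even be replaced by the uniform integrability of $L^1$-compact sets.
\end{itemize}
Nothing is lost downstream, because Lemma~\ref{lem:modal_features} re-approximates $R$ by a shallow network anyway. The citation route, in exchange, costs one line and inherits a neural-network form for $R$ and $\mathcal G$ that the paper then discards.

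Three small repairs are needed.
\begin{itemize}
\item In Step~1, ``shrinking $\delta$ on the compact set $K'\cup P_h(K')$'' is circular as written, since $h$ is chosen after $\delta$. Your ``main obstacle'' version, with a fixed $h_0$ and dyadic $h\le h_0$, fixes this. Simpler still is the standard subsequence fact: continuity of $\alpha$ on $L^1$ and compactness of $K'$ give $\delta>0$ such that $|\alpha(u)-\alpha(v)|_\infty\le\varepsilon_1/2$ for all $u\in K'$ and all $v\in L^1$ with $\|u-v\|_{L^1}<\delta$.
\item The Step~2 error bound should read $M|Q_p|^{-1}\|\chi_p-\mathbf 1_{Q_p}\|_{L^1}$, without the factor $|\Omega'|$.
\item Cells of measure zero should be discarded before you form $|Q_p|^{-1}$.
\end{itemize}
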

\begin{proof}
For $J=1$ this is \cite[Lemma~A.9, Remark~A.10]{lanthaler2025nonlocality}, whose proof extends inputs by zero to a bounding box and does not use connectedness. 
There, $R$ is a neural network and $\mathcal G=q_1\circ\sigma_0$ with a smooth, nonpolynomial, Lipschitz activation $\sigma_0$. 
We use only the continuity of $R$ and $\mathcal G$, so $\sigma_0$ need not equal $\sigma$; this admits nonsmooth activations such as ReLU in Theorem~\ref{thm:AMOR_universality}. 
For $J>1$, apply the case $J=1$ componentwise with tolerance $\varepsilon_1$ and concatenate the features.
\end{proof}

The next lemma shows why continuous separating mode values can replace explicit coordinates.

\begin{lemma}[Modes as positional encoding]\label{lem:positional}
Let $X$ be a compact metric space and $\Theta\colon X\to\mathbb R^{r_1}$ continuous
and injective. Then, every continuous $f\colon X\to\mathbb R^{r_2}$ can be written as
$f=g\circ\Theta$ for a continuous $g\colon\mathbb R^{r_1}\to\mathbb R^{r_2}$.
\end{lemma}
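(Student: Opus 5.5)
The plan is the standard compactness argument combined with Tietze's extension theorem. Set $Y=\Theta(X)\subset\mathbb R^{r_1}$. Since $X$ is compact and $\Theta$ is continuous, $Y$ is compact, hence closed in $\mathbb R^{r_1}$. The map $\Theta\colon X\to Y$ is a continuous bijection from a compact space onto a Hausdorff space, so it is a homeomorphism. This is the same closed-map argument already used in the proof of Proposition~\ref{prop:reduction}: a closed subset of $X$ is compact, its image is compact, and compact subsets of a Hausdorff space are closed. Consequently $\Theta^{-1}\colon Y\to X$ is continuous.

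Next define $g_0=f\circ\Theta^{-1}\colon Y\to\mathbb R^{r_2}$. It is continuous as a composition of continuous maps, and it satisfies $g_0\circ\Theta=f$ on $X$ by construction. It remains to extend $g_0$ from the closed set $Y$ to all of $\mathbb R^{r_1}$.

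For the extension, apply Tietze's extension theorem, valid since $\mathbb R^{r_1}$ is a metric space and hence normal, to each of the $r_2$ real components of $g_0$ separately. Collecting the extended components gives a continuous $g\colon\mathbb R^{r_1}\to\mathbb R^{r_2}$ with $g|_Y=g_0$, and therefore $f=g\circ\Theta$. If desired, the extension can be chosen bounded by $\sup_X|f|_\infty$ componentwise, which is convenient when $g$ is later approximated on a compact set by a shallow network~\cite{leshno1993multilayer}.

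None of these steps is a genuine obstacle. The only point needing care is that continuity of $\Theta^{-1}$ really requires compactness of $X$; injectivity alone would not suffice. In the application, $X$ will be $X_{\mathrm{out}}$ and $\Theta$ the map $\mathbf p^{\mathrm{out}}$, with the discrete topology on the component index. This makes $X$ a finite disjoint union of copies of $\bar\Omega$, which is compact. Moreover, $\mathbf p^{\mathrm{out}}$ is continuous because the modes have continuous representatives, and it is injective by the point-separation hypothesis.
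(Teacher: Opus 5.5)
Your proof is correct and matches the paper's argument: compactness makes $\Theta$ a homeomorphism onto the closed set $\Theta(X)$, and Tietze's theorem extends $f\circ\Theta^{-1}$ componentwise to all of $\mathbb R^{r_1}$. One small aside on your closing remark: the paper does not apply the lemma to $\mathbf p^{\mathrm{out}}$ itself, but to $\Lambda=\sigma(t_0+t'\mathbf p^{\mathrm{out}})$ and to $(\tau,y,c)\mapsto(\tau,\mathbf p^{\mathrm{in}}(y,c))$ on $[-M_\infty,M_\infty]\times X_{\mathrm{in}}$, though the same compactness and injectivity reasoning covers both.
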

\begin{proof}
The map $\Theta\colon X\to\Theta(X)$ is a homeomorphism by compactness (and since $\mathbb R^{r_1}$ is a Hausdorff space).
The continuous function $f\circ\Theta^{-1}$ on the closed set $\Theta(X)$ extends
componentwise to a continuous $g$ on $\mathbb R^{r_1}$ by Tietze's extension theorem.
\end{proof}

\subsubsection{Approximation by Averaged Modal Features}\label{si:universality_features}
We use a small trick to extend the results to vector-valued inputs. Choose $M>0$ large enough so that the translated domains
$\Omega_c=\Omega+c (M,0,\ldots,0)$ have pairwise disjoint closures, and set
$\widetilde\Omega=\bigcup_{c=1}^{C_{\mathrm{in}}}\Omega_c$.
The map $z(y,c)=y+c(M,0,\ldots,0)$ identifies $X_{\mathrm{in}}$ with
$\overline{\widetilde\Omega}$. Define
\[
  \iota\colon L^1(\Omega;\mathbb R^{C_{\mathrm{in}}})
  \longrightarrow L^1(\widetilde\Omega),
  \qquad
  (\iota a)(z(y,c)):=a_c(y).
\]
This is a linear homeomorphism between the corresponding $L^1$ spaces and preserves
the chosen $L^\infty$ norm. Moreover,
\begin{equation}\label{eq:average_transfer}
 \frac1{|\widetilde\Omega|}\int_{\widetilde\Omega}R((\iota a)(z),z)\,\mathrm dz
   =\bigl\langle R(a(\cdot),z(\cdot))\bigr\rangle_{C_{\mathrm{in}}}.
\end{equation}
Thus Proposition~\ref{prop:ano_encoder} applies to vector-valued inputs by viewing
each component as a scalar function on its own copy of the spatial domain. The next lemma shows that the encoding by averages works also when using modes instead of an explicit coordinate input, given that the input basis separates points.

\begin{lemma}[Approximation by averaged modal features]\label{lem:modal_features}
Assume the input hypotheses of Theorem~\ref{thm:AMOR_universality}, and let
$\alpha\colon L^1(\Omega;\mathbb R^{C_{\mathrm{in}}})\to\mathbb R^J$ be continuous.
For every $\varepsilon_1>0$, there are finite integers $d_f,d_w$, coefficients
$\omega_i,\vartheta_i\in\mathbb R$, $\zeta_i\in\mathbb R^{k_{\mathrm{in}}}$,
$U\in\mathbb R^{d_w\times d_f}$, $\bar b\in\mathbb R^{d_w}$, a continuous map
$\mathcal G\colon\mathbb R^{d_w}\to\mathbb R^J$, and $w_{\max}>0$, such that for $i\in \{1,\dots,d_f\}$
\begin{equation}\label{eq:modal_features}
\begin{aligned}
 F_i(a)(y,c)&=\sigma\bigl(\omega_i a_c(y)+
                   \zeta_i\cdot\mathbf p^{\mathrm{in}}(y,c)+\vartheta_i\bigr),\\
 \widehat w(a)&=U\langle F(a)\rangle_{C_{\mathrm{in}}}+\bar b
                  \in[-w_{\max},w_{\max}]^{d_w},
\end{aligned}
\end{equation}
and
\begin{equation}\label{eq:via_step2}
 \sup_{a\in K}|\alpha(a)-\mathcal G(\widehat w(a))|_\infty\le\varepsilon_1.
\end{equation}
\end{lemma}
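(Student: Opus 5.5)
We first reduce to the scalar case via the embedding $\iota$ and \eqref{eq:average_transfer}. Then we apply Proposition~\ref{prop:ano_encoder}, rewrite the positional dependence of its local feature map through the mode values by Lemma~\ref{lem:positional}, and finally approximate the resulting continuous local feature map by a one-hidden-layer $\sigma$-network, absorbing the outer linear layer into $U,\bar b$.

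Set $\Omega'=\widetilde\Omega$ and $K'=\iota(K)$. The set $K$ is $L^2$-compact, hence $L^1$-compact because $\Omega$ is bounded, and $\iota$ is a linear homeomorphism, so $K'$ is compact in $L^1(\widetilde\Omega)$ with $\sup_{u\in K'}\|u\|_{L^\infty}\le M_\infty$. The functional $\alpha\circ\iota^{-1}$ is continuous. Proposition~\ref{prop:ano_encoder} with tolerance $\varepsilon_1/2$ gives continuous $R\colon\mathbb R\times\overline{\widetilde\Omega}\to\mathbb R^{d_w}$ and $\mathcal G$. By \eqref{eq:average_transfer} the averaged quantity equals $\langle R(a(\cdot),z(\cdot))\rangle_{C_{\mathrm{in}}}$.

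The map $\Theta(t,y,c)=(t,\mathbf p^{\mathrm{in}}(y,c))$ is continuous and injective on the compact space $[-M_\infty,M_\infty]\times X_{\mathrm{in}}$, because the input basis separates points, including across components. Lemma~\ref{lem:positional} therefore yields a continuous $g\colon\mathbb R^{1+k_{\mathrm{in}}}\to\mathbb R^{d_w}$ with
\[
R(t,z(y,c))=g\bigl(t,\mathbf p^{\mathrm{in}}(y,c)\bigr)
\]
there. The inputs only take values in $[-M_\infty,M_\infty]$ a.e., so this identity holds inside the average.

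Next we apply Leshno's density theorem to $g$ on the compact image $\Theta([-M_\infty,M_\infty]\times X_{\mathrm{in}})$. This gives $\omega_i,\zeta_i,\vartheta_i$, an outer matrix $U$ and a bias $\bar b$ such that
\[
\bigl|g(t,p)-U\sigma(\omega t+\zeta p+\vartheta)-\bar b\bigr|_\infty\le\eta
\]
uniformly on that image. Since $\bar b$ is constant, averaging commutes with the affine map, so $\widehat w(a)=U\langle F(a)\rangle+\bar b$ differs from the exact average $w(a)$ by at most $\eta$ in $|\cdot|_\infty$.

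\textbf{Main obstacle: the perturbation step.} The readout $\mathcal G$ is only continuous, not Lipschitz. To control it, note that the exact averages $w(a)$ lie in a bounded set, namely the convex hull of the compact range of $g$, contained in some cube $[-w_0,w_0]^{d_w}$. Set $w_{\max}=w_0+1$. On the compact cube $[-w_{\max},w_{\max}]^{d_w}$ the map $\mathcal G$ is uniformly continuous, so choosing $\eta\le1$ small enough guarantees $|\mathcal G(\widehat w)-\mathcal G(w)|_\infty\le\varepsilon_1/2$. This same choice also ensures $\widehat w(a)\in[-w_{\max},w_{\max}]^{d_w}$. Combining with the $\varepsilon_1/2$ bound from Proposition~\ref{prop:ano_encoder} gives \eqref{eq:via_step2}.

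The constant mode assumption is not needed here. It will be used later, when the network realizes $\mathbf p^{\mathrm{in}}$ from a constant channel.
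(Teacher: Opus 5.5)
Your proposal is correct and follows the paper's proof essentially step for step. Both transfer to $\widetilde\Omega$ via $\iota$, apply Proposition~\ref{prop:ano_encoder} with tolerance $\varepsilon_1/2$, rewrite $R$ through $(\tau,\mathbf p^{\mathrm{in}})$ by Lemma~\ref{lem:positional}, set $w_{\max}$ to one plus a bound on the range of the rewritten feature map (your convex-hull bound is the paper's $1+\max_{\mathcal C}|\widetilde R|_\infty$), and choose the shallow-network tolerance at most $1$ from uniform continuity of $\mathcal G$ on that cube.
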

\begin{proof}
The set $\iota(K)$ is $L^1$-compact and uniformly bounded in $L^\infty$.
Apply Proposition~\ref{prop:ano_encoder} on $\widetilde\Omega$ to
$\alpha\circ\iota^{-1}$ with tolerance $\varepsilon_1/2$. By
\eqref{eq:average_transfer}, the resulting maps satisfy
\begin{equation}\label{eq:via_ano}
 w(a)=\langle R(a(\cdot),z(\cdot))\rangle_{C_{\mathrm{in}}},
 \qquad
 |\alpha(a)-\mathcal G(w(a))|_\infty\le\varepsilon_1/2.
\end{equation}

Next, we replace positions with mode values. Apply
Lemma~\ref{lem:positional} on the compact space
$[-M_\infty,M_\infty]\times X_{\mathrm{in}}$ to the continuous injection
$(\tau,y,c)\mapsto(\tau,\mathbf p^{\mathrm{in}}(y,c))$.
It gives a continuous
$\widetilde R\colon\mathbb R^{1+k_{\mathrm{in}}}\to\mathbb R^{d_w}$ such that
$R(\tau,z(y,c))=\widetilde R(\tau,\mathbf p^{\mathrm{in}}(y,c))$
for $|\tau|\le M_\infty$.

To control the feature approximation error, set
$\mathcal C=[-M_\infty,M_\infty]\times\mathbf p^{\mathrm{in}}(X_{\mathrm{in}})$,
$w_{\max}=1+\max_{(\tau,\xi)\in\mathcal C}|\widetilde R(\tau,\xi)|_\infty$, and
$\mathcal W=[-w_{\max},w_{\max}]^{d_w}$.
The maximum is finite by continuity and compactness, and
$|w(a)|_\infty\le w_{\max}-1$ for every $a\in K$.
Uniform continuity of $\mathcal G$ on $\mathcal W$ gives $\delta_R\in(0,1]$ such that
\[
 w,w'\in\mathcal W,\quad |w-w'|_\infty\le\delta_R
 \quad\Longrightarrow\quad |\mathcal G(w)-\mathcal G(w')|_\infty\le\varepsilon_1/2.
\]
Finally, to approximate the continuous $\widetilde R$, we use the density theorem for shallow networks to obtain $U,\bar b,\omega_i,\zeta_i,\vartheta_i$ with
\[
 \sup_{(\tau,\xi)\in\mathcal C}
 \left|U\bigl(\sigma(\omega_i\tau+\zeta_i\cdot\xi+\vartheta_i)\bigr)_{i=1}^{d_f}
          +\bar b-\widetilde R(\tau,\xi)\right|_\infty\le\delta_R.
\]
Averaging this bound gives $|\widehat w(a)-w(a)|_\infty\le\delta_R$.
The definition of $w_{\max}$ and $\delta_R\le1$ ensure that both vectors lie in
$\mathcal W$. Combining the last two bounds with \eqref{eq:via_ano} proves
\eqref{eq:via_step2}.
\end{proof}

\subsubsection{Realization in AMOR}\label{si:universality_realization}
The preceding lemma is an approximation statement. The next three lemmas explain
how the declared layers implement it in AMOR. Index the constant mode first in each basis.
We take zero offsets and unit gates; every unspecified weight and bias is zero. 
Because $\sigma(0)=0$, a residual block leaves an inactive channel unchanged, while the transition sets it to zero. The following simple lemma gives two operations that can be implemented in one layer of AMOR. 

\begin{lemma}[Two operations supplied by the constant modes]\label{lem:constant_operations}
Consider a reduced-basis kernel with first encoding and decoding modes
$\mathbf1/\nu_e$ and $\mathbf1/\nu_d$, respectively, where
$\nu_e=\sqrt{C_e|\Omega|}$ and $\nu_d=\sqrt{C_d|\Omega|}$.
It can perform each of the following operations exactly, before activation.
\begin{enumerate}
 \item From a constant source channel $h_{\cdot,j_0}=s\mathbf1$, $s\ne0$,
 it can write $\sum_n q_n\varphi_n^{\mathrm{dec}}$ into a chosen target channel $i_0$.
 \item From function-valued source channels $f_j$, it can write the constant channels
 $\sum_j U_{r,j}\langle f_j\rangle_{C_e}$ for any matrix $U$.
\end{enumerate}
\end{lemma}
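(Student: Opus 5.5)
Both operations follow by choosing the coefficient tensor $T$ in \eqref{eq:AMOR_layer} with only a few nonzero entries and writing out encoder, tensor and decoder explicitly. The only facts used are orthonormality of the encoding basis and that its first mode is the normalized constant.

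\emph{Operation 1.} Encode the constant source channel $h_{\cdot,j_0}=s\mathbf1$. Since $\mathbf1=\nu_e\varphi^{\mathrm{enc}}_1$ and the encoding basis is orthonormal, we get
\[
\bigl((\bm\varphi_{\mathrm{enc}})^*h\bigr)_{m,j_0}=s\nu_e\,\delta_{m1}.
\]
So only the first coefficient of channel $j_0$ is nonzero. Set
\[
T_{n,1,i_0,j_0}=\frac{q_n}{s\nu_e}\quad\text{for all } n,
\]
and all other entries of $T$ zero. Then $Z'_{n,i_0}=q_n$ and $Z'_{n,i}=0$ for $i\ne i_0$. Decoding gives $\sum_n q_n\varphi^{\mathrm{dec}}_n$ in channel $i_0$ and zero in every other channel. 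Because the other source channels meet only zero entries of $T$, they do not contribute, and this step is exact. Here the encoding constant mode and the cross-mode (off-diagonal) entries $T_{n,1,\cdot,\cdot}$ supply arbitrary mode values.

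\emph{Operation 2.} For source channels $f_j$, the first encoded coefficient is
\[
\langle f_j,\mathbf1/\nu_e\rangle=\frac{C_e|\Omega|}{\nu_e}\,\langle f_j\rangle_{C_e}=\nu_e\,\langle f_j\rangle_{C_e}.
\]
Set
\[
T_{1,1,r,j}=\frac{\nu_d\,U_{r,j}}{\nu_e}\quad\text{for all } r,j,
\]
and all other entries zero. Then $Z'_{1,r}=\nu_d\sum_j U_{r,j}\langle f_j\rangle_{C_e}$ and $Z'_{n,r}=0$ for $n>1$. Decoding with $\varphi^{\mathrm{dec}}_1=\mathbf1/\nu_d$ yields the constant field $\sum_jU_{r,j}\langle f_j\rangle_{C_e}\mathbf1$ in channel $r$. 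The components of encoding and decoding may differ (transition layer), and this is already absorbed in $\nu_e$ and $\nu_d$.

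The only delicate point is bookkeeping. The averages $\langle\cdot\rangle_{C_e}$ sum over all components, and the factor $\sqrt{C|\Omega|}$ must be tracked correctly. One also has to check that the other channels, mode indices and components receive no stray contributions. Since every unspecified weight of $T$ is zero, each output entry is exactly one of the sums above. The statement concerns the pre-activation kernel output only, so $W$, $b$ and the activation play no role.
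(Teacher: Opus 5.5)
Your proposal is correct and matches the paper's proof: it uses the same tensor entries $T_{n,1,i_0,j_0}=q_n/(s\nu_e)$ and $T_{1,1,r,j}=(\nu_d/\nu_e)U_{r,j}$, with all other entries zero. The only difference is cosmetic. You invoke orthonormality to show that the coefficients of $s\mathbf1$ with $m\ne1$ vanish, whereas the paper notes this is unnecessary because every tensor entry with $m\ne1$ is zero anyway.
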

\begin{proof}
The encoding of $s\mathbf1$ has first coefficient $\langle s\mathbf1,\mathbf1/\nu_e\rangle=s\nu_e$.
Only this coefficient is used, so orthogonality of the remaining modes to the constant is not needed.
For the first operation, set
\[
 T_{n,1,i_0,j_0}=q_n/(s\nu_e).
\]
The decoded coefficients are then exactly $q_n$.
For the second operation, use
\[
 T_{1,1,r,j}=(\nu_d/\nu_e)U_{r,j}.
\]
Indeed, encoding $f_j$ against the constant gives
$\nu_e\langle f_j\rangle_{C_e}$, and decoding the first mode divides by $\nu_d$.
All other tensor entries are zero in either construction.
\end{proof}
Next, we show that the encoding by averaging modal features from Lemma \ref{lem:modal_features} can be implemented by AMOR.
\begin{lemma}[Realization of the input encoder]\label{lem:input_realization}
Fix $F,U,\bar b,w_{\max}$ as supplied by
Lemma~\ref{lem:modal_features}. Set
$\mathcal W=[-w_{\max},w_{\max}]^{d_w}$ and recall
\[
 \widehat w:K\to\mathcal W,
 \qquad
 \widehat w(a)
 =U\langle F(a)\rangle_{C_{\mathrm{in}}}+\bar b.
\]
For a sufficiently large channel width, the lifting, layer $0$, and transition
can produce the constant channels
\begin{equation}\label{eq:encoded_state}
\begin{aligned}
 h^{(2)}_{\cdot,r}&=v_r(a)\mathbf1\quad(1\le r\le d_w),\\
 h^{(2)}_{\cdot,d_w+1}&=s_1\mathbf1,
\end{aligned}
\end{equation}
with all other channels zero, where $s_1\ne0$ and
$v(a)=\Upsilon(\widehat w(a))$. Here $\Upsilon$ is a homeomorphism from
$\mathcal W$ onto a compact set $\mathcal W_\sigma$.
\end{lemma}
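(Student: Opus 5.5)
The plan is to realize each ingredient of $\widehat w$ in turn, one architectural stage at a time. The lifting produces the raw material, layer $0$ produces the modal features $F_i(a)$, and the transition averages them. The activation in the transition is then absorbed into the homeomorphism $\Upsilon$. All gates are $1$, offsets are zero, and unspecified weights are zero, as fixed above. The width $d_v$ is taken at least $d_f+2$ for the hidden stages and at least $d_w+1$ for the output of the transition.

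\emph{Lifting.} Choose $w^{\mathcal R}$ and $b^{\mathcal R}$ so that channel $1$ carries $a_c(x)$ (weight $1$, bias $0$) and channel $2$ carries the constant $\mathbf1$ (weight $0$, bias $1$). All other channels, in particular channels $3,\ldots,d_f+2$, are zero.

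\emph{Layer $0$.} This is a residual block with the input basis on both sides. For each $i\le d_f$, use operation~1 of Lemma~\ref{lem:constant_operations} with source the constant channel $2$ ($s=1$), writing $\sum_m\zeta_{i,m}\varphi^{\mathrm{in}}_m=\zeta_i\cdot\mathbf p^{\mathrm{in}}$ into target channel $i+2$. The componentwise shared pointwise map $\widetilde W^{(0)}$ adds $\omega_i$ times channel $1$ to channel $i+2$, and the bias $\widetilde b^{(0)}_{i+2}=\vartheta_i$. The pre-activation in channels $1$ and $2$ is zero, so by $\sigma(0)=0$ the residual leaves them unchanged. Channel $i+2$, which was zero before, becomes exactly $F_i(a)$ from \eqref{eq:modal_features}.

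\emph{Transition.} Here the residual is absent. Since $\sigma$ is strictly monotone on the open interval $\mathcal I$, pick a compact subinterval $[t_-,t_+]\subset\mathcal I$ and the affine map $\lambda t+\mu$ with $\lambda>0$ taking $[-w_{\max},w_{\max}]$ onto it. Apply operation~2 of Lemma~\ref{lem:constant_operations} with the matrix $\lambda U$ acting on the source channels $3,\ldots,d_f+2$, writing into targets $r=1,\ldots,d_w$. The decoder's first mode $\mathbf1/\nu_d$ is constant in every output component, so this writes the constant $\lambda\bigl(U\langle F(a)\rangle_{C_{\mathrm{in}}}\bigr)_r$ in all $C_{\mathrm{out}}$ components. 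Add the bias $b^{(1)}_{c,r}=\lambda\bar b_r+\mu$, chosen equal across components. The pre-activation is then $\lambda\widehat w_r(a)+\mu\in[t_-,t_+]$, because $\widehat w(a)\in\mathcal W$ by Lemma~\ref{lem:modal_features}.

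Define $\Upsilon(w)=\bigl(\sigma(\lambda w_r+\mu)\bigr)_{r=1}^{d_w}$. On each coordinate this is continuous and strictly monotone on a compact interval, hence a homeomorphism onto its compact image. So $\Upsilon$ is a homeomorphism of $\mathcal W$ onto the product set $\mathcal W_\sigma$, and $h^{(2)}_{\cdot,r}=v_r(a)\mathbf1$ with $v=\Upsilon\circ\widehat w$.

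For channel $d_w+1$, use the unrestricted bias alone, with value $t_0\in\mathcal I$. Strict monotonicity allows at most one zero of $\sigma$ on $\mathcal I$, so $t_0$ can be chosen with $s_1=\sigma(t_0)\ne0$. Every other transition channel has zero pre-activation and hence output $\sigma(0)=0$. This also removes channels $1$ and $2$ and the features, since the transition carries no residual.

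The step needing the most care is the transition. Two points must hold there. First, the single activation must leave an invertible image of $\widehat w$, which is why all pre-activations are confined to one compact subinterval of $\mathcal I$ where $\sigma$ is injective. Second, the resulting channels must be genuinely constant across the $C_{\mathrm{out}}$ components. This uses that the decoded first mode is the normalized constant in every component, and that the transition bias is allowed to be component-independent.
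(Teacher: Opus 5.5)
Your proposal is correct and follows the paper's construction essentially step for step. The lifting to $(a,\mathbf 1)$ is the same, as is writing $\zeta_i\cdot\mathbf p^{\mathrm{in}}$ from the constant channel via operation~1 in layer~$0$, and the transition averaging via operation~2 into a compact subinterval of $\mathcal I$, with your $\Upsilon(w)=(\sigma(\lambda w_r+\mu))_r$ matching the paper's $(\sigma(t_0+tw_r))_r$. The remaining differences are cosmetic: you parametrize the affine map by its target interval, and you pick $s_1$ using the fact that $\sigma$ has at most one zero on $\mathcal I$ rather than merely that $\sigma$ is nonconstant.
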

\begin{proof}
\emph{Lifting.} Choose the first two channels as $a$ and $\mathbf1$ and all others
as zero; this is allowed by \eqref{eq:lifting}.

\emph{Input feature layer.} For each $i$, use the first operation in
Lemma~\ref{lem:constant_operations} to write
$\zeta_i\cdot\mathbf p^{\mathrm{in}}$ from the constant channel into channel $2+i$.
Choose the pointwise weight on the input channel to be $\omega_i$ and the bias
to be $\vartheta_i$. The residual in that target channel is zero, so the resulting
channel is exactly $F_i(a)$. The first two channels stay unchanged.

\emph{Transition.} Let $\mathcal I$ be a nonempty open interval of strict monotonicity supplied by \eqref{eq:sigma_assump}. Fix $t_0\in\mathcal I$ and choose $t>0$ such that
$\mathcal I_t=[t_0-tw_{\max},t_0+tw_{\max}]\subset\mathcal I$.
Set $W^{(1)}=0$. The second operation in Lemma~\ref{lem:constant_operations},
with matrix $tU$, writes $tU\langle F(a)\rangle$ into channels $1,\ldots,d_w$.
Add bias $t_0+t\bar b_r$ to channel $r$. After activation this gives
\[
 v_r(a)=\sigma(t_0+t\widehat w_r(a)).
\]
Use a bias $\vartheta_0$ in channel $d_w+1$ such that
$s_1=\sigma(\vartheta_0)\ne0$, which exists because $\sigma$ is not constant.
The transition has no residual connection, so all unused channels vanish.

Finally, define $\Upsilon(w)=(\sigma(t_0+tw_r))_{r=1}^{d_w}$.
Strict monotonicity makes $\Upsilon\colon\mathcal W\to\mathcal W_\sigma=\sigma(\mathcal I_t)^{d_w}$ a
homeomorphism, with continuous componentwise inverse
\[
 \Upsilon^{-1}(v)=\frac{(\sigma|_{\mathcal I_t})^{-1}(v)-t_0\mathbf1}{t}.
\]
Only local invertibility of the activation is used. This construction requires
$d_v\ge\max(d_f+2,d_w+1)$; additional zero channels can be reserved for decoding.
\end{proof}

\begin{lemma}[Realization of a continuous output decoder]\label{lem:output_realization}
Assume that the output basis satisfies the hypotheses of
Theorem~\ref{thm:AMOR_universality}. Let $\mathcal W_\sigma\subset\mathbb R^{d_w}$ be compact,
$\widetilde{\mathcal G}\colon\mathcal W_\sigma\to\mathbb R^J$ continuous, and
$\eta_1,\ldots,\eta_J\in C(X_{\mathrm{out}})$.
Suppose $h^{(2)}(v)$ consists of the constant channels $v_1,\ldots,v_{d_w}$,
a nonzero constant channel $s_1$, and otherwise zeros.
For every $\varepsilon_2>0$, one residual output block and a projection
$\mathcal Q$ of the form \eqref{eq:lifting} can be chosen so that
\begin{equation}\label{eq:via_step3}
 \sup_{v\in\mathcal W_\sigma}\left\|\mathcal Q(h^{(3)}(v))-
             \sum_{j=1}^J\widetilde{\mathcal G}_j(v)\eta_j\right\|_{\mathcal V}\le\varepsilon_2.
\end{equation}
It suffices to take $d_v\ge d_w+1+k_{\mathrm{out}}$, with a suitable finite $d_q$.
\end{lemma}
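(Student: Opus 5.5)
The plan is to reduce the statement to one application of Lemma~\ref{lem:positional} followed by one application of the shallow-network density theorem. The residual output block only has to turn the constant channel $s_1\mathbf1$ into channels carrying an injective, continuous transform of the output mode values $\mathbf p^{\mathrm{out}}(x,c)$. It must do this while leaving the channels $v_1,\ldots,v_{d_w}$ and $s_1$ untouched. The projection $\mathcal Q$ then acts as a pointwise shallow network that approximates the continuous target function $(v,x,c)\mapsto\sum_j\widetilde{\mathcal G}_j(v)\eta_{j,c}(x)$.

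\emph{Step 1 (residual block).} Reserve channels $d_w+1+n$, $1\le n\le k_{\mathrm{out}}$, which are zero in $h^{(2)}$. Use unit gate and zero pointwise weight $\widetilde W^{(2)}$. Apply the first operation of Lemma~\ref{lem:constant_operations}, from source channel $d_w+1$ (the constant $s_1\mathbf1$, now with $C_{\mathrm{out}}$ components), with coefficients $q=t\,e_n$. This writes $t\,\varphi^{\mathrm{out}}_n$ into channel $d_w+1+n$. Add the shared bias $t_0$ there, with $t_0\in\mathcal I$ as in Lemma~\ref{lem:input_realization}. The modes are continuous on the compact set $X_{\mathrm{out}}$, hence bounded by some $P$. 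Choose $t>0$ so that $[t_0-tP,t_0+tP]\subset\mathcal I$. After activation and the residual addition, channel $d_w+1+n$ equals $\sigma(t_0+t\varphi^{\mathrm{out}}_{n,c}(x))$. All other channels receive update $\sigma(0)=0$ and stay unchanged. Thus, at each point,
\[
 h^{(3)}_{c,\cdot}(x)=\bigl(v,\ s_1,\ \Upsilon_{\mathrm{out}}(\mathbf p^{\mathrm{out}}(x,c)),\ 0\bigr),
\]
where $\Upsilon_{\mathrm{out}}$ acts entrywise and is injective by strict monotonicity. This is where $d_v\ge d_w+1+k_{\mathrm{out}}$ is needed.

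\emph{Step 2 (positional lemma).} Consider the map $\Theta(v,x,c)=(v,s_1,\Upsilon_{\mathrm{out}}(\mathbf p^{\mathrm{out}}(x,c)))$ on the compact metric space $\mathcal W_\sigma\times X_{\mathrm{out}}$. It is continuous, and it is injective because the output basis separates points, including across components. The target $f(v,x,c)=\sum_j\widetilde{\mathcal G}_j(v)\eta_{j,c}(x)$ is continuous there. Lemma~\ref{lem:positional} yields a continuous $g$ on $\mathbb R^{d_w+1+k_{\mathrm{out}}}$ with $f=g\circ\Theta$.

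\emph{Step 3 (projection).} On the compact set $\Theta(\mathcal W_\sigma\times X_{\mathrm{out}})$, apply the density theorem for shallow networks with the continuous nonpolynomial $\sigma$~\cite{leshno1993multilayer}. This gives $W_1^{\mathcal Q},b_1^{\mathcal Q},W_2^{\mathcal Q},b_2^{\mathcal Q}$, with finite $d_q$, whose uniform error is at most $\varepsilon_2/\sqrt{C_{\mathrm{out}}|\Omega|}$. Zero columns of $W_1^{\mathcal Q}$ handle the unused channels. Since $\mathcal Q$ is applied identically in every component, the component dependence must come entirely through $\mathbf p^{\mathrm{out}}(x,c)$; this is exactly why separation across components is required. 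Integrating the squared pointwise error over $X_{\mathrm{out}}$ gives \eqref{eq:via_step3}. If equal widths are desired, pad $d_v$ or $d_q$ with zero channels.

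The main obstacle is Step 1. The block applies the activation after writing the modes, so the mode values cannot be passed through linearly. Keeping the pre-activations inside the strict-monotonicity interval restores injectivity. One must also check that the residual structure leaves $v$ and $s_1$ exactly intact, which follows from $\sigma(0)=0$ and zero updates in those channels.
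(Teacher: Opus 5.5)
Your proposal is correct and follows the paper's proof essentially step for step. The residual block writes $t\varphi_n^{\mathrm{out}}$ from the constant channel, with bias $t_0$ keeping pre-activations inside the monotonicity interval. The projection then comes from Lemma~\ref{lem:positional} plus shallow-network density, with pointwise tolerance $\varepsilon_2/\sqrt{C_{\mathrm{out}}|\Omega|}$. The only cosmetic difference is that you apply Lemma~\ref{lem:positional} once on $\mathcal W_\sigma\times X_{\mathrm{out}}$. The paper instead applies it on $X_{\mathrm{out}}$ to get $\eta_j=\chi_j\circ\Lambda$ and then forms $f_{\mathcal Q}(v,\xi)=\sum_j\widetilde{\mathcal G}_j(v)\chi_j(\xi)$ explicitly.
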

\begin{proof}
\emph{Output positional features.} Let
$\bar\varphi_{\mathrm{out}}=\max_{n,x,c}|\varphi_{n,c}^{\mathrm{out}}(x)|$.
Choose $t_0\in\mathcal I$ and $t'>0$ such that
$[t_0-t'\bar\varphi_{\mathrm{out}},t_0+t'\bar\varphi_{\mathrm{out}}]\subset\mathcal I$, and define
\[
 \Lambda(x,c)=\bigl(\sigma(t_0+t'\varphi_{n,c}^{\mathrm{out}}(x))\bigr)_{n=1}^{k_{\mathrm{out}}}.
\]
This map is continuous and injective because $\mathbf p^{\mathrm{out}}$ is
injective and $\sigma$ is strictly monotone on the chosen interval.
For $i_n=d_w+1+n$, use source channel $d_w+1$, carrying $s_1\mathbf1$, and the first
operation in Lemma~\ref{lem:constant_operations} to write
$t'\varphi_n^{\mathrm{out}}$ into channel $i_n$. Add bias $t_0$.
The target channel has zero residual, so after activation
$h^{(3)}_{c,i_n}(x)=\Lambda_n(x,c)$. All other updates are zero; hence the residual
preserves the channels carrying $v$ and $s_1$.

\emph{Shallow projection.} Lemma~\ref{lem:positional} gives continuous functions
$\chi_j$ with $\eta_j=\chi_j\circ\Lambda$.
The finite-dimensional function
\[
 f_{\mathcal Q}(v,\xi)=\sum_{j=1}^J\widetilde{\mathcal G}_j(v)\chi_j(\xi)
\]
is continuous on the compact set $\mathcal W_\sigma\times\Lambda(X_{\mathrm{out}})$.
Shallow-network density therefore supplies the parameters of $\mathcal Q$ in
\eqref{eq:lifting}, with zero input weights for all unused channels, such that
\[
 \left|\mathcal Q(h^{(3)}(v))(x)_c-f_{\mathcal Q}(v,\Lambda(x,c))\right|
   \le\frac{\varepsilon_2}{\sqrt{C_{\mathrm{out}}|\Omega|}}.
\]
Since $f_{\mathcal Q}(v,\Lambda(x,c))=\sum_j\widetilde{\mathcal G}_j(v)\eta_j(x,c)$, integration over
$X_{\mathrm{out}}$ proves \eqref{eq:via_step3}.
\end{proof}

\subsubsection{Proof of the Main Theorem}\label{si:universality_proof}
\begin{proof}[Proof of Theorem~\ref{thm:AMOR_universality}]
Fix $\varepsilon>0$ and let $\varepsilon_0=\varepsilon/2$.
Proposition~\ref{prop:reduction}, with $\delta=\varepsilon_0/3$, gives continuous
output functions $\eta_j$ and input functionals $\alpha_j$ satisfying
\eqref{eq:via_step1}. Define
\[
 \bar\eta=1+\sum_{j=1}^J\|\eta_j\|_{\mathcal V},
 \qquad \varepsilon_1=\frac{\varepsilon_0}{3\bar\eta}.
\]

Apply Lemma~\ref{lem:modal_features} to $\alpha=(\alpha_1,\ldots,\alpha_J)$ with
tolerance $\varepsilon_1$. Lemma~\ref{lem:input_realization} realizes its averaged
features as $v(a)=\Upsilon(\widehat w(a))$ in the transition output.
The map $\widetilde{\mathcal G}=\mathcal G\circ\Upsilon^{-1}$ is continuous on $\mathcal W_\sigma$, and
$\widetilde{\mathcal G}(v(a))=\mathcal G(\widehat w(a))$.
Apply Lemma~\ref{lem:output_realization} to this $\widetilde{\mathcal G}$ and the functions $\eta_j$,
with $\varepsilon_2=\varepsilon_0/3$.

The resulting network satisfies, uniformly for $a\in K$,
\[
\begin{aligned}
 \|\Psi^\dagger(a)-\Psi_\theta(a)\|_{\mathcal V}
 &\le\frac{\varepsilon_0}{3}
    +\sum_{j=1}^J|\alpha_j(a)-\mathcal G_j(\widehat w(a))|\,\|\eta_j\|_{\mathcal V}
    +\frac{\varepsilon_0}{3}\\
 &\le\frac{\varepsilon_0}{3}
    +\varepsilon_1\sum_{j=1}^J\|\eta_j\|_{\mathcal V}
    +\frac{\varepsilon_0}{3}
 \le\varepsilon_0<\varepsilon.
\end{aligned}
\]
The construction uses three hidden operator layers, indexed $0,1,2$, so $L=2$. 
For $L>2$, append $L-2$ residual output blocks with zero parameters; they act as the identity because $\sigma(0)=0$.
One may take $d_v=\max(d_f+2,\,d_w+1+k_{\mathrm{out}})$ and $d_q$
from Lemma~\ref{lem:output_realization}.
If the widths are tied, replace both by
$\max(d_f+2,\,d_w+1+k_{\mathrm{out}},\,d_q)$ and pad unused channels
and projection units with zeros. 
Continuity of $\Psi_\theta\colon\mathcal U\to\mathcal V$ follows from \eqref{eq:sigma_assump} and the boundedness of the linear maps.
\end{proof}

\begin{remark}[Factorized AMOR is also universal]\label{rem:factorized_universality}
The construction uses only encoding mode $m=1$.
For each output channel $i$, the matrix $(T^{(\ell)}_{n,1,i,j})_{n,j}$ has rank
at most one: the positional-feature operations use one source channel, and the
averaging operation uses one decoding mode. Hence these tensors admit the
factorization in Remark~\ref{rem:factorized}. The theorem therefore also holds for the factorized architecture.
\end{remark}

\begin{remark}[Reference offsets and gates]\label{rem:offsets_univ}
Theorem~\ref{thm:AMOR_universality} holds for the affine path of Remark~\ref{rem:offsets}. Let $L_\sigma$ be the Lipschitz constant of $\sigma$.
(i) \emph{Encoder offsets.} The construction uses only the first encoding mode, on which $h-\mu_{\mathrm{enc}}$ has coefficient $\nu_e(\langle h\rangle-\langle\mu_{\mathrm{enc}}\rangle)$. Operation~1 of Lemma~\ref{lem:constant_operations} therefore holds with $s$ replaced by $s-\langle\mu_{\mathrm{enc}}\rangle$, which is nonzero for a suitable lifting bias and $\vartheta_0$ because $\sigma$ is nonconstant; operation~2 acquires a constant that the bias absorbs. With $\mu_{\mathrm{dec}}=0$, the network $\Psi^0_\theta$ thus realizes the construction exactly.
(ii) \emph{Decoder offsets.} Set all gates to $g\in(0,1]$ and divide $W^{(\ell)},T^{(\ell)},b^{(\ell)}$ by $g$. The pre-activations then equal those of $\Psi^0_\theta$ plus $g\mu_{\mathrm{dec}}$ in every channel. In these rescaled parameters, each block is Lipschitz on its $L^2$ state space with a constant independent of $g$ and differs from its counterpart in $\Psi^0_\theta$ by at most $L_\sigma g\sqrt{d_v}\|\mu_{\mathrm{dec}}\|$. Hence $\sup_{a\in K}\|\Psi_\theta(a)-\Psi^0_\theta(a)\|_{\mathcal V}\le Cg$ with $C$ independent of $g$, and small $g$ preserves the bound of Theorem~\ref{thm:AMOR_universality}.
\end{remark}

\begin{remark}[Where the hypotheses enter]
Compactness of $K$ and continuity of $\Psi^\dagger$ give the finite-rank
reduction. The uniform $L^\infty$ bound permits the averaging encoder
and supplies a compact interval of input values.
Continuous separating modes replace spatial coordinates and component
labels; the constant modes implement averaging and mode generation.
Nonpolynomiality gives shallow-network density. Local strict
monotonicity preserves information across the transition activation
and the output positional map. The condition $\sigma(0)=0$ preserves
inactive residual channels. Global Lipschitz continuity ensures that
the full network is continuous between the stated $L^2$ spaces.
\end{remark}

\subsubsection{Scope and Basis Choice}\label{si:universality_scope}
The theorem keeps both bases fixed and lets $d_v$ grow.
Under its assumptions, fixed bases allow arbitrarily accurate
approximation, but the result gives no bound on the required width,
conditioning, or training error. The proof uses constant modes for
averaging and broadcasting, and mode values as positional features.
It therefore does not quantify how basis size or quality affects
approximation efficiency.

For scalar fields, any span containing $1,x_1,\ldots,x_d$ separates
points; for $C$ components, the span of
$e_c,x_1e_c,\ldots,x_de_c$, $c=1,\ldots,C$, does the same.
Here $e_c$ is the $c$th standard unit vector of $\mathbb R^C$.
These spans can be orthonormalized with the normalized global constant
first. Point separation with a constant mode requires $k\ge d+1$
by invariance of domain~\cite[Thm.~2B.3]{hatcher2002algebraic},
but this dimension bound alone is not sufficient.

POD, DIS, and learned bases adapt to the data distribution and,
depending on their construction, to the target operator.
This adaptation is intentional: distinctions that are irrelevant
for the task may be suppressed. For example, on a reflection-symmetric domain, even output modes
identify reflected positions, which is compatible with targets
whose outputs are even functions.
More generally, adapting to the data can reflect useful invariances
(ignoring irrelevant changes) or support equivariances (transforming
outputs consistently with transformed inputs).
Learning a basis alone does not guarantee these properties:
the modal weights and other layers must also respect the symmetry.

Such bases need not satisfy the theorem's constant-mode and separation
assumptions. Failure of these sufficient conditions does not rule out
accurate approximation of a particular target, but requires a separate
analysis. Distinct mode vectors on a sampled mesh establish separation
only on that mesh.

Purely periodic modes identify opposite faces of a closed period cell,
although they may separate points on the corresponding torus.
Bases whose every mode vanishes on the boundary lack the constant mode.
These cases require a separate argument; failure of a hypothesis here
is not an $L^2$ impossibility result.

For input laws with unbounded support, extending the result to
approximation in expectation requires suitable regularity and
integrability assumptions and a cut-off argument;
see \cite[Rem.~1]{lanthaler2025nonlocality}.

\subsubsection{Relation to Other Constructions}\label{si:universality_context}
The averaging result of \cite{lanthaler2025nonlocality} uses local maps
that can access position. Its application to eigenbasis operators in
\cite[Sec.~3.5]{lanthaler2025nonlocality}, including the NORM setting
\cite{chen2024norm}, assumes that averaging can be represented and that
the lifting and projection provide the required positional information.
AMOR's componentwise lifting and projection receive no coordinates.
The additional construction in Lemmas~\ref{lem:input_realization}
and~\ref{lem:output_realization} generates that information internally.
The separating-mode and constant-mode hypotheses are sufficient for
this construction; no necessity claim is made for all architectures.

\textbf{Fourier multipliers and positional information.}
On a torus, a standard FNO with frequency-diagonal multipliers,
spatially shared pointwise maps, and no coordinates, spatially varying
biases, or other positional features is translation-equivariant
\cite[Sec.~2.4]{lanthaler2025nonlocality}.
In particular, it maps a constant input to a constant output.
Although suitable Fourier modes separate points on the torus,
frequency-diagonal multipliers cannot turn a constant channel into
nonconstant positional features. AMOR permits off-diagonal couplings
from the constant encoding mode to the other decoding modes.
This distinction concerns the specified architecture, not the Fourier
basis itself.

\textbf{Fixed linear encoders and decoders.}
In the universal construction above, component interactions are formed
through the joint vector of averaged features. The full architecture
also permits component mixing in $W^{(1)}$.
For comparison, a construction of the form
\[
 a\longmapsto\bm\varphi^{\mathrm{out}}
 F\bigl((\bm\varphi^{\mathrm{in}})^*a\bigr)
\]
has a fixed linear input encoder and a fixed linear output decoder.
Its output lies in the output reduced space, and inputs with equal
encoded coefficients are indistinguishable.
These are restrictions of this explicit form.
They should be distinguished from truncation steps used within
approximation proofs
\cite{bhattacharya2021model,kovachki2021universal,cheng2026podno};
such proof steps do not by themselves imply that every network in
the corresponding architecture has a fixed linear information
bottleneck or output range.
AMOR applies nonlinear pointwise processing before averaging and
nonlinear reconstruction after decoding.
Removing all pointwise and residual routes around the initial linear
encoding creates an input-information bottleneck, but nonlinear
reconstruction still need not lie in the output basis span.

\section{Computational Details}\label{si:computational_details}

\subsection{Dataset generation details}\label{si:data}

\subsubsection{Nonlinear diffusion.} Discretized by a cell-centred finite-volume scheme on a uniform $100\times100$ grid (FiPy) and solved with the default sparse direct backend. Three versions use different Gaussian-process variance for the input fields, giving different input smoothness levels. Additional resolutions $50$--$200$ are generated for the mesh-generalization tests, and a further $10{,}000$-sample dataset is generated for memory-efficient-training experiments.

\subsubsection{Helmholtz.} The coupled real--imaginary state with perfectly matched layer (PML) absorption is discretized with continuous piecewise-linear (P1) Lagrange elements on a triangulated $100\times100$ rectangle covering the PML-extended domain (FEniCS/hippylib) and solved with a PETSc sparse direct (LU) backend. The fixed point source and the three $(\sigma,\rho)$ prior variants are otherwise identical across datasets.

\subsubsection{Kolmogorov flow.} All three datasets are supplied by~\cite{li2022mno,li2022mno_zenodo}.
All models predict a one-step Euler
residual, $\hat u(t+\Delta t)=u(t)+\Psi_\theta\bigl(u(t)\bigr)$, trained against
$u(t+\Delta t)$. 
The output basis is fit to
increments $\Delta u=u(t+\Delta t)-u(t)$; the input basis remains snapshot-based.

\subsubsection{Geo-FNO datasets: Elasticity and Airfoil}\label{si:geofno}
We consider the elasticity and airfoil datasets introduced in Geo-FNO~\cite{li2023fourier}, using the published training/test splits. The inputs are the nodal coordinates, and the targets are the stress field on the 972-node elasticity mesh and the Mach number on the $221\times51$ airfoil C-mesh. Coordinates are standardized per node and channel using training-split statistics, with standard deviations floored at $1\%$ of the corresponding channel's median nodal standard deviation. Outputs are normalized channelwise. All normalization statistics and mass-weighted PCA bases are computed from the training split only. The PCA inner product uses lumped nodal masses, obtained from dual-cell areas for the airfoil mesh and Delaunay-triangle areas for the elasticity point cloud.

\subsubsection{Linear elastostatics (3D).} Discretized with trilinear hexahedral ($Q_1$) elements on a uniform $64^3$ nodal grid ($63^3$ cells, $3\cdot64^3=786{,}432$ displacement DOFs) and solved with PETSc conjugate gradients preconditioned by algebraic multigrid. The modulus is a clipped log-normal field, $E=\mathrm{clip}(E_0\exp(\sigma g),0.2,5)$ with $E_0=1$, $\sigma=0.5$, and $g$ an eight-term truncated KL expansion of a squared-exponential field with correlation length $0.35$; Poisson ratio $\nu=0.3$ is fixed. The cube is clamped at $z=0$, loaded by a uniform traction $t_0=1$ at $z=1$, and traction-free laterally; boundary conditions are identical across samples.

\subsubsection{FlowBench LDC.} 1000 steady 3D incompressible Navier--Stokes lid-driven-cavity simulations on a $128^3$ grid with varying internal geometry (ellipsoids, toroids, boxes, cylinders), each a fully resolved DNS from the FlowBench simulator~\cite{tali2024flowBench}.

\subsubsection{Aircraft rib.} Tetrahedral mesh: $n_{\mathrm{tet}}=229{,}884$ cells, $135{,}040$ exterior triangular facets, $3n_{\mathrm{node}}=217{,}845$ displacement DOFs. The Lam\'e parameters derive from a log-normal Young's modulus field (nodal mean $E\approx7.6$--$9.0\times10^4$, correlation length $20$--$180$) and Poisson ratio $\nu$ ($0.3$ fixed for the cantilever tip variant; $\nu\sim\mathcal U[0.25,0.35]$ for the distributed aero and multipatch bracket variants). A fixed Dirichlet clamp $\mathbf u=\mathbf 0$ is imposed on a marked facet set, motivated by cantilever test configurations for aircraft ribs and spars~\cite{Petrasinovic2017,IJRSIWingSpar,Esposito2023}; a Neumann traction $\mathbf t=\hat{\mathbf d}(F/A_\Gamma)$ is applied on the loaded facets $\Gamma$, with the remainder traction-free. Loaded forces are clipped to $F\in[5,3000]$, spanning lightly loaded to concentrated fitting loads motivated by experimental ranges~\cite{Petrasinovic2017}. The multipatch-bracket patches are generated by nearest-centroid region growth to a target area fraction $\in[0.005,0.08]$, each with an independently sampled outward-oriented loading direction, with multiple simultaneous patches emulating several discrete attachment loads by linear superposition~\cite{IJERTBracket}. The forward problem is discretized with continuous piecewise-linear (P1) vector Lagrange elements in FEniCS/dolfin and solved with a PETSc sparse direct (LU) backend, with a relative-residual diagnostic recorded per solve. Each sample stores the physics-native facet traction field \texttt{traction\_facet}, a deterministic area-weighted boundary-nodal load field \texttt{load\_field\_boundary} ($\mathbf t_i=\sum_f A_f\mathbf t_f/\sum_f A_f$ over incident facets), the displacement field and its magnitude, and the modulus field; for the operator-learning models the inputs are the boundary load field together with $E(\mathbf x)$ and the target is $\mathbf u$.

\subsection{Architecture and Hyperparameters}\label{si:params}
AMOR and FNO share the number of nonlocal layers and the channel width $d_v$; in AMOR, $d_v$ counts channels per physical component, so the hidden state has $C\,d_v$ channels for $C$ components. 
Their nonlocal operators differ: FNO learns a $d_v\times d_v$ matrix for each retained Fourier mode, whereas AMOR learns a coefficient tensor that couples all pairs of retained modes, either dense or factorized (Remark~\ref{rem:factorized}). Parameter counts therefore differ and are listed in Table~\ref{tab:model_configs}.

\par \textbf{FNO.} Official implementation from the neuraloperator repository~\cite{kossaifi2025librarylearningneuraloperators}.

\par \textbf{PCANet.} Bhattacharya et al.~\cite{bhattacharya2021model} single-layer architecture, with no nonlinearity in physical space between projection and reconstruction; given the same $k_{\mathrm{in}}/k_{\mathrm{out}}$ and basis as AMOR wherever both are compared.

\par \textbf{AMOR.} Factorized version for all experiments where $k_{\mathrm{enc}}k_{\mathrm{dec}}>10^4$ (all except Aircraft Rib and Elastostatics), dropout $0$, batch normalization disabled, coordinate features disabled, constant mode disabled, reference offsets: training-split means, enabled in all experiments. 
Mode count selection is fixed at $144/144$ (matched to FNO) on the three 2D benchmarks; $99.9\%$ explained-variance truncation on Elastostatics 3D ($38/15$) and the Geo-FNO benchmarks.

On the three 2D benchmarks we fix $k_{\mathrm{in}}=k_{\mathrm{out}}=144$ to match FNO's nominal $12\times12$ truncation.
On Elastostatics 3D and the Geo-FNO problems we truncate PCA to $99.9\%$ explained variance on the training split. For training AMOR on the Geo-FNO datasets, we use a batch size of $20$ and the relative $L^2$ loss, following the Geo-FNO setup~\cite{li2023fourier}. Models are trained for 500 epochs without early stopping, and the final-epoch model is evaluated.
PCA on the FlowBench inputs is near-degenerate (a few leading modes already saturate practical variance cutoffs without resolving the flow; Sec.~\ref{subsec:3d}), so AMOR uses a Laplacian eigenbasis on the $128^3$ grid.
We select $(k_{\mathrm{in}},k_{\mathrm{out}})$ by a short capacity sweep over fixed mode pairs (with $k_{\mathrm{in}}$ clamped by the attainable input rank, $\approx 360$), scoring candidates by validation relative $L^1$; the reported configuration is $k_{\mathrm{in}}/k_{\mathrm{out}}=360/100$.
For all rib scenarios we select $(k_{\mathrm{in}},k_{\mathrm{out}})$, width, and depth by validation relative $L^1$ with early stopping; bases are fit on the training split only.
Scout budgets differ by scenario rather than by a common full grid.
The tip scout draws a random subset of $\{\text{width}\!\in\!\{16,24,32\}\}\times\{\text{layers}\!\in\!\{3,4,5\}\}\times \{(k_{\mathrm{in}},k_{\mathrm{out}})\in\{(15,2),(28,3),(40,5),(100,30),(200,50)\}\}$ and keeps the validation best ($100/30$, width~$16$, $5$ layers), later retrained on the full tip set.
For distributed aero, multipatch bracket, and mixed, candidates are a shorter ladder of ranks near PCA explained-variance cutoffs (plus nearby width/depth); we retain the validation-best settings in Table~\ref{tab:model_configs} ($15/2$, $30/5$, and $15/2$).
All reduced bases are fit on the training split only.

\begin{table}[htbp]
\ifarxiv\else
\begingroup
\linespread{1.1}\selectfont
\fi
\centering
\small
\caption{Architecture hyperparameters used in the results for FNO, AMOR and PCANet.
For FNO, modes are retained Fourier modes per axis; for AMOR and PCANet they are the number of reduced-basis coefficients $(k_{\mathrm{in}}/k_{\mathrm{out}})$.
For PCANet, width and layers denote the MLP hidden dimension and depth.
Unless noted, training uses AdamW (learning rate $2\times10^{-3}$, weight decay $10^{-4}$), cosine annealing, grad-clip~$3$, relative $L^1$ loss, and early stopping.
Batch sizes are chosen based on one-sample memory and training hardware, resulting in 32 for nonlinear diffusion and Kolmogorov, 8 for Helmholtz, 1 for 3D elastostatics and FlowBench, 64 for the cantilever-tip aircraft rib version, 8 for the other aircraft rib versions, and 20 for the Geo-FNO datasets.}
\label{tab:model_configs}
\begin{tabular}{llcccc}
\toprule
Dataset & Model & Modes & Width & Layers & Parameters \\
\midrule
\multirow{3}{*}{Nonlinear diffusion (all variants)}
  & FNO    & $12\times12$ & $24$ & $4$ & 335{,}065\\
  & AMOR   & $144/144$    & $24$ & $4$ & 2{,}326{,}089\\
  & PCANet & $144/144$    & $1024$ & $5$ & 4{,}494{,}480 \\
\midrule
\multirow{3}{*}{Helmholtz $(\sigma,\rho)=(0.1,0.6)$}
  & FNO    & $12\times12$ & $20$ & $3$ & 174{,}682\\
  & AMOR   & $144/144$    & $20$ & $3$ & 1{,}419{,}129\\
  & PCANet & $144/144$    & $768$ & $4$ & 1{,}993{,}872\\
\midrule
\multirow{3}{*}{Helmholtz $(\sigma,\rho)=(0.2,0.4)$}
  & FNO    & $12\times12$ & $20$ & $3$ & 174{,}682\\
  & AMOR   & $144/144$    & $20$ & $3$ & 1{,}419{,}129\\
  & PCANet & $144/144$    & $768$ & $4$ & 1{,}993{,}872\\
\midrule
\multirow{3}{*}{Helmholtz $(\sigma,\rho)=(0.4,0.2)$}
  & FNO    & $12\times12$ & $24$ & $4$ & 335{,}066\\
  & AMOR   & $144/144$    & $24$ & $4$ & 2{,}326{,}114\\
  & PCANet & $144/144$    & $1024$ & $5$ & 4{,}494{,}480\\
\midrule
\multirow{3}{*}{Kolmogorov $\mathrm{Re}=40$}
  & FNO    & $12\times12$ & $24$ & $4$ & 335{,}041\\
  & AMOR   & $144/144$    & $24$ & $4$ & 2{,}325{,}512\\
  & PCANet & $144/144$    & $1024$ & $5$ & 4{,}494{,}480\\
\midrule
\multirow{3}{*}{Kolmogorov $\mathrm{Re}=500$}
  & FNO    & $12\times12$ & $32$ & $5$ & 744{,}033\\
  & AMOR   & $144/144$    & $32$ & $5$ & 4{,}061{,}481\\
  & PCANet & $144/144$    & $1024$ & $5$ & 4{,}494{,}480\\
\midrule
\multirow{3}{*}{Kolmogorov $\mathrm{Re}=5000$}
  & FNO    & $12\times12$ & $40$ & $5$ & 1{,}162{,}361\\
  & AMOR   & $144/144$    & $40$ & $5$ & 5{,}309{,}169\\
  & PCANet & $144/144$    & $1024$ & $5$ & 4{,}494{,}480\\
\midrule
\multirow{2}{*}{FlowBench LDC 3D}
  & FNO    & $12\times12\times12$ & $24$ & $4$ & 2{,}330{,}236\\
  & AMOR   & $360/100$ & $24$ & $4$ & 15{,}916{,}314\\
\midrule
\multirow{2}{*}{Elastostatics 3D}
  & FNO    & $12\times12\times12$ & $24$ & $4$ & 2{,}330{,}139\\
  & AMOR   & $38/15$\,($99.9\%$ var.) & $24$ & $4$ & 1{,}423{,}548\\
\midrule
\multirow{1}{*}{Airfoil (Geo-FNO)}
  & AMOR   & $6/153$\,($99.9\%$ var.) & 76 & 6 & 34{,}351{,}581\\
\midrule
\multirow{1}{*}{Elasticity (Geo-FNO)}
  & AMOR   & $19/106$\,($99.9\%$ var.) & 53 & 4 & 1{,}546{,}403\\
\midrule
\multirow{1}{*}{Rib cantilever tip}
  & AMOR   & $100/30$ & $16$ & $4$ & 307{,}532\\
\multirow{1}{*}{Rib distributed aero}
  & AMOR   & $15/2$ & $24$ & $4$ & 167{,}182\\
\multirow{1}{*}{Rib multipatch bracket}
  & AMOR   & $30/5$ & $32$ & $5$ & 1{,}200{,}791\\
\multirow{1}{*}{Rib mixed}
  & AMOR   & $15/2$ & $24$ & $4$ & 167{,}182\\
\bottomrule
\end{tabular}
\ifarxiv\else
\endgroup
\fi
\end{table}

\subsection{Training Hardware}\label{si:hardware}
We always use the same computational resources for all models on each dataset, NVIDIA V100-16GB GPUs for the nonlinear diffusion and the Geo-FNO datasets, Tesla V100S-PCIE-32GB for the Helmholtz, Kolmogorov, Elastostatics, FlowBench and aircraft rib datasets.

\section{Additional Results}\label{si:additional_results}

\subsection{Mesh generalization on the nonlinear diffusion dataset}\label{si:mesh_generalization}
We train one AMOR model on the nonlinear diffusion dataset ($n_s=7000$ training samples, length scales $(\ell_m, \ell_f)=(0.3, 0.2)$).

The native training resolution is $100\times100$. The model is transferred without retraining to $50\times50$, $150\times150$, and $200\times200$ grids by interpolating the decoder modes, $B=P\Phi_1$, and using the dual encoder of Section~\ref{subsec:meshes}, which for equal weights is $E=(B^\top B)^{-1}B^\top$. Errors are relative $L^1$ errors of the denormalized fields on a separate $1000$-sample test set at each resolution (Fig.~\ref{fig:mesh_independence}). The error varies between $1.87\%$ and $2.00\%$ across the four grids and does not increase away from the training resolution.
\begin{figure}[htbp]
    \centering
    \includegraphics[width=0.7\linewidth]{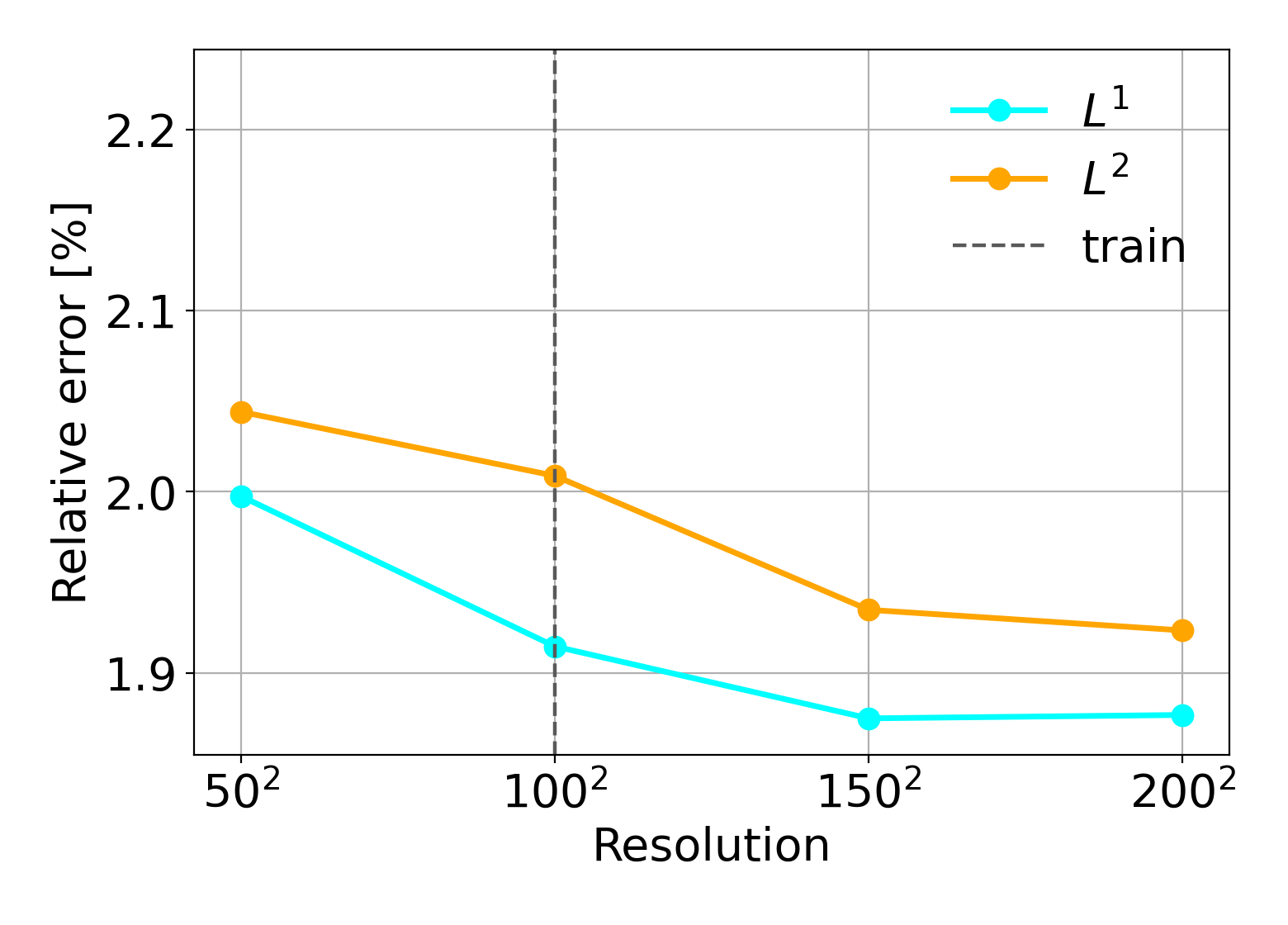}
    \caption{Mesh transfer experiment on the nonlinear dataset. Evaluation of an AMOR model trained on resolution $100\times100$ on unseen samples from the same and other resolutions. AMOR generalizes well to higher resolutions.}
    \label{fig:mesh_independence}
\end{figure}

\subsection{Visualization of Representative Model Predictions}\label{si:visualization}

We complement the quantitative benchmark results with predictions and
spatial error maps for selected test samples.

\textbf{Helmholtz.}
Figure~\ref{fig:helm_pred} compares FNO, AMOR, and PCA-Net predictions
with the reference solution for one randomly selected test sample from
each of the three Helmholtz variants.
Figure~\ref{fig:helm_err} shows pointwise relative absolute errors for
the three models, allowing their spatial error patterns to be compared.

\begin{figure}[htbp]
    \centering
    \includegraphics[width=\linewidth]{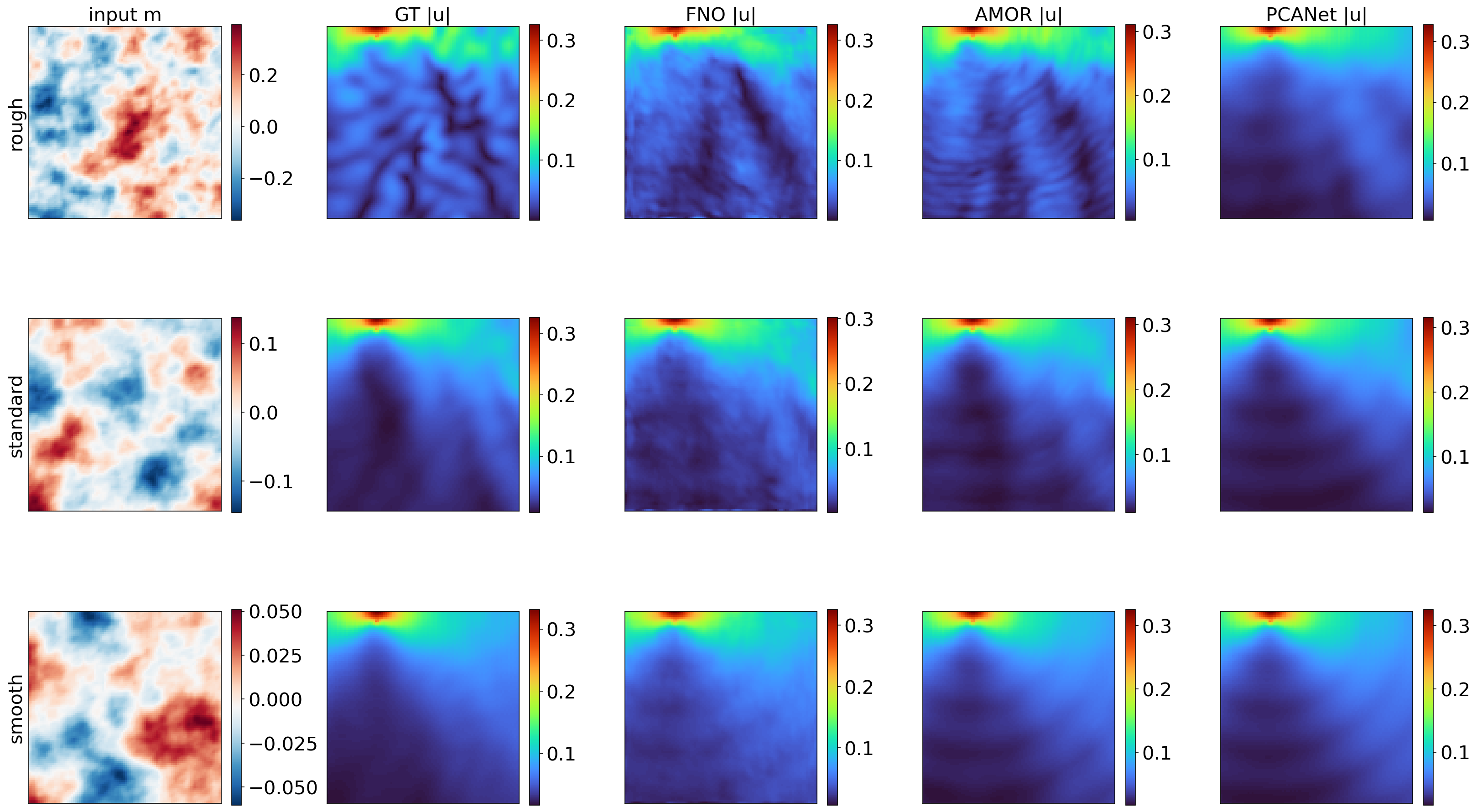}
    \caption{Predictions for one randomly selected test sample from each
    of the standard, smooth, and ultrasmooth Helmholtz datasets.
    Columns show the input, ground truth, and predictions from FNO,
    AMOR, and PCA-Net, respectively.}
    \label{fig:helm_pred}
\end{figure}

\begin{figure}[htbp]
    \centering
    \includegraphics[width=\linewidth]{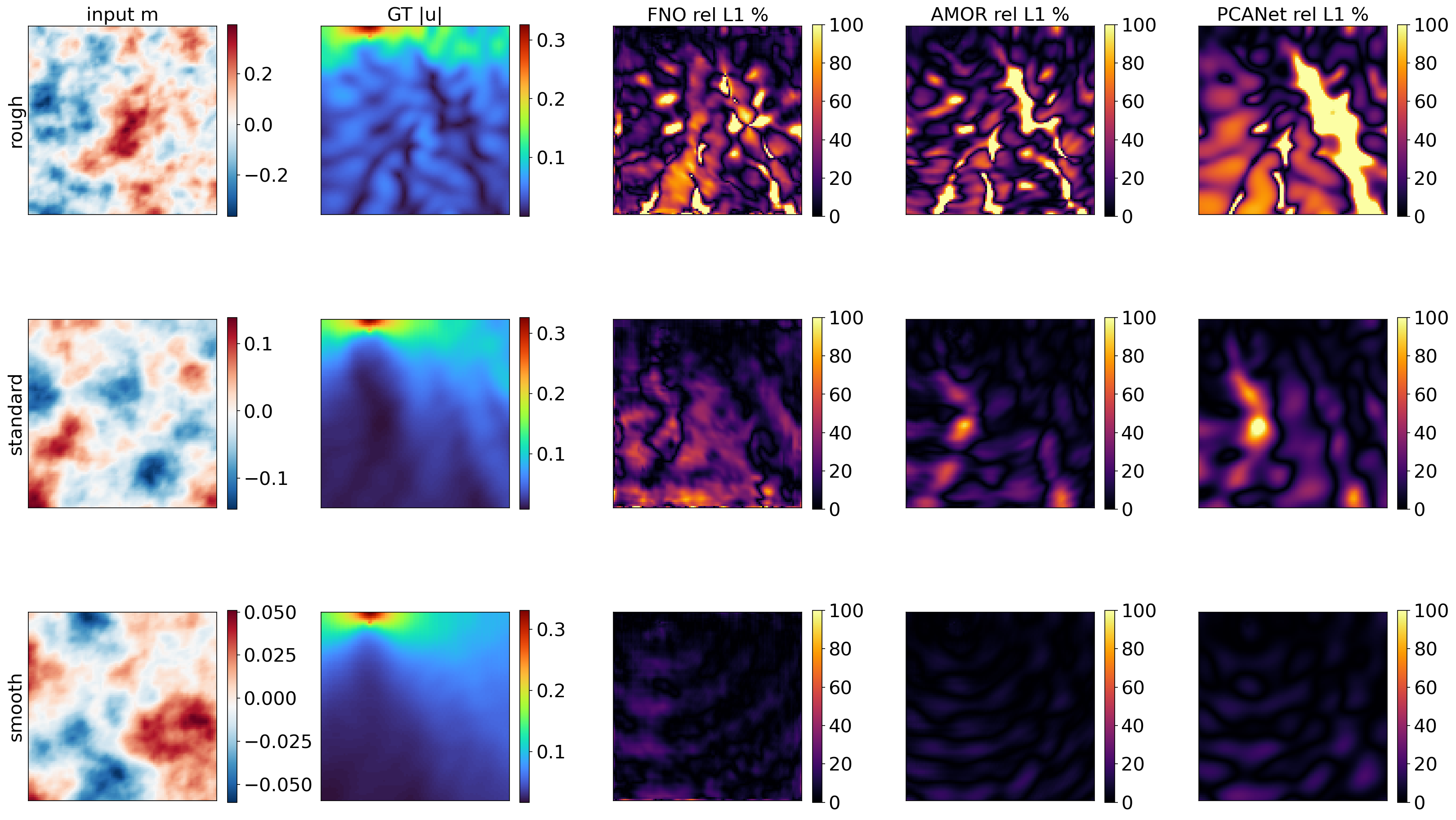}
    \caption{Pointwise relative absolute errors for one randomly selected
    test sample from each of the standard, smooth, and ultrasmooth
    Helmholtz datasets. The first two columns show the input and ground
    truth; the remaining columns show the errors of FNO, AMOR, and
    PCA-Net, respectively.}
    \label{fig:helm_err}
\end{figure}

\textbf{Kolmogorov Flow.}
Figure~\ref{fig:kolm_pred} compares the three models on one randomly
selected test sample from each Kolmogorov flow variant.
Figure~\ref{fig:kolm_err} displays their pointwise relative absolute
errors to complement the field predictions with a local view of accuracy.

\begin{figure}[htbp]
    \centering
    \includegraphics[width=\linewidth]{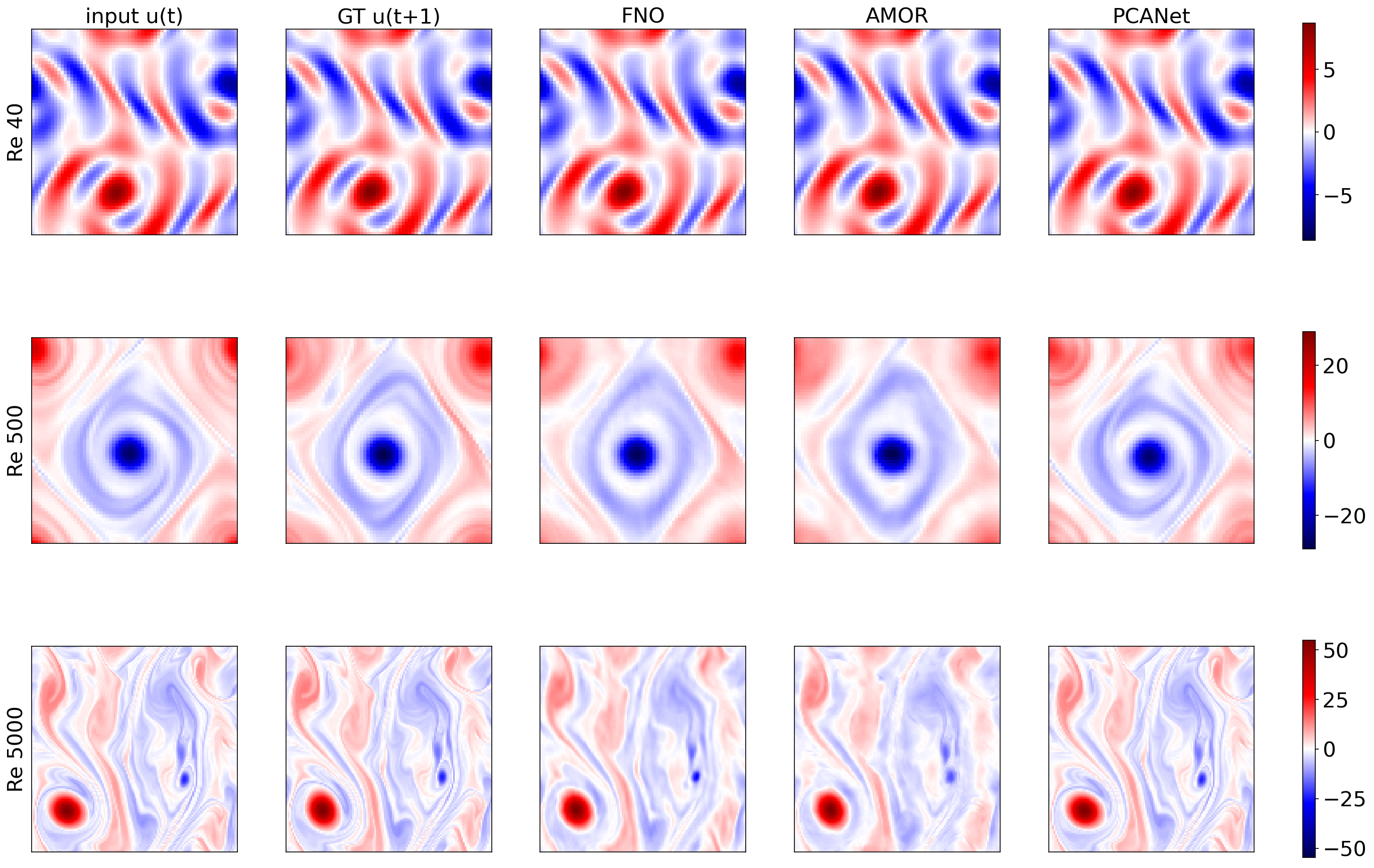}
    \caption{Predictions for one randomly selected test sample from each
    of the three Kolmogorov flow datasets. Columns show the input, ground
    truth, and predictions from FNO, AMOR, and PCA-Net, respectively.}
    \label{fig:kolm_pred}
\end{figure}

\begin{figure}[htbp]
    \centering
    \includegraphics[width=\linewidth]{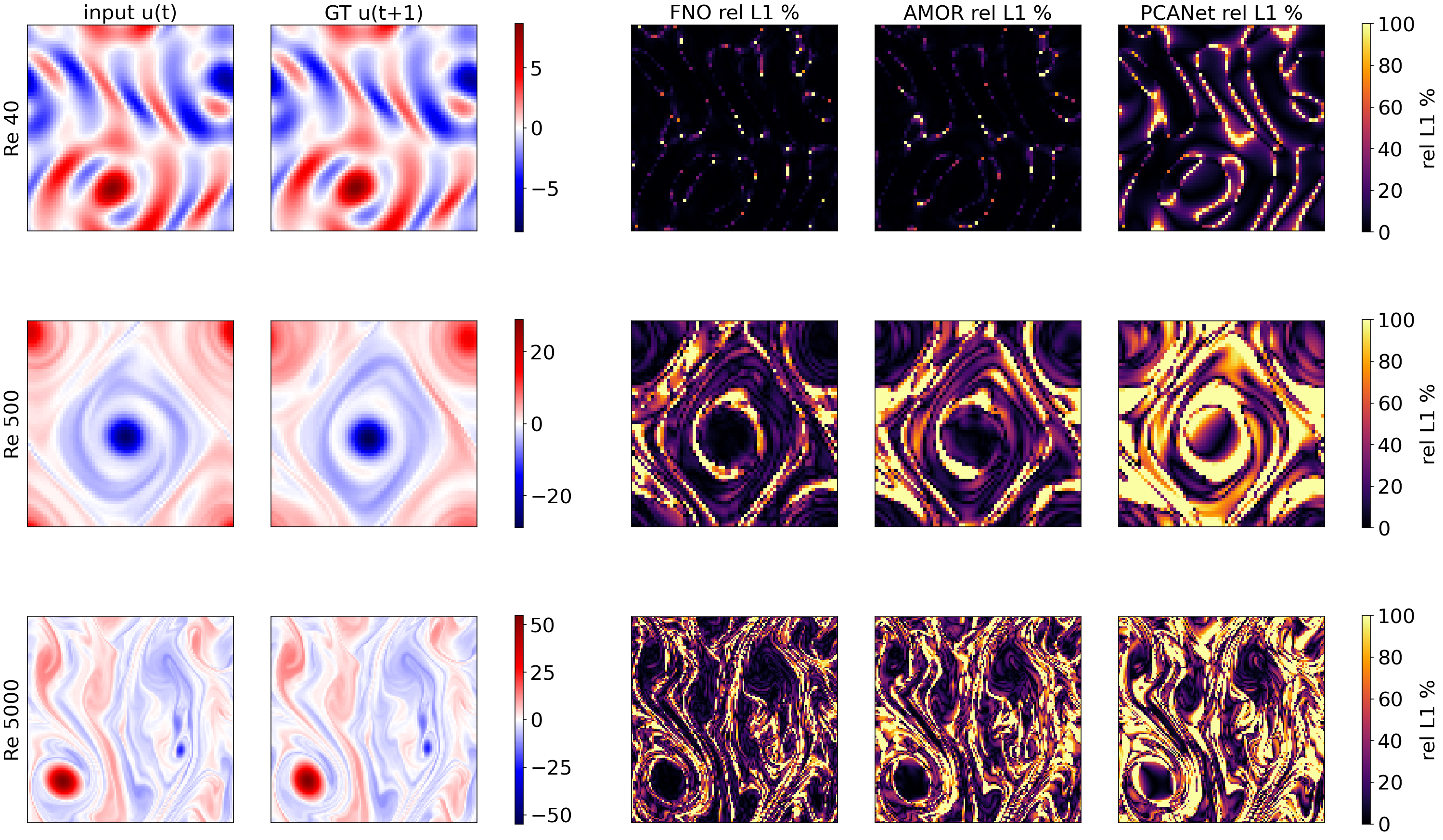}
    \caption{Pointwise relative absolute errors for one randomly selected
    test sample from each of the three Kolmogorov flow datasets.
    The first two columns show the input and ground truth; the remaining
    columns show the errors of FNO, AMOR, and PCA-Net, respectively.}
    \label{fig:kolm_err}
\end{figure}

\textbf{Airfoil and Elasticity.}
Figure~\ref{fig:airfoil_vis} compares AMOR predictions with the ground
truth and shows the pointwise relative absolute errors for two randomly
selected Airfoil test samples.
Figure~\ref{fig:elasticity_vis} provides the same comparison for two
randomly selected Elasticity test samples.

\begin{figure}[htbp]
    \centering
    \includegraphics[width=\linewidth]{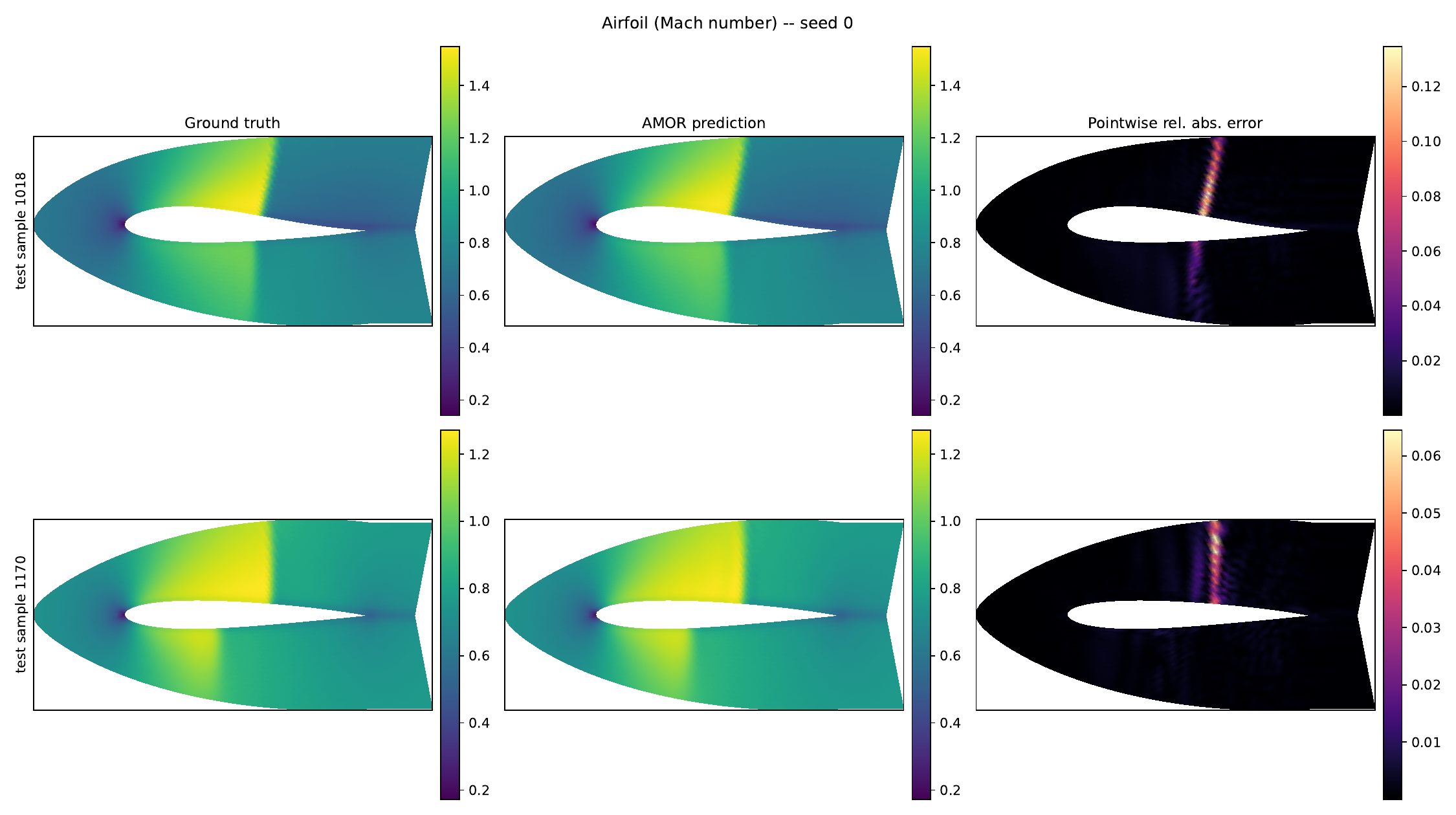}
    \caption{Ground truth, AMOR predictions, and pointwise relative
    absolute errors for two randomly selected test samples from the
    Airfoil dataset.}
    \label{fig:airfoil_vis}
\end{figure}

\begin{figure}[htbp]
    \centering
    \includegraphics[width=\linewidth]{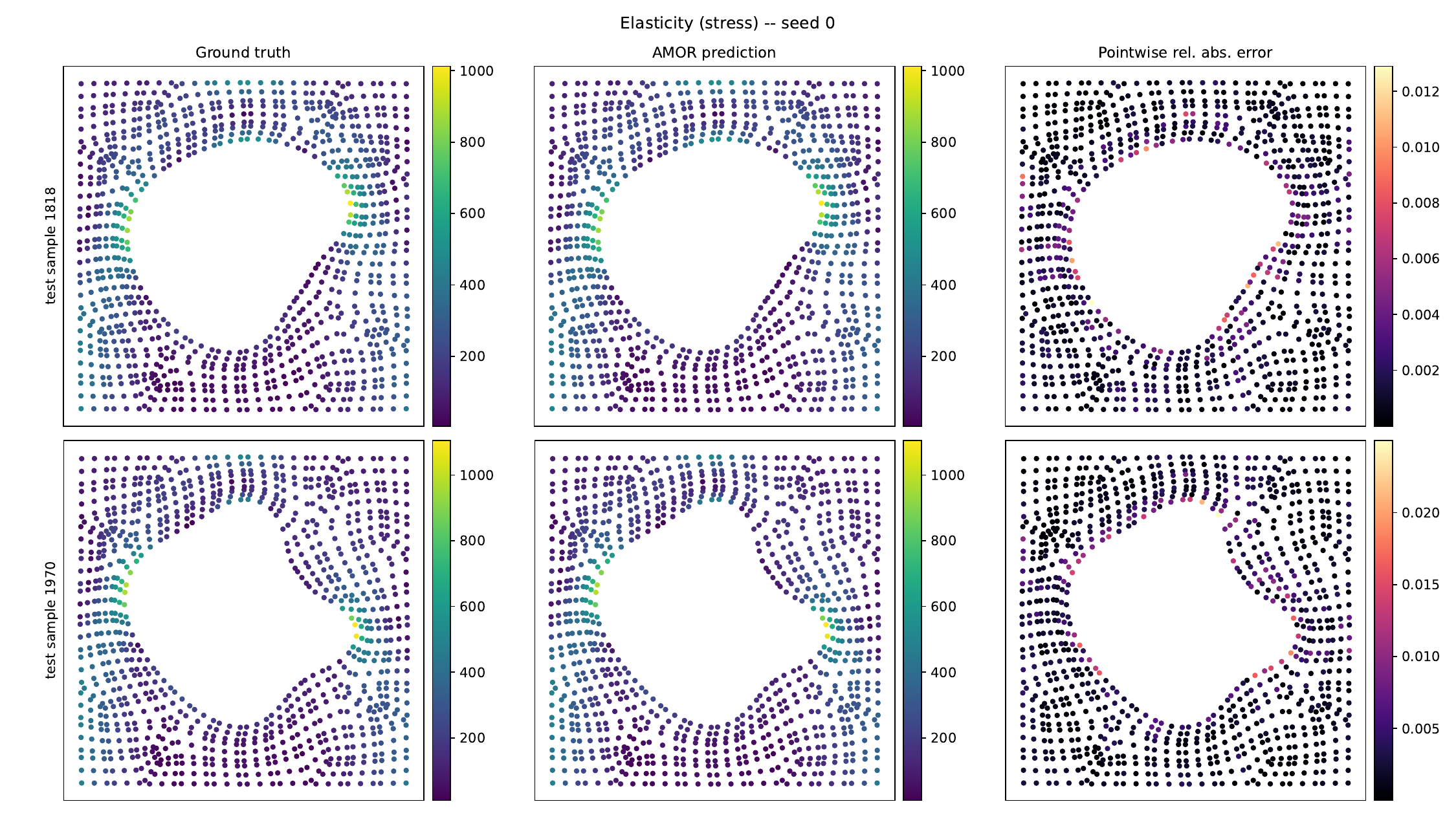}
    \caption{Ground truth, AMOR predictions, and pointwise relative
    absolute errors for two randomly selected test samples from the
    Elasticity dataset.}
    \label{fig:elasticity_vis}
\end{figure}

\subsection{Computational Scaling in Three Dimensions}\label{si:3d}

We measure inference time and training time per epoch on the Elastostatics
dataset while varying the mode counts and channel width.
All other hyperparameters and compute resources are held fixed within
each sweep.

\textbf{Inference at Matched Reconstruction Error.}
Figure~\ref{fig:scaling_matched} compares inference times with AMOR's
input and output basis sizes chosen separately to match the
Fourier-truncation reconstruction errors on the training inputs and
outputs.
Panel~(a) varies the FNO mode count per axis and the matched AMOR basis
sizes, while panel~(b) varies the channel width.

\begin{figure}[htbp]
    \centering
    \includegraphics[width=\linewidth]{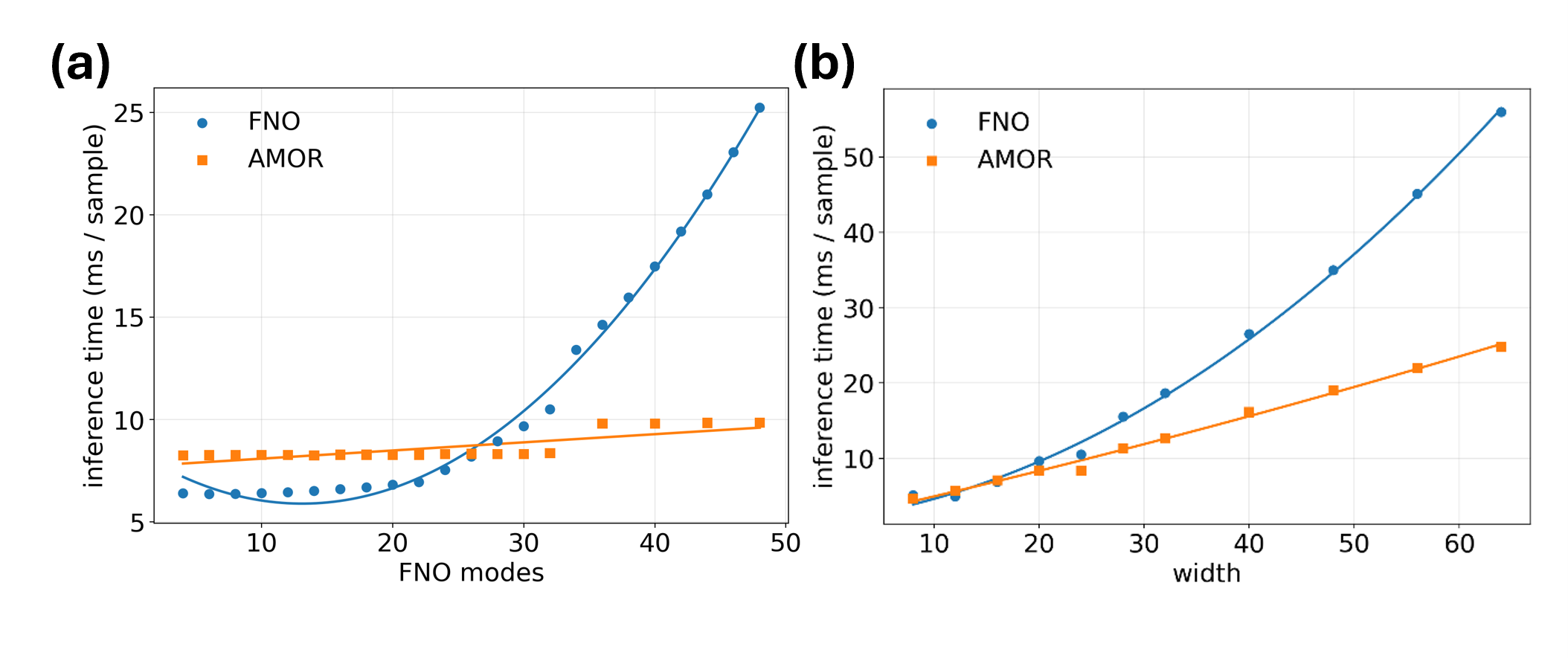}
    \caption{Inference-time scaling on the Elastostatics dataset.
    (a) FNO uses $m=m_x=m_y=m_z$ modes per axis; AMOR's input and output
    basis sizes are chosen separately to match the corresponding
    reconstruction errors on the training sets.
    (b) Channel-width sweep for FNO and AMOR.
    Other hyperparameters and compute resources are fixed within each
    sweep. Cyan and orange curves show empirical polynomial fits for
    FNO and AMOR, respectively, over the tested parameter ranges.}
    \label{fig:scaling_matched}
    \label{fig:scaling_mathced} 
\end{figure}

\textbf{Training Time.}
Figure~\ref{fig:scaling_train} reports training time per epoch as the
mode counts and channel width vary.
In the mode-count sweep, $m$ counts Fourier modes per axis for FNO,
whereas $k$ counts the basis functions in each of AMOR's input and
output bases.

\begin{figure}[htbp]
    \centering
    \includegraphics[width=\linewidth]{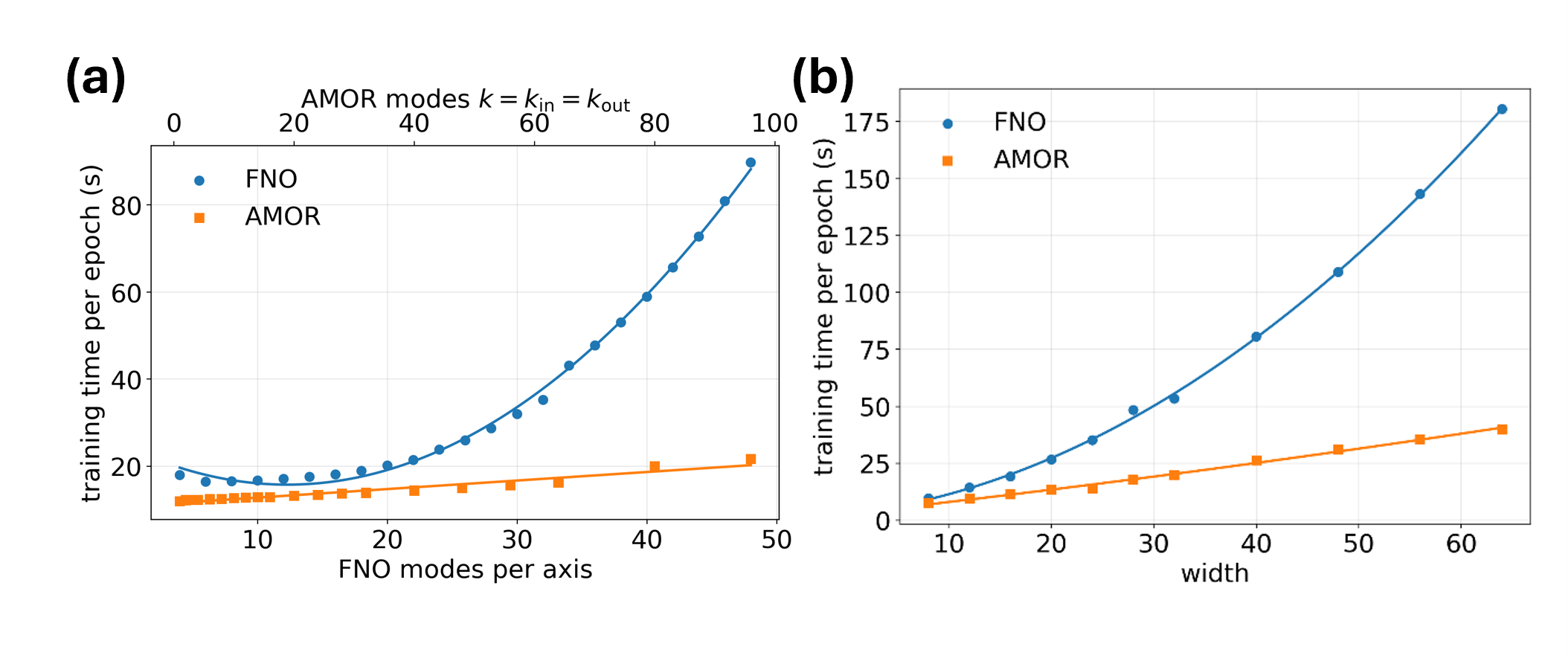}
    \caption{Training-time scaling on the Elastostatics dataset.
    (a) Mode-count sweep using $m=m_x=m_y=m_z$ for FNO and
    $k=k_{\mathrm{in}}=k_{\mathrm{out}}$ for AMOR.
    (b) Channel-width sweep for both models.
    Other hyperparameters and compute resources are fixed within each
    sweep. Cyan and orange curves show empirical polynomial fits for
    FNO and AMOR, respectively, over the tested parameter ranges.}
    \label{fig:scaling_train}
\end{figure}

\subsection{Aircraft Rib}\label{si:rib}

Figure~\ref{fig:rib_pred} shows AMOR's prediction and pointwise error compared to the ground truth on all three cases.
\begin{figure}[htbp]
    \centering
    \includegraphics[width=\linewidth]{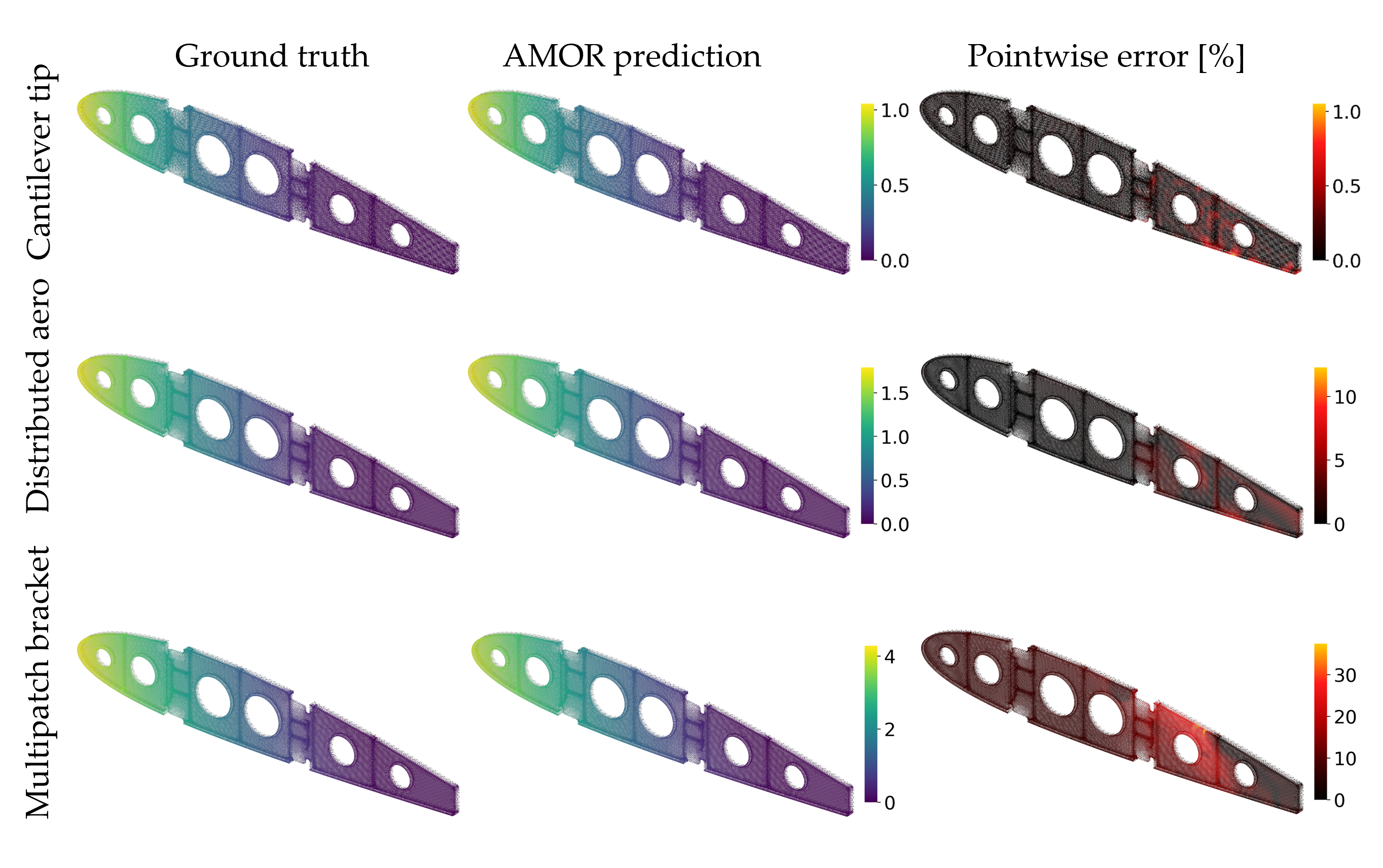}
    \caption{2D visualization of the displacement field magnitude $|u|$ ground truth compared to AMOR's prediction alongside the pointwise relative absolute error between the two for one random test split sample for each load scenario. AMOR captures the displacement field well in each case with the error increasing with the complexity of the load scenario.}
    \label{fig:rib_pred}
\end{figure}

\subsubsection{Basis comparison on cantilever tip dataset}\label{si:rib_basis}
Table~\ref{tab:rib_basis} compares the bases on the cantilever-tip dataset in the earlier tip configuration.
\begin{table}[htbp]
  \centering
  \caption{AMOR basis comparison on the aircraft-rib cantilever-tip dataset. Best results in \textbf{bold}.}
  \label{tab:rib_basis}
  \begin{tabular}{@{}lcc@{}}
    \toprule
    Basis & Rel.\ $L^1$ [\%] & Rel.\ $L^2$ [\%] \\
    \midrule
    PCA            & 0.0515 & 0.0478 \\
    Laplacian      & 0.0472 & 0.0502 \\
    Learned (Cayley) & \textbf{0.0369} & \textbf{0.0372} \\
    Learned (QR)     & 0.0529 & 0.0573 \\
    Jacobi         & 0.0767 & 0.0683 \\
    Chebyshev      & 0.0763 & 0.0676 \\
    Legendre       & 0.0786 & 0.0713 \\
    \bottomrule
  \end{tabular}
\end{table}

\subsubsection{Combined Model on mixed dataset}
A single model trained on the pooled $40\,000$-sample mixed dataset, using one shared basis and network across all three regimes, reaches an overall relative $L^1$/$L^2$ error of $6.57\%/7.34\%$.
Within each originating scenario, the pooled model is less accurate than the corresponding specialized model.
Table~\ref{tab:rib_mixed} breaks this error down by originating scenario.
\begin{table}[htbp]
\centering
\caption{Error of the single mixed-dataset model
(Table~\ref{tab:rib}, last row), broken down by originating loading
scenario.}
\label{tab:rib_mixed}
\begin{tabular}{lcc}
\toprule
Scenario (within mixed model) & Rel.\ $L^1$ (\%) & Rel.\ $L^2$ (\%) \\
\midrule
Distributed aero    & 2.27  & 2.27  \\
Multipatch bracket  & 11.93 & 13.52 \\
Cantilever tip      & 0.57  & 0.53  \\
\bottomrule
\end{tabular}
\end{table}
Relative to the scenario-specific models, error increases for all three cases under joint training, most sharply for tip loading ($0.06\%\to0.57\%$) and more moderately for aero ($1.17\%\to2.27\%$) and multipatch ($10.40\%\to11.93\%$). 
For the cantilever tip loading scenario these numbers are not directly comparable however as in the combined dataset the mapping is different and includes the load as an input field.
This is consistent with tip loading benefiting most from a large, scenario-tailored mass-weighted-PCA basis ($k{=}100/30$) that a single, more compact shared basis cannot fully reproduce, while multipatch loading -- already the dominant error source -- is comparatively less affected by joint training. 
Overall, a single AMOR model generalizes across the three physically distinct loading conditions on this mesh, at a small accuracy cost relative to specialized per-scenario models, with multipatch bracket loading remaining the main bottleneck in both settings.

\subsubsection{Mesh transfer (cantilever tip)}\label{si:rib_mesh_transfer}
We transfer the tip AMOR model trained on the coarse mesh (\texttt{h15}; \(k_{\mathrm{in}}{=}100\), \(k_{\mathrm{out}}{=}30\), mass-weighted PCA) to finer P1 meshes \texttt{h11}, \texttt{h7}, and \texttt{h5} by prolonging the coarse mass-PCA modes with the same P1 operator \(P\) used for nodal fields and keeping the network weights frozen.
The tip model uses four blocks of width $16$ with factorized coefficient tensors and shared bases ($3.1\times10^5$ parameters), and its bases are fitted in the consistent P1 mass inner product.
Evaluation uses \(100\) paired easy-GP samples with a common random seed, a shared geometric bounding box, and a fixed tip load (\(F{=}194.17\)).
The reference on each target mesh \(\mathcal{T}\) is the native fine-mesh FEM displacement \(u_{\mathcal{T}}\).

As a solver baseline on the same pairs we prolong the coarse FEM solution, \(P u_{15}\), and compare it to \(u_{\mathcal{T}}\) in the same relative \(L^1\)/\(L^2\) norms.
This baseline isolates the change of discrete space (and load representation) between mesh sizes; it is not a comparison against a single finest-mesh reference as in a classical mesh-convergence study.
\begin{table}[htbp]
\centering
\caption{Coarse-to-fine transfer on the cantilever-tip rib (relative
errors [\%], mean over \(100\) paired samples).
FEM: \(P u_{15}\) vs.\ native \(u_{\mathcal{T}}\).
AMOR: prolonged mass-PCA decoder with dual encoder, frozen \(\theta\).}
\label{tab:rib_mesh_transfer}
\begin{tabular}{@{}lcccc@{}}
\toprule
Mesh &
\multicolumn{2}{c}{FEM \(P u_{15}\)} &
\multicolumn{2}{c}{AMOR (dual)} \\
\cmidrule(lr){2-3} \cmidrule(lr){4-5}
& Rel.\ \(L^1\) & Rel.\ \(L^2\) & Rel.\ \(L^1\) & Rel.\ \(L^2\) \\
\midrule
\texttt{h11} & 9.79 & 10.11 & 9.45 & 9.64 \\
\texttt{h7}  & 16.34 & 17.07 & 12.03 & 10.70 \\
\texttt{h5}  & 17.16 & 17.91 & 15.02 & 14.34 \\
\bottomrule
\end{tabular}
\end{table}
Both errors increase from \texttt{h11} to \texttt{h5}, consistent with a growing coarse--fine discrepancy.
AMOR improves on the coarse-solution prolongation baseline on all three meshes, by $0.3$--$4.3$ percentage points in relative $L^1$ error.

\end{document}